\documentclass[a4paper,10pt]{amsart}

\usepackage[utf8]{inputenc}
\usepackage[english]{babel}

\usepackage[top=35mm,bottom=35mm,left=35mm,right=35mm]{geometry}

\numberwithin{equation}{section}
\setlength\parindent{11pt}
\setlength\parskip{4pt}

\usepackage{amsmath,amsthm,amsfonts,amssymb}
\usepackage{euscript}
\usepackage{mathtools}
\mathtoolsset{showonlyrefs}

\usepackage[
   pdfstartview=FitH,
   pdfpagemode=UseNone,
]{hyperref}
\urlstyle{sf}

\usepackage[textsize=footnotesize,color=orange!30]{todonotes}
\setlength{\marginparwidth}{29mm}

\theoremstyle{plain}
\newtheorem{theorem}{Theorem}[section]
\newtheorem{thm}[theorem]{Theorem}
\newtheorem{lemma}[theorem]{Lemma}
\newtheorem{lem}[theorem]{Lemma}
\newtheorem{corollary}[theorem]{Corollary}
\newtheorem{proposition}[theorem]{Proposition}

\newtheorem*{rem*}{Remark}

\theoremstyle{definition}

\newtheorem{remark}[theorem]{Remark}

\def\MR#1{\href{http://www.ams.org/mathscinet-getitem?mr=#1}{MR#1}}

\def\RR{\mathbb{R}}

\def\CC{\mathbb{C}}

\def\Ocal{\mathcal{O}}

\def\eps{\epsilon}

\def\sym{\mathrm{sym}}
\def\Cl{\mathrm{Cl}}

\newcommand{\tmop}[1]{\ensuremath{\operatorname{#1}}}

\def \ck {{\xi}} 
\def \e {{\varepsilon}}
\def \a {{\mathfrak{a}}}

\def \R {{\mathbb R}}
\def \H {{\mathbb H}}

\def \Q {{\mathbb Q}}

\def \C {{\mathbb C}}
\def \g {\gamma}
\def \G {\Gamma}
\def \Z {\mathbb{Z}}

\def \slz  {{\mathrm{SL}_2( {\mathbb Z})} }
\def \GmodH {{\Gamma\backslash\mathbb H}}

\def \supp {{\rm supp\,} }

\DeclareMathOperator{\arccosh}{arcosh}

\newcommand{\vol}[1]{\mathrm{vol}\left( #1 \right)}
\newcommand{\abs}[1]{\left\lvert #1 \right\rvert}
\newcommand{\norm}[1]{\left\lVert #1 \right\rVert}
\newcommand{\Mod}[1]{\ (\textup{mod}\ #1)}

\providecommand{\sym}{\operatorname{sym}}

\title{The hyperbolic circle problem over Heegner points}

\author[Chatzakos]{Dimitrios Chatzakos}
\address{
   Department of Mathematics,
   University of Patras,
   26 504, Patras,
   Greece
}
\email{dchatzakos@math.upatras.gr}

\author[Cherubini]{Giacomo Cherubini}
\address{
   Istituto Nazionale di Alta Matematica ``Francesco Severi'',
   Research Unit Department of Mathematics ``Guido Castelnuovo'',
   Sapienza University of Rome,
   Piazzale Aldo Moro 5, I-00185, Rome,
   Italy
}
\email{cherubini@altamatematica.it}

\author[Lester]{Stephen Lester}
\address{
   Department of Mathematics \\
   King's College London \\
   London WC2R 2LS \\
   UK
}
\email{steve.lester@kcl.ac.uk}

\author[Risager]{Morten S. Risager}
\address{
   Department of Mathematical Sciences,
   University of Copenhagen,
   Universitets\-parken 5, 2100
   Copenhagen \O, Denmark
}
\email{risager@math.ku.dk}

\date{\today}
\subjclass[2020]{Primary
   11P21  
   11N45  
   11F67; 
   Secondary
   11E45, 
   11M32} 

\keywords{}

\begin{document}

\begin{abstract}
  For the full modular group, we obtain a logarithmic improvement  on Selberg's long-standing bound
  for the error term of the counting function
  in the hyperbolic circle problem
  over Heegner points of different discriminants.
  The main ingredients in our method are Waldspurger's formula,
  twisted first moments of certain Rankin-Selberg convolutions,
  and a new fractional moment estimate.
\end{abstract}

\maketitle

\section{Introduction}

\subsection{Background}
In this paper we study the hyperbolic
lattice point counting problem
for the modular group $\Gamma = \slz$.
Let $\mathbb{H}$ be the hyperbolic plane. For two fixed points $z$, $w$ in $\mathbb{H}$ we denote by $d_\H(z,w)$ their hyperbolic distance.
The hyperbolic
circle problem
asks to estimate the asymptotic growth of the counting function
\begin{equation}
  N(X;z, w)  = \# \{ \gamma \in \Gamma : 4u( z, \gamma w) + 2 \leq X \}
\end{equation}
as $X \to \infty$, where the point-pair invariant function
\begin{equation}
  u(z,w) = \frac{|z-w|^2}{4 \Im(z) \Im(w)}
\end{equation}
is related to the hyperbolic distance by the relation $\cosh d_\H(z,w) = 2 u(z,w) +1$.
We refer to \cite{PhillipsRudnick:1994, Chamizo:1996a, Iwaniec:2002}
for the very rich history of this problem.
The main theorem, proved first by Selberg \cite{Selberg:1977},
states that
\begin{equation}\label{selbergbound}
  N(X;z, w)  = \frac{2 \pi}{\hbox{vol}(\GmodH)} X + O(X^{2/3}),
\end{equation}
where the implied constant may depend on $z,w,$ and $\Gamma$.
The same estimate holds for any cofinite Fuchsian group $\Gamma$, if one adjusts the main term when there are small eigenvalues
(see \cite[Theorem 12.1]{Iwaniec:2002}, \cite[Theorem 4]{Good:1983});  similar estimates were recently found for higher rank groups by Blomer and Lutsko \cite{BlomerLutsko:2024}.
The error term $O (X^{2/3})$ has never been improved
for any cofinite group $\Gamma$ or for any pair of points $z, w$,
although the main conjecture predicts that for any $\varepsilon>0$
\begin{equation} \label{mainconjecture}
  N(X;z, w)  = \frac{2 \pi}{\hbox{vol}(\GmodH)} X + O(X^{1/2+\varepsilon}).
\end{equation}
This conjecture is supported by second moment estimates of Chamizo \cite{Chamizo:1996a}, and by the work of Phillips and Rudnick \cite{PhillipsRudnick:1994}, who proved mean value and $\Omega$-results for the error term. We also refer to \cite{Chatzakos:2017, Cherubini:2018, CherubiniRisager:2018, PetridisRisager:2018a} for further results supporting the validity of the conjecture \eqref{mainconjecture}.

\subsection{Main results and strategy of proof}
We focus on the case of the modular group. Our main result gives the first unconditional improvement of Selberg's bound \eqref{selbergbound} in the case where $z,w$ are Heegner points of different discriminants (see Section \ref{sec:heegner}).
\begin{thm} \label{thm:mainresult}
  Let $\G=\slz$ be the modular group and $z_d,z_{d'}\in \H$ be  Heegner points of different squarefree discriminants $d,d'<0$ respectively. Then
  \begin{equation}
    N(X;z_d, z_{d'})  = \frac{ 2 \pi}{\vol{\GmodH}} X + O\left(\frac{X^{2/3}}{(\log X)^{1/6} }\right).
  \end{equation}
\end{thm}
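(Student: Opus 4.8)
The plan is to run the spectral method for the hyperbolic circle problem and to feed into it an arithmetically informed estimate for the resulting spectral sum. First I would apply the Selberg pre-trace formula to a pair of radial test functions $k_\pm$ that bracket the characteristic function $\mathbf 1\{\,u(z_d,\gamma z_{d'})\le (X-2)/4\,\}$ from above and below, with transition ranges of multiplicative width $1/T$, where $T$ is a parameter to be optimised. Expanding spectrally — and using that $\Gamma=\slz$ has no exceptional eigenvalues, so that the constant eigenfunction produces exactly the main term $\tfrac{2\pi}{\vol{\GmodH}}X$, while the continuous spectrum contributes only $O(X^{1/2+\eps})$ — one is led to
\[
  N(X;z_d,z_{d'}) = \frac{2\pi}{\vol{\GmodH}}\,X + O\!\left( X^{1/2}\!\!\sum_{|t_j|\le T}\frac{|\varphi_j(z_d)\varphi_j(z_{d'})|}{(1+|t_j|)^{3/2}} + \frac{X}{T} + X^{\eps}\right),
\]
the weight $(1+|t_j|)^{-3/2}$ arising from the gamma ratio $\Gamma(s_j-\tfrac12)/\Gamma(s_j+1)$ at $s_j=\tfrac12+it_j$. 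The trivial bound — local Weyl law $\sum_{|t_j|\sim N}|\varphi_j(z)|^2\ll N^2$ and Cauchy–Schwarz — makes the spectral sum $O(T^{1/2})$, and $T=X^{1/3}$ recovers $O(X^{2/3})$. The whole game is to save a power of $\log$ in the spectral sum for $N$ up to about $X^{1/3}$: I claim it is enough to prove, dyadically,
\[
  \sum_{|t_j|\sim N}|\varphi_j(z_d)\varphi_j(z_{d'})|\ \ll\ \frac{N^2}{(\log N)^{1/4}},
\]
since then re-optimising $T$ to $X^{1/3}(\log X)^{1/6}$ gives $N(X;z_d,z_{d'})=\tfrac{2\pi}{\vol{\GmodH}}X+O\!\big(X^{2/3}(\log X)^{-1/6}\big)$.

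To reach the arithmetic I would exploit that $z_d$ and $z_{d'}$ are Heegner points. For a Hecke–Maass cusp form $\varphi_j$, express $\varphi_j(z_d)$ through the periods over the $\Cl(d)$-orbit twisted by class-group characters, $P_{d,\chi}(\varphi_j)=\sum_{A\in\Cl(d)}\chi(A)\varphi_j(z_A)$ with $\chi\in\widehat{\Cl(d)}$; Plancherel on $\Cl(d)$ gives $|\varphi_j(z_d)|^2\le\sum_A|\varphi_j(z_A)|^2=h(d)^{-1}\sum_\chi|P_{d,\chi}(\varphi_j)|^2$, and Waldspurger's formula identifies $|P_{d,\chi}(\varphi_j)|^2$ with the central value $L(\tfrac12,\varphi_j\times\Theta_{d,\chi})$ of the Rankin–Selberg convolution of $\varphi_j$ with the CM theta form $\Theta_{d,\chi}$, divided by $L(1,\sym^2\varphi_j)$ and up to explicit harmless factors. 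Writing $\mathcal A_j=\sum_\chi L(\tfrac12,\varphi_j\times\Theta_{d,\chi})\ge 0$ and $\mathcal B_j$ for its $d'$-analogue, one gets $\sum_{|t_j|\sim N}|\varphi_j(z_d)\varphi_j(z_{d'})|\ll_{d,d'}\sum_{|t_j|\sim N}L(1,\sym^2\varphi_j)^{-1}\sqrt{\mathcal A_j\mathcal B_j}$. In this language the diagonal $\sum_{|t_j|\sim N}|\varphi_j(z_d)|^2$ is, up to constants, a first moment of these Rankin–Selberg central values, with main term $\asymp N^2$, which confirms both the Weyl-law size and the fact that the elementary bound $\sqrt{\mathcal A_j\mathcal B_j}\le\tfrac12(\mathcal A_j+\mathcal B_j)$ loses nothing by itself.

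The decisive step is therefore to beat this by $(\log N)^{1/4}$, and this is the rôle of the fractional moment estimate. Because $d\ne d'$, the two families of central values $\{L(\tfrac12,\varphi_j\times\Theta_{d,\chi})\}_\chi$ and $\{L(\tfrac12,\varphi_j\times\Theta_{d',\chi'})\}_{\chi'}$ are built from independent quadratic data, so over the spectral family $\{t_j\sim N\}$ the two factors $\mathcal A_j$, $\mathcal B_j$ are rarely simultaneously large; this is precisely the mechanism by which the first moment of $L^{1/2}$ over a family is strictly smaller than the square root of the first moment of $L$ — a Jensen-type deficit of the order of a power of $\log$. Quantitatively I would (i) establish the \emph{twisted} first moments $\sum_{|t_j|\sim N}\lambda_j(\ell)\,L(\tfrac12,\varphi_j\times\Theta_{d,\chi})\,\omega(t_j/N)$ for short $\ell$ and smooth $\omega$, via an approximate functional equation, the Kuznetsov (or Petersson) formula and the spectral large sieve, getting main terms $\asymp N^2$ with a power saving in $\ell$; (ii) use these, together with a short Dirichlet-polynomial (resonator/mollifier-type) model for $\mathcal A_j^{1/2}$ and $\mathcal B_j^{1/2}$ and a Hölder split with a fractional exponent and a bounded negative moment of $L(1,\sym^2\varphi_j)$, to bound $\sum_{|t_j|\sim N}L(1,\sym^2\varphi_j)^{-1}\sqrt{\mathcal A_j\mathcal B_j}$ by $N^2(\log N)^{-1/4}$ — an unconditional version of (part of) the conjectural saving.

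The main obstacle I anticipate is step (ii): converting the soft inequality $\sqrt{\mathcal A_j\mathcal B_j}\le\tfrac12(\mathcal A_j+\mathcal B_j)$ into a genuine power-of-$\log$ gain. This needs, at the same time, a fractional-moment \emph{upper} bound for this degree-four spectral family in the $t_j$-aspect — in the spirit of the Radziwi\l\l--Soundararajan / Heap--Soundararajan unconditional moment bounds, adapted to a Rankin–Selberg family with a fixed CM factor — \emph{and} an honest lower bound (or exact main term) for the diagonal, so that the Cauchy–Schwarz deficit is quantified rather than merely estimated. Achieving this with full uniformity in the dyadic parameter $N$ up to $X^{1/3}$ and in $\chi,\chi',d,d'$, while keeping the off-diagonal (Kloosterman–Salié) contributions to the twisted first moments well below the main term — bearing in mind that $L(s,\varphi_j\times\Theta_{d,\chi})$ has conductor of size $\asymp t_j^4|d|^2$ — and without squandering the saving on the $L(1,\sym^2)$-weights or on the character sums over $\Cl(d)$, is where the real work lies.
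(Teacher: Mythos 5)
Your proposal follows essentially the same route as the paper: reduce via the pre-trace formula (with bracketing kernels of transition width $1/T$ and the optimisation $T=X^{1/3}(\log X)^{1/6}$) to the dyadic spectral bound $\sum_{t_j\sim N}|\phi_j(z_d)\phi_j(z_{d'})|\ll N^2(\log N)^{-1/4}$, and then obtain that bound from Waldspurger's formula together with a fractional moment estimate for $L(\tfrac12,\phi_j\times f_\ck)^{1/2}L(\tfrac12,\phi_j\times f_{\ck'})^{1/2}/L(1,\sym^2\phi_j)$ proved by twisted first moments and a Radziwi{\l}{\l}--Soundararajan-type mollification. This is exactly the paper's argument (Proposition \ref{lem:local-bound-to-counting-bound}, the inequality \eqref{eq:phibd}, and Theorem \ref{thm:fracmoments}), so the outline is correct, with only the details of the fractional-moment step left to be carried out.
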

In order to explain the strategy of the proof, we recall some notation. Let $-\Delta$ be the (positive) automorphic Laplacian of the modular surface $\GmodH$
and let $\{ \phi_j \}_{j=0}^{\infty}$
denote an orthonormal basis for the cuspidal spectrum of $-\Delta$ with eigenvalues $ \{\lambda_j \}_{j=0}^{\infty}$.
As usual,
we write the non-zero eigenvalues as $\lambda_j = 1/4 +t_j^2$ with $t_j > 0$.
Since we restrict ourselves to the modular group there are no small eigenvalues $\lambda_j \in (0,1/4]$.

Bounding the error term in the hyperbolic circle problem is closely related to bounding the spectral exponential sum given by
\begin{equation}
  \mathfrak{S} (T,X) := \sum_{t_j \leq T} X^{it_j} \phi_j(z) \overline{\phi_j(w)}.
\end{equation}
Using Cauchy--Schwarz and the local Weyl law
(see \cite{Hormander:1968}, \cite[Theorem 7.2]{Iwaniec:2002}) we get the \lq trivial\rq{}  bound $\mathfrak{S} (T,X)\ll T^2$. The same bound holds if we replace the modular group by any
cofinite
$\G$ and any $z,w\in \H$. This trivial bound  suffices to prove Selberg's bound \eqref{selbergbound} in this generality. The difficulty in improving \eqref{selbergbound} largely arises from the inability to improve on the trivial bound on $\mathfrak{S} (T,X)$ in the relevant ranges.

In contrast, when $z\neq w$ are different points and $X=1$, a well-known bound of H\"ormander's  (see \cite{Hormander:1968} for cocompact groups and \cite{Chamizo:1996} for cofinite groups) gives that in this case the spectral exponential sum is significantly smaller
and we have
\begin{equation} \label{spectralfunction}
  \sum_{t_j \leq T}  \phi_j(z) \overline{\phi_j(w)} \ll T^{1+\varepsilon}.
\end{equation}
In fact, H\"ormander's bound holds for Laplace eigenfunctions over any compact Riemannian manifold. The study of the correct order of growth of the spectral function \eqref{spectralfunction} has attracted a significant amount of interest among geometric analysts.
It is possible that the true order may be as small as $T^{1/2+\varepsilon}$, at least under some restrictions on $z$, $w$, see \cite{CanzaniHanin:2015, JakobsonPolterovich:2007, LapointePolterovichSafarov:2009}.
A key step  in proving Theorem \ref{thm:mainresult} gives a bound for a related spectral sum,
namely
establishing the bound
\begin{equation}\label{eq:crucial-bound}
  \sum_{t_j \leq T}  \abs{\phi_j(z)\phi_j(w)}
  \ll \frac{T^2}{(\log T)^{1/4}}
\end{equation}
for the full modular group when $z =z_d, w=z_{d'}$ are Heegner points of \emph{different} discriminants and $\{\phi_j\}_{j=0}^{\infty}$ consists of Hecke--Maass cusp forms. This bound exploits the underlying arithmetic at the Heegner points and is stronger than what Berry's random wave model suggests should hold generically if one assumes that $\phi_j(z)$ and $\phi_j(w)$ are independent for $t_j\leq T$.
A similar phenomenon occurs
in the sup-norm problem, where Berry's model predicts that the sup-norm of $\phi_j$ should be $\ll \sqrt{\log \lambda_j}$ whereas Mili{\'c}evi{\'c} \cite[Theorem 1]{Milicevic:2010} proved for each Heegner point $z_d$ there exists a subsequence of Laplace eigenfunctions with $|\phi_j(z_d)|
  \gg \exp(\sqrt{ \frac{\log \lambda_j}{\log \log \lambda_j}}(1-o(1)))$.

In order to prove \eqref{eq:crucial-bound} we use Waldspurger's formula to express $\abs{\phi_j(z)}$ through $L$-functions of
Rankin--Selberg convolutions.
This gives expressions which may be approached using techniques from analytic number theory. What we then prove is a new fractional moment estimate for twisted $L$-functions of Hecke--Maass forms which may be of independent interest. To state it we let $f_\ck$ be the  theta series associated to a class group character, $\ck\in \widehat{\Cl}_{K}$, of an imaginary quadratic field $K$ (see Section \ref{sec:theta-series}).

\begin{thm} \label{thm:fracmoments}
  Let $d,d'\equiv 1 \bmod 4$ be distinct negative, squarefree integers.
  Let $\ck \in \widehat{\Cl}_{\mathbb Q(\sqrt{d})}$
  and $\ck' \in \widehat{\Cl}_{\mathbb Q(\sqrt{d'})}$. Then
  \begin{equation} \label{eq:fractional-moment-bd}
    \sum_{T < t_j \le 2T} \frac{L(\tfrac12,\phi_j \times f_{\ck})^{\tfrac 1 2} L(\tfrac12,\phi_j \times f_{\ck'})^{\tfrac 1 2}}{L(1,\tmop{sym}^2 \phi_j)}  \ll \frac{T^2 }{(\log T)^{1/4}},
  \end{equation}
  where the implied constant depends at most on $d$ and $d'$ and $\{\phi_j\}_{j=0}^{\infty}$ consists of Hecke-Maass cusp forms.
\end{thm}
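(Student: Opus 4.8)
The plan is to combine an approximate functional equation for each twisted $L$-function with Hölder's inequality to reduce the fractional moment to a manageable mixed moment, then to evaluate the resulting Dirichlet polynomial averages using a spectral large sieve / Kuznetsov-type input. First I would write, for $T < t_j \le 2T$, an approximate functional equation expressing $L(\tfrac12,\phi_j\times f_\ck)$ as a Dirichlet series of length roughly $t_j^2$ (the analytic conductor of the degree-$4$ Rankin--Selberg $L$-function $\phi_j\times f_\ck$ over $\mathbb{Q}$ is of size $\asymp t_j^2\cdot|d|^{O(1)}$), with coefficients $\lambda_j(n)\rho_\ck(n)/\sqrt n$ where $\lambda_j(n)$ are the Hecke eigenvalues of $\phi_j$ and $\rho_\ck$ are the coefficients of the theta series $f_\ck$. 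Since we only want a $\tfrac12$-power, I would pull out $L(\tfrac12,\phi_j\times f_\ck)^{1/2}$ by Cauchy--Schwarz against $1$, at the cost of a factor $T^2$ coming from $\sum_{t_j\le 2T}1$; this reduces \eqref{eq:fractional-moment-bd} to showing
\begin{equation}
  \sum_{T<t_j\le 2T}\frac{L(\tfrac12,\phi_j\times f_\ck)L(\tfrac12,\phi_j\times f_{\ck'})}{L(1,\sym^2\phi_j)^2}\ll \frac{T^2}{(\log T)^{1/2}},
\end{equation}
i.e.\ a first moment of a product of two degree-$4$ $L$-functions, normalized by $L(1,\sym^2\phi_j)^2$. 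The normalizing factor $1/L(1,\sym^2\phi_j)$ is precisely the weight that turns the spectral sum into a clean Kuznetsov/Petersson average (it is the Fourier coefficient normalization in Waldspurger's formula), so I would fold one copy into each $L$-value and view the whole expression as an average of $\lambda_j(m)\lambda_j(n)$ against the Kuznetsov kernel.

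The second step is the arithmetic heart: after opening both approximate functional equations and applying the Kuznetsov trace formula, the diagonal $m=n$ contributes the main term of size $\asymp T^2\log T$ (from $\sum_{mn\le T^2}$ type ranges weighted by $\rho_\ck(m)\rho_{\ck'}(n)$), while the off-diagonal is lower order. The key observation — this is where the hypothesis $d\ne d'$ enters decisively — is that $\rho_\ck$ is supported on norms of ideals in $\mathbb{Q}(\sqrt d)$ and $\rho_{\ck'}$ on norms of ideals in $\mathbb{Q}(\sqrt{d'})$, so the ``diagonal'' $m=n$ forces $n$ to be simultaneously a norm from two distinct quadratic fields. The density of such integers up to $Y$ is $\ll Y/(\log Y)^{1/2}$ (each condition individually gives density $\asymp (\log Y)^{-1/2}$, and for distinct fundamental discriminants the two conditions are essentially independent in the sense of Landau/Selberg--Delange, so we gain a further $(\log Y)^{-1/2}$ beyond a single norm condition — the relevant statement is that the number of $n\le Y$ that are norms from both $\mathbb{Q}(\sqrt d)$ and $\mathbb{Q}(\sqrt{d'})$ is $\ll Y/(\log Y)^{1/2}$). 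Summed against $1/\sqrt{n}$ over the dyadic range $n\asymp T^2$, this is exactly the source of the saving $(\log T)^{-1/2}$ in the squared moment, hence $(\log T)^{-1/4}$ after taking the square root. One must check that the Euler-product constants $\prod_p(\cdots)$ coming from the $\lambda_j$-average and from $L(1,\sym^2\phi_j)^{-1}$ do not reintroduce a compensating $\log$; this is a standard Ramanujan-on-average / Rankin--Selberg bound computation.

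Concretely I would proceed as follows. (i) State and prove the approximate functional equation for $L(s,\phi_j\times f_\ck)$ with an explicit smooth cutoff at length $\asymp t_j^2$ and with uniform control of the archimedean factors over $T<t_j\le 2T$. (ii) Apply Cauchy--Schwarz to remove the square roots, reducing to the second-moment-type bound above. (iii) Bound $L(1,\sym^2\phi_j)^{-1}\ll (\log t_j)^{O(1)}$ crudely, or better, keep it and use its Dirichlet series; the cleanest route is to absorb it so that the spectral weight becomes the Kuznetsov-friendly one. (iv) Open both $L$-values, swap summation, apply Kuznetsov (with a smooth majorant for the indicator of $T<t_j\le 2T$), extract the diagonal. (v) For the diagonal, use the convolution identity for $\rho_\ck\ast\lambda_j$ and the multiplicativity, and invoke the density bound for integers that are norms from both quadratic fields — this is the step I expect to be the main obstacle, since one needs the density estimate with a \emph{power-saving-in-}$\log$ and with enough uniformity to survive the dyadic decomposition and the extra divisor-type factors from the Hecke relations. (vi) Bound the off-diagonal (Kloosterman) contribution by $O(T^{2-\delta})$ via Weil's bound and the rapid decay of the Bessel transform, which is comfortably negligible. (vii) Bound the continuous-spectrum and Eisenstein contributions similarly. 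Assembling (i)--(vii) gives the squared-moment bound with saving $(\log T)^{-1/2}$, and \eqref{eq:fractional-moment-bd} follows.
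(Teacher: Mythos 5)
There is a fatal gap at step (ii). The Cauchy--Schwarz reduction
\begin{equation}
  \sum_{T<t_j\le 2T}\frac{\sqrt{LL'}}{L(1,\sym^2\phi_j)}\le\Bigl(\sum_{T<t_j\le 2T}1\Bigr)^{1/2}\Bigl(\sum_{T<t_j\le 2T}\frac{LL'}{L(1,\sym^2\phi_j)^2}\Bigr)^{1/2}
\end{equation}
cannot yield the theorem, because the second moment on the right is of size $\asymp T^2$, not $\ll T^2(\log T)^{-1/2}$. Indeed, modelling $\log L(\tfrac12,\phi_j\times f_\ck)$ as Gaussian with mean $\mu\approx-\tfrac12\log\log T$ (from the $p^2$ terms in \eqref{eq:secondary-term-Euler-prod}) and variance $\sigma^2\approx\log\log T$ (Lemma \ref{lem:orthog}(3)), and using that the covariance of $\log L$ and $\log L'$ is $O(1)$ for $d\ne d'$ (Lemma \ref{lem:orthog}(1)), one gets $\mathbb E[LL']\approx e^{\mu+\mu'+\frac12(\sigma^2+\sigma'^2)+O(1)}\asymp 1$, whereas $\mathbb E[\sqrt{LL'}]\approx e^{\frac12(\mu+\mu')+\frac18(\sigma^2+\sigma'^2)+O(1)}\asymp(\log T)^{-1/4}$. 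The half-moment is small because the square root damps the rare large values that dominate the first and second moments; Cauchy--Schwarz (or any H\"older interpolation between integral moments) undoes exactly this damping and returns only the trivial bound $T^2$. This is the whole reason the Soundararajan/Harper/Radziwi{\l}{\l}--Soundararajan fractional-moment machinery exists, and it is what the paper uses: the pointwise inequality $2\sqrt{LL'}\le LM(M')^{-1}+L'M'M^{-1}$ with short Euler-product mollifiers $M\approx L^{-1/2}$, an iterative decomposition of the spectrum according to where the polynomials $P_{I_q}$ first become large (Lemma \ref{lem:keyineq}), and the twisted first moment of Theorem \ref{thm:Twisted-first-moment}.

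Two secondary problems. First, even if the second-moment route were not lossy, the bound you claim for it is not provable by the diagonal analysis you sketch: after opening both approximate functional equations the Hecke relations produce $\lambda_j(mn/e^2)$ with $mn/e^2$ as large as $T^4$ over a family of size $T^2$, so the off-diagonal Bessel/Kloosterman terms are not in the range where the transform is negligible; this is a genuine degree-eight moment problem (the spectral analogue of a fourth moment), far from standard. Second, your density count is off: each norm condition has density $\asymp(\log Y)^{-1/2}$, and for distinct fundamental discriminants the two conditions are independent, so the integers $n\le Y$ that are norms from both fields number $\asymp Y/\log Y$, not $\ll Y/(\log Y)^{1/2}$; in any case the relevant quantity in the diagonal is $\sum_{p\le x}\lambda_\ck(p)\lambda_{\ck'}(p)/p=O(1)$, which produces no net power of $\log$ in the second moment, consistent with $\sum LL'\asymp T^2$.
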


Some comments are in order:
\begin{enumerate}
  \item The study of fractional moments originates in works of Ramachandra and  Heath-Brown, in the context of the Riemann zeta-function \cite{Ramachandra:1980, Ramachandra:1980a,Heath-Brown:1981} and Dirichlet $L$-functions \cite{Heath-Brown:2010a}. In recent years, further fractional moment estimates of $L$-functions have emerged as a powerful new arithmetic tool, having their origins in the breakthrough works of Soundararajan \cite{Soundararajan:2009}, Harper \cite{Harper:2013}, and Radziwi{\l}{\l} and Soundararajan \cite{RadziwillSoundararajan:2015a}.
        There have been a number of subsequent works, which build upon these methods and we refer the reader to the survey article of Soundararajan \cite{Soundararajan:2023}. Estimates for fractional moments have been applied to various equidistribution problems, such as the quantum unique ergodicity conjecture (by Lester and Radziwi{\l}{\l} \cite{LesterRadziwill:2020}), equidistribution of toric periods (by Blomer and Brumley \cite{BlomerBrumley:2024}), and statistical independence of Hecke--Maass cusp forms (by Hua, Huang and Li \cite{HuaHuangLi:2024}).
        All these results use that fractional moments of the $L$-functions over the families considered are small, as predicted by the heuristics of Keating and Snaith \cite{KeatingSnaith:2000}. In our setting we also use that for non-genus characters $\ck \neq \ck'$ the two central $L$-values should interact independently from one another so that the fractional moment of the product should also be small (this feature also appears in the genus case however the $L$-function will factor into either three or four distinct $L$-functions provided $d \neq d'$ all of which should interact independently from one another).

  \item
        If $\ck$ is a genus character the $L$-function $L(s,\phi_j \times f_{\ck})$ factors into a product of two different $L$-functions (see Section \ref{sec:real-characters} and \eqref{eq:RS-factorization-genus-characters}) and when at least one of $\ck,\ck'$ is a genus character we improve upon the bound \eqref{eq:fractional-moment-bd} in certain cases.  When precisely one of $\ck,\ck'$ is a genus character we prove the stronger bound $\ll T^2(\log T)^{-3/8}$. If $\ck',\ck$ are both genus characters
        corresponding to $d=d_1d_2$ and $d'=d_1'd_2'$ with $d_1,d_2,d_1',d_2'$ distinct integers, 
        $L(s,\phi_j \times f_{\ck}) L(s,\phi_j \times f_{\ck'})$ factors into a product of four different $L$-functions, and we show that the fractional moment is $\ll T^2(\log T)^{-1/2}$ in this case. This does not, however, allow us to improve on Theorem \ref{thm:mainresult} as the trivial characters will always contribute with the bound stated in Theorem \ref{thm:fracmoments}.
        Following the heuristics of Keating and Snaith \cite[Section 3.2]{KeatingSnaith:2000} and assuming the different central $L$-values interact independently of one another over $T<t_j \le 2T$, these upper bounds are expected to be sharp.
\end{enumerate}

Our proof of Theorem \ref{thm:fracmoments} relies on a new twisted first moment estimates which provides asymptotics and good error terms for  sums like
\begin{equation}\label{eq: twisted first moment}
  \sum_{t_j>0} \frac{L(\tfrac12,\phi_j\times f_\ck)\lambda_{j}(\ell)}{L(1,\sym^2\phi_j)}h(t_j)
\end{equation}
where $h$ is a certain smooth function essentially supported around $T$ and $\lambda_j(\ell)$ is the $\ell$th Hecke eigenvalue of $\phi_j$. See Theorem \ref{thm:Twisted-first-moment} for the precise statement. Such twisted first moments have been studied by Hoffstein, Lee, and Nastasescu \cite{HoffsteinLeeNastasescu:2021}, but they have the simplifying assumption that the level of $f_\ck$ should divide the level of $\phi_j$ which is not satisfied in our case.  The analogous sums over the holomorphic family of weight $k$, level $N$ forms were considered by Michel and Ramakrishnan \cite{MichelRamakrishnan:2012}.
When $\ck$ is the trivial character and $\ell=1$, asymptotics for \eqref{eq: twisted first moment}
also appear in work by Humphries and Radziwi\l\l{} \cite{HumphriesRadziwill:2022}.

Equipped with asymptotics for \eqref{eq: twisted first moment}, in Section \ref{sectionfractional} we estimate the fractional moment using mollifiers, which behave like Euler products.
The following elementary inequality, valid for real numbers $M,M'>0$, $L, L' \ge 0$ is used
\begin{equation} \label{eq:elementary}
  2\sqrt{L L'} \le L M (M')^{-1}+L'M' M^{-1},
\end{equation}
which is related to the starting point for the key inequality in Radzwi{\l}{\l} and Soundararajan \cite[Section 3.1]{RadziwillSoundararajan:2017}.
We apply \eqref{eq:elementary} with $L=L(\tfrac12,\phi_j \times f_{\ck})$ and $L'=L(\tfrac12,\phi_j\times f_{\ck'})$, which is valid since the central $L$-values are non-negative (see \eqref{eq:Waldspurger}).
Noting the Euler product representation
\begin{equation}
  L(s,\phi_j\times f_{\ck})=\prod_p L_p(s,\phi_j \times f_{\ck}),
\end{equation}
which is explicitly given in \eqref{eq:eulerproduct}, a natural choice would be to take
\begin{equation}
  M=\prod_{p\le x} L_p(\tfrac12,\phi_j \times f_{\ck})^{-1/2} \quad \textrm{ and } \quad  M'=\prod_{p\le x} L_p(\tfrac12, \phi_j\times  f_{\ck'})^{-1/2}
\end{equation}
since both $M,M'$ and their inverses can be expressed as Dirichlet series.
To simplify the analysis, we choose $M,M'$ slightly differently. When $\ck^2,(\ck')^2 \neq 1$ we consider instead \begin{equation}M=(\log x)^{1/4} \exp(P(j,\ck))\quad \textrm{ and } \quad M'=(\log x)^{1/4} \exp(P(j,\ck'))\end{equation} where $P(j,\ck)=-\tfrac12\sum_{p\le x} a_{\phi_j \times f_{\ck}}(p)p^{-1/2}$ and $a_{\phi_j \times f_{\ck}}(n) $ is the $n$th coefficient of the Dirichlet series associated to $\log L(s,\phi_j \times f_{\ck})$. The factor $(\log x)^{1/4}$ accounts for the contribution from $\sum_{p \le x}a_{\phi_j \times f_{\ck}}(p^2)/p$, (see the discussion following \eqref{eq:molldef}).
Expanding out $M(M')^{-1},M'M^{-1}$ leads to
long Dirichlet polynomials unless $x$ is rather small. If $x=\log T$ then both $M(M')^{-1},M'M^{-1}$ are well-approximated by Dirichlet polynomials of length $\le T^{\varepsilon}$ for all $t_j \le T^{1+\varepsilon}$.
With this choice of $M,M'$ and $x=\log T$, we can bound the fractional moment in terms of mollified first moments. The estimates established in Section \ref{sectionfractional} for these mollified first moments lead to the bound
\begin{equation} \label{eq:weakbd}
  \sum_{T<t_j\le 2T}   \frac{L(\tfrac12,\phi_j \times f_{\ck})^{\tfrac 1 2} L(\tfrac12,\phi_j \times f_{\ck'})^{\tfrac 1 2}}{L(1,\tmop{sym}^2 \phi_j)}  \ll \frac{T^2}{(\log x)^{1/4}}=\frac{T^2}{(\log \log T)^{1/4}}.
\end{equation}
This provides a non-trivial estimate for the fractional moment and leads to an improvement on Selberg's bound \eqref{selbergbound} for $z,w$ satisfying the hypotheses of Theorem \ref{thm:mainresult}.

To improve upon \eqref{eq:weakbd} we would like to take $x$ larger and we will illustrate how to achieve this for $x=T^{1/(\log \log T)^4}$, say, which yields a bound which is off by a factor of $\log \log T$ from the expected sharp bound. To simplify the discussion, let us assume that $\ck^2,(\ck')^2 \neq 1$. For $x=T^{1/(\log \log T)^4}$ one expects that $P(j,\ck)$ has a Gaussian limiting distribution over $T<t_j \le 2T$ with mean zero and variance $\sim \frac14\log \log T$.  Consequently for most $T < t_j \le 2T$, $|P(j,\ck)| < (\log \log T)^{3/2}$ and using the Taylor expansion for the exponential function we have for these $j$'s that $ \exp(\pm P(j,\ck))$ is closely approximated by a Dirichlet polynomial of length $\le T^{\varepsilon}$. For $T<t_j \le 2T$ with $|P(j,\ck)|, |P(j,\ck')| < (\log \log T)^{3/2}$ we apply \eqref{eq:elementary} with $M=(\log x)^{1/4} \exp(P(j,\ck))$ and $M'=(\log x)^{1/4} \exp(P(j,\ck'))$, expand $M(M')^{-1},M'M^{-1}$ in terms of short Dirichlet polynomials, then use non-negativity to bound the sum of the fractional moment over these $j$'s in terms of sums of mollified first moments over all $T< t_j \le 2T$. To handle the contribution to the fractional moment from the remaining $T<t_j\le 2T$, we apply \eqref{eq:elementary} with $M=M'=1$ and multiply the right-hand side of the resulting inequality by $(P(j,\ck)/(\log \log T)^{3/2})^{2k}+(P(j,\ck')/(\log \log T)^{3/2})^{2k}$, which is $\ge 1$ for these $j$'s, with $k=\lfloor (\log \log T)^{3/2} \rfloor$, say. We then extend the sum on the right-hand side to all $T< t_j \le 2T$ and evaluate it using the twisted first moment.  Combining the bounds from both cases leads to the estimate \eqref{eq:weakbd} with $x=T^{1/(\log \log T)^4}$. To go further, we use the iterative construction of Radziwi{\l}{\l} and Soundararajan \cite{RadziwillSoundararajan:2015a}, which yields the key inequalities given in Lemma \ref{lem:keyineq}. This enables us to take $x$ to be a small, fixed power of $T$. Applying Lemma \ref{lem:keyineq} and proceeding similarly to before we obtain the optimal bound \eqref{eq:fractional-moment-bd}.

\subsection{Applications to quadratic forms}
We now state an application of Theorem \ref{thm:mainresult} to counting quadratic forms. This is in the spirit of \cite[Theorem 3]{Patterson:1975}.

Let $d_0, d \equiv 1\bmod 4$ be \emph{different} negative fundamental discriminants. Fix a positive definite integral binary quadratic form \begin{equation}p_0(x,y)=a_0x^2+b_0xy+c_0y^2 \textrm{ with } d_0=b_0^2-4a_0c_0.
\end{equation} Let $p(x,y)=ax^2+bxy+cy^2$ be any integral positive definite binary quadratic form of discriminant $d$. Then a straightforward computation shows that the distance between the corresponding Heegner points satisfies
\begin{equation} \label{eq:distance}
  4u\left(\tfrac{-b_0+i\sqrt{\abs{d_0}}}{2a_0},\tfrac{-b+i\sqrt{\abs{d}}}{2a}\right)+2= \frac{2}{\sqrt{\abs{dd_0}}}(2(ac_0+a_0c)-bb_0).
\end{equation}
The quantity
$2(ac_0+a_0c)-bb_0$
is sometimes referred to as the codiscriminant of $p$ and $p_0$.

Arguing as in \cite[Theorem 3]{Patterson:1975} and using Theorem \ref{thm:mainresult} leads to the following result:
\begin{theorem} Let $d,d_0$, $(a_0,b_0,c_0)$ be as above with $d\neq -3$. Then the quadratic form counting function
  \begin{equation} n_d(x) =\#\left\{(a,b,c)\in \Z^3 \left\vert \begin{array}{l}a>0\\ b^2-4ac=d\\0<2(ac_0+ca_0)-bb_0\leq x\end{array}\right.\right\}
  \end{equation}
  satisfies

  \begin{equation}
    n_d(x)=\frac{6h(d)}{\sqrt{\abs{dd_0}}}x
    +
    O\left(\frac{x^{2/3}}{(\log x)^{1/6}}\right  ).
  \end{equation}
\end{theorem}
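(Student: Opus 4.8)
The plan is to convert the quadratic-form count into a sum of hyperbolic lattice counts over Heegner points and then invoke Theorem~\ref{thm:mainresult}. Write $\Gamma=\slz$, let $z_0 = \tfrac{-b_0+i\sqrt{\abs{d_0}}}{2a_0}$ be the Heegner point attached to the fixed form $p_0$, and for a positive definite form $q=(a,b,c)$ of discriminant $d$ put $z_q=\tfrac{-b+i\sqrt{\abs{d}}}{2a}$ and write $q\sim q'$ for $\Gamma$-equivalence. Since $d$ is a negative fundamental discriminant, and hence squarefree, every integral positive definite form of discriminant $d$ is primitive, so the set of all such forms splits into exactly $h(d)$ classes under $\Gamma$; fix representatives $p_1,\dots,p_{h(d)}$, with attached Heegner points $z_{p_1},\dots,z_{p_{h(d)}}$, all of discriminant $d$. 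Two standard facts enter: the assignment $q\mapsto z_q$ is injective on positive definite forms of a fixed discriminant and intertwines the action on forms with the M\"obius action on $\H$, so the $\pslz$-orbit of $z_{p_i}$ equals $\{z_q : q\sim p_i\}$; and for $d\neq -3$ the stabilizer of $p_i$ in $\pslz$ is trivial (the case $d=-4$ does not occur since $d\equiv 1\bmod 4$), so $\gamma\mapsto \gamma\cdot p_i$ is a bijection from $\pslz$ onto the class of $p_i$.

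Next I would feed in the distance identity \eqref{eq:distance}. Because $4u(z,w)+2=2\cosh d_\H(z,w)\ge 2$, the codiscriminant $\mathrm{codisc}(q,p_0)=2(ac_0+a_0c)-bb_0=\tfrac{\sqrt{\abs{dd_0}}}{2}\big(4u(z_0,z_q)+2\big)=\sqrt{\abs{dd_0}}\cosh d_\H(z_0,z_q)$ is automatically positive, so $0<\mathrm{codisc}(q,p_0)\le x$ is equivalent to $4u(z_0,z_q)+2\le \tfrac{2x}{\sqrt{\abs{dd_0}}}$. Combining this with the orbit description, and with the fact that $-I$ acts trivially on $\H$ (which accounts for a factor $2$ when passing from the $\slz$-count defining $N$ to the $\pslz$-orbit count), gives
\begin{equation}
  n_d(x) = \sum_{i=1}^{h(d)} \#\{q\sim p_i : 0<\mathrm{codisc}(q,p_0)\le x\}
  = \tfrac12\sum_{i=1}^{h(d)} N\!\left(\tfrac{2x}{\sqrt{\abs{dd_0}}};\,z_0,\,z_{p_i}\right).
\end{equation}
Now $z_0$ has discriminant $d_0$ and each $z_{p_i}$ has discriminant $d$, both squarefree, negative, and distinct, so Theorem~\ref{thm:mainresult} applies to every summand. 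Using $\vol{\GmodH}=\pi/3$ the main term of $N(\tfrac{2x}{\sqrt{\abs{dd_0}}};z_0,z_{p_i})$ is $6\cdot \tfrac{2x}{\sqrt{\abs{dd_0}}}$, and since $\tfrac{2x}{\sqrt{\abs{dd_0}}}\asymp x$ for $d,d_0$ fixed the error is $O\!\big(x^{2/3}(\log x)^{-1/6}\big)$, the implied constant depending on $d$, $d_0$ and $p_i$. Summing over the $h(d)$ classes and halving yields $n_d(x)=\tfrac{6h(d)}{\sqrt{\abs{dd_0}}}\,x+O\!\big(x^{2/3}(\log x)^{-1/6}\big)$; the $h(d)$ implied constants are absorbed into a single one since $d$, $d_0$, $h(d)$ are all fixed.

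The substantive input here is entirely Theorem~\ref{thm:mainresult}; the remainder is bookkeeping, and the step deserving the most care is the group-theoretic dictionary between $\slz$-classes of binary quadratic forms and $\pslz$-orbits of Heegner points — in particular, keeping track of the factor $2$ coming from $\pm I$, and observing that the hypothesis $d\neq -3$ is precisely what makes all relevant stabilizers trivial, so that the orbit count is exact rather than weighted. One should also verify that the error exponent is unaffected by the substitution $x\leftrightarrow \tfrac{2x}{\sqrt{\abs{dd_0}}}$, which holds because the logarithm changes only by $O(1)$.
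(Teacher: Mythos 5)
Your proposal is correct and follows essentially the route the paper indicates (reduce $n_d(x)$ to $\tfrac12\sum_{i=1}^{h(d)} N(2x/\sqrt{\abs{dd_0}};z_0,z_{p_i})$ via the distance identity \eqref{eq:distance} and the dictionary between $\slz$-classes of forms and $\pslz$-orbits of Heegner points, then apply Theorem \ref{thm:mainresult}); the paper itself only cites the argument of Patterson's Theorem 3 rather than writing this out. Your bookkeeping of the factor $2$ from $\pm I$ and of the role of the hypothesis $d\neq -3$ matches the paper's own conventions in the subsequent class-number application.
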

We note that the restriction to $d\neq -3$ is not essential; if $d=-3$ the main term should simply be divided by 3 to account for a non-trivial stabilizer of the corresponding Heegner points.

A variation of the preceding argument leads to an additional application
to counting $\slz$ equivalence classes of pairs of quadratic forms.

For $\gamma=(\begin{smallmatrix}e&f\\g&h\end{smallmatrix})\in\slz$
we write $p^{\gamma}(x,y)=p(e x+f y,g x+h y)$.
Two pairs of positive definite binary quadratic forms $(p,p_0)$ and $(q, q_0)$ are \textit{equivalent} if there exists $\gamma \in \slz$
such that $(p^{\gamma},p_0^{\gamma})=(q,q_0)$ and in this case we write
$(p,p_0) \sim (q,q_0)$. The discriminants $d,d_0$ as well as the
codiscriminant $\Delta=bb_0-2ac_0-2ca_0<0$ are invariant under $\sim$. It is well-known and easy to verify that $\sim$ is an equivalence
relation on the set of pairs of positive definite binary quadratic forms with discriminants $d,d_0$ and codiscriminant $\Delta$. Also the number of $\slz$ equivalence classes $h(d,d_0,\Delta)$ of such pairs of forms is finite. The class number $h(d,d',\Delta)$ has been previously studied
in the work of Hardy and Williams \cite{HardyWilliams:1989},
Morales \cite{Morales:1991},
and more recently by Bir\'o \cite{Biro:2024}.
In particular, for $(dd',\Delta)=1$ and $h(d,d',\Delta)\neq 0$
Hardy and Williams \cite{HardyWilliams:1989} proved that
\[
  h(d,d',\Delta)
  =\sum_{e|(\Delta^2-dd')/4} \bigg( \frac{d}{e}\bigg),
\]
where $(\frac{d}{e})$ is the Kronecker symbol.

Recall the formula for the distance between Heegner points \eqref{eq:distance}. For each pair of hyperbolic lattice points $\pm \gamma$ counted in $N(X;z_d,z_{d'})$, write $z_{d_0}=\pm \gamma z_{d'}$. It follows that the associated triple $(z_d,z_{d_0}, u(z_d,z_{d_0}))$ corresponds to an $\slz$ equivalence class $(p,p_0,\Delta)$ with codiscriminant $\Delta$ satisfying
\[
  0<-\Delta \le \frac{\sqrt{|dd'|}}{2} X.
\]
Therefore, summing over all Heegner points of discriminants $d,d'$ modulo $\slz$ we have that
\[
  \sum_{z_d,z_{d'}} \frac12 N(X;z_d,z_{d'})=\sum_{0< -\Delta \le \frac{\sqrt{|dd'|}}{2} X} h(d,d',\Delta).
\]
Applying Theorem \ref{thm:mainresult} leads to the following result:

\begin{theorem} Let $d,d'<-3$ be different squarefree discriminants. Then
  \[
    \sum_{0<-\Delta \le x} h(d,d',\Delta)=\frac{6h(d)h(d')}{\sqrt{\abs{dd'}}}x
    +
    O\left(\frac{x^{2/3}}{(\log x)^{1/6}}\right  ).
  \]
\end{theorem}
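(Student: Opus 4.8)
The plan is to deduce the theorem directly from Theorem~\ref{thm:mainresult} together with the orbit-counting identity indicated just above the statement. Put $x=\tfrac{\sqrt{|dd'|}}{2}X$, so that by \eqref{eq:distance} the condition $0<-\Delta\le x$ on a pair of Heegner points $(z,w)$ of discriminants $d,d'$ is the same as $4u(z,w)+2\le X$; the lower bound $-\Delta>0$ holds automatically, since $z\ne w$ (the discriminants differ), hence $u(z,w)>0$ and $4u(z,w)+2\ge 2$. The identity to establish is
\[
  \sum_{0<-\Delta\le x} h(d,d',\Delta)=\tfrac12\sum_{z_d,\,z_{d'}}N(X;z_d,z_{d'}),
\]
where $z_d$ runs over representatives of the $h(d)$ Heegner points of discriminant $d$ modulo $\Gamma=\slz$ and $z_{d'}$ over representatives of the $h(d')$ Heegner points of discriminant $d'$ modulo $\Gamma$.

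First I would prove this identity. Modulo $\Gamma$, the Heegner points of discriminant $d$ (resp.\ $d'$) are in bijection with the form class group of that discriminant, of cardinality $h(d)$ (resp.\ $h(d')$); fix representatives $z_d$ and $z_{d'}$ accordingly. The diagonal action $\gamma\cdot(z,w)=(\gamma z,\gamma w)$ on pairs of Heegner points of discriminants $d,d'$ preserves $u(z,w)$, hence the codiscriminant $\Delta$, and its orbits are exactly the $\slz$-equivalence classes of pairs $(p,p_0)$; thus $\sum_{0<-\Delta\le x}h(d,d',\Delta)$ counts those orbits with $0<-\Delta\le x$. Each orbit has a representative $(z_d,w)$, and since $d<-3$ the stabilizer of $z_d$ in $\Gamma$ is $\{\pm I\}$, acting trivially on $\H$, so this representative is unique; hence the number of orbits equals $\sum_{z_d}\#\{w\text{ Heegner of discriminant }d':4u(z_d,w)+2\le X\}$. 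Splitting the inner set according to the class of $w$ and using that the Heegner points in a fixed class are parametrized bijectively by $\{\pm I\}\backslash\Gamma$ (again because the stabilizer is $\{\pm I\}$) produces the factor $\tfrac12$ and the term $\tfrac12 N(X;z_d,z_{d'})$ for each pair, which is the displayed identity. This is the step where the hypothesis $d,d'<-3$ is used: for $d=-3$ or $d=-4$ the larger stabilizers would rescale the count.

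Then I would substitute Theorem~\ref{thm:mainresult}. For $\Gamma=\slz$ we have $\vol{\GmodH}=\pi/3$, so $2\pi/\vol{\GmodH}=6$, and as the sum ranges over the at most $h(d)h(d')$ pairs of Heegner points of the fixed discriminants $d,d'$, the implied constants from Theorem~\ref{thm:mainresult} can be combined into one depending only on $d,d'$. Hence
\[
  \sum_{0<-\Delta\le x}h(d,d',\Delta)=\tfrac12 h(d)h(d')\bigl(6X+O(X^{2/3}(\log X)^{-1/6})\bigr)=3h(d)h(d')X+O\bigl(X^{2/3}(\log X)^{-1/6}\bigr).
\]
Substituting $X=2x/\sqrt{|dd'|}$ turns the main term into $\tfrac{6h(d)h(d')}{\sqrt{|dd'|}}x$, and since $d,d'$ are fixed the error becomes $O(x^{2/3}(\log x)^{-1/6})$, as claimed.

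There is no analytic obstacle here — Theorem~\ref{thm:mainresult} already carries all the difficulty — and the only delicate points are the combinatorial ones in the second step: translating $\slz$-equivalence classes of pairs of forms into $\Gamma$-orbits of pairs of Heegner points, tracking the factor $\tfrac12$ coming from $\slz$ versus its image in $\pslz$, and verifying that the stabilizers are trivial. One should also double-check the elementary identity \eqref{eq:distance} and that $2(ac_0+a_0c)-bb_0=-\Delta$, which is what makes $x=\tfrac{\sqrt{|dd'|}}{2}X$ the correct change of variables.
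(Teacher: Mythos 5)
Your proposal is correct and follows essentially the same route as the paper: establish the orbit-counting identity $\sum_{0<-\Delta\le x}h(d,d',\Delta)=\tfrac12\sum_{z_d,z_{d'}}N(X;z_d,z_{d'})$ with $x=\tfrac{\sqrt{|dd'|}}{2}X$ via \eqref{eq:distance}, then apply Theorem \ref{thm:mainresult} to each of the $h(d)h(d')$ pairs and use $2\pi/\vol{\GmodH}=6$. Your write-up is in fact more careful than the paper's about the stabilizer issue behind the hypothesis $d,d'<-3$ and the source of the factor $\tfrac12$.
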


\subsection{A second moment estimate for the error term}

For any two points $z,w$ we consider the second moment of the error term. Chamizo \cite{Chamizo:1996a}, using his large sieve for Riemann surfaces \cite{Chamizo:1996}, obtained an upper bound of the form
\begin{eqnarray} \label{eq:secondmomentconj}
  \frac{1}{X}\int_{X}^{2X} \left|  N(x;z, w)  - \frac{2 \pi x}{\vol{\GmodH}}  \right|^2 dx \ll X  (\log X)^a
\end{eqnarray}
with $a=2$ (and proved an analogous result for any cofinite Fuchsian group).
This second moment supports Conjecture \ref{mainconjecture} on average.
The power of $\log X$ can in fact be reduced to $a=1$ \cite{Cherubini:2018} using techniques originating in the work of Cram\'er \cite{Cramer:1922}.
Phillips and Rudnick \cite[Section 3.8]{PhillipsRudnick:1994}
noticed that numerical data for Fermat groups indicates that for $z=w$ we should be able to take $a=0$. As a corollary of Theorem \ref{thm:fracmoments} we obtain the following improvement over Heegner points:

\begin{corollary} \label{corollary1.5}
  Let $\G = \slz$ and let $z_d, z_{d'}$ be Heegner points of distinct squarefree discriminants $d,d'<0$. Then
  \begin{align}
    \frac{1}{X}\int_{X}^{2X} \left|N (x;z_d, z_{d'})  - \frac{2 \pi x}{\vol{\GmodH}}\right|^2 dx \ll X  (\log X)^{3/4}.
  \end{align}

\end{corollary}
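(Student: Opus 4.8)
The plan is to connect the second moment of the error term to the spectral side via a standard Chebyshev-type (smoothed) bound, and then insert the bound \eqref{eq:crucial-bound} in place of the local Weyl law.  Concretely, write $E(x) = N(x;z_d,z_{d'}) - \tfrac{2\pi x}{\vol{\GmodH}}$.  The starting point is the classical spectral expansion of $N(x;z,w)$ in terms of a test function built from the point-pair invariant (as in \cite[Chapter 12]{Iwaniec:2002}), which after choosing a suitable smoothing parameter and estimating the continuous spectrum contribution expresses $E(x)$ (on average over dyadic windows) in terms of the spectral exponential sum $\mathfrak S(T,x)$ with $T \approx x^{1/2}$, together with admissible error terms.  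Using this one obtains an inequality of the shape
\begin{equation}
  \frac1X \int_X^{2X} |E(x)|^2\, dx \ll X^{-1} \sum_{T < t_j \le 2T} \frac{|\phi_j(z_d)\phi_j(z_{d'})|^2}{t_j^{?}} \cdot (\cdots) + \text{lower order},
\end{equation}
but in fact the cleanest route is the route of Chamizo: one bounds the second moment by a weighted sum over the spectrum in which each term carries a factor $|\phi_j(z_d)\phi_j(z_{d'})|$ (coming from the product of the two half-integral-weight Bessel transforms after an application of Cauchy--Schwarz that keeps one copy of each eigenfunction value), rather than $|\phi_j(z_d)|^2 + |\phi_j(z_{d'})|^2$ which is what the local Weyl law alone would feed in.

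The key point is the following.  Chamizo's large sieve argument \cite{Chamizo:1996a} gives
\begin{equation}
  \frac1X \int_X^{2X} |E(x)|^2\, dx \ll X \cdot \sup_{T \asymp X^{1/2}} \frac{1}{T^2}\sum_{t_j \le T} |\phi_j(z_d)|\,|\phi_j(z_{d'})| \;\cdot\; (\log X)^{?},
\end{equation}
and in the case $z=w$ the local Weyl law inserts $T^2$ for the inner sum, producing the $(\log X)^2$ (or $(\log X)^1$ after Cramér-type refinements) of \eqref{eq:secondmomentconj}.  For $z_d \neq z_{d'}$ Heegner points, we instead invoke \eqref{eq:crucial-bound}, namely $\sum_{t_j \le T} |\phi_j(z_d)\phi_j(z_{d'})| \ll T^2 (\log T)^{-1/4}$, which is itself a consequence of the fractional moment bound Theorem \ref{thm:fracmoments} via Waldspurger's formula.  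Substituting this gains a factor $(\log X)^{-1/4}$ over the $z=w$ bound.  Since the sharpest known $z=w$ bound has $a=1$, one arrives at $a = 1 - 1/4 = 3/4$, which is exactly the claimed $X(\log X)^{3/4}$.

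In more detail, the steps are: (i) recall the smoothed spectral expansion of $N(x;z_d,z_{d'})$ and reduce, as in \cite{Cherubini:2018, Chamizo:1996a}, the second moment of $E(x)$ over $[X,2X]$ to a spectral sum over $t_j \ll X^{1/2+\varepsilon}$, with the dyadic blocks $t_j \asymp T$ contributing $\ll (X/T^2)\sum_{t_j \asymp T} |\phi_j(z_d)\phi_j(z_{d'})|$ up to logarithmic losses and controllably small tails from large $t_j$ and from the continuous spectrum; (ii) apply \eqref{eq:crucial-bound} to each dyadic block, which replaces the trivial $T^2$ by $T^2/(\log T)^{1/4}$; (iii) sum the dyadic contributions, using that the only genuine logarithmic loss beyond the $(\log T)^{-1/4}$ gain is the single power of $\log X$ already present in the best unconditional $z=w$ estimate; (iv) collect the error terms and conclude.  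The main obstacle is step (i): one must verify that the reduction of the second moment to the spectral sum genuinely isolates the product $|\phi_j(z_d)\phi_j(z_{d'})|$ rather than a sum of squares, i.e.\ that the Cauchy--Schwarz / large-sieve step can be arranged so that one factor of $\phi_j(z_d)$ and one factor of $\phi_j(z_{d'})$ survive.  This is precisely the structure exploited by Chamizo for the cross term and by Cherubini for the refinement to $a=1$; here one simply needs to track it carefully and feed in \eqref{eq:crucial-bound} at the final stage in place of the local Weyl law.  Everything else is routine bookkeeping with well-documented inputs.
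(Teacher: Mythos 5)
Your plan is essentially the paper's own proof: the paper sandwiches $N(x;z_d,z_{d'})$ between smoothed kernels $K^\pm$, expands spectrally via the pre-trace formula, integrates the square against a smooth weight $f(x/X)$ so that repeated integration by parts in $x$ (Lemma \ref{0405:lemma1}) produces off-diagonal decay $(1+|t_j-t_l|)^{-N}$, and the surviving near-diagonal sum $\sum_{t_j\le 1/\delta}|\phi_j(z_d)\phi_j(z_{d'})|/t_j^2$ is bounded by $(\log(1/\delta))^{3/4}$ using dyadic blocks and \eqref{eq:crucial-bound}, exactly your steps (i)--(iv); in particular the product structure you worry about in step (i) is automatic, since each pre-trace term already carries $\phi_j(z_d)\overline{\phi_j(z_{d'})}$ and no Cauchy--Schwarz splitting is needed. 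The only inessential point is your appeal to Chamizo's large sieve and Cherubini's Cram\'er-type $a=1$ refinement: the smoothed setup makes the off-diagonal negligible outright, and the exponent $3/4$ arises directly from $\sum_{n\le \log_2(1/\delta)} n^{-1/4}\asymp(\log(1/\delta))^{3/4}$ with the final choice $\delta=X^{-1/2}$, which is the precise form of your heuristic $a=1-\tfrac14$.
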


\subsection{Structure of the paper:} The paper is organized as follows:
In Section \ref{S2} we set notation and recall relevant background. In Section \ref{sectiontwisted} we state and  prove  the twisted first moment, and in Section \ref{sectionfractional} we prove Theorem \ref{thm:fracmoments}. We deduce Theorem \ref{thm:mainresult} from Theorem \ref{thm:fracmoments} in Section \ref{finalproof}. We finally prove Corollary \ref{corollary1.5} in Section \ref{secondmomenterror}.

\section*{Acknowledgements}
The authors are grateful to Chung-Hang Kwan, Min Lee, Bingrong Huang, Yiannis Petridis and Ze{\'e}v Rudnick for very useful discussions in relation to this project.

This material is based upon work supported by the Swedish Research Council
under Grant no. 2021-06596 while the authors were in residence at Institut Mittag--Leffler in Djursholm, Sweden, during the programme \lq Analytic Number Theory\rq{} in
January-April of 2024.
During the preparation of this work M.R. was supported by Grant
DFF-3103-0074B from the Independent Research Fund Denmark.
D. C. was supported by the Hellenic Foundation for Research and Innovation (H.F.R.I.) under the “3rd Call for H.F.R.I. Research Projects to support Faculty Members \& Researchers” (Project Number: 25622).
G.C.~is member of the INdAM group GNSAGA.

\section{Preliminaries}\label{S2}

We begin by setting up some notation and recalling some results that we will need later.

\subsection{Heegner points}\label{sec:heegner}
Recall that a discriminant $d=b^2-4ac \equiv0,1 \pmod 4$ is called fundamental if it cannot be written as $d=f^2d'$ for $d'$ a discriminant and $f>1$. For simplicity, we will always assume that $d \equiv 1\pmod 4$.

Consider $z\in \H$
a Heegner point
with negative fundamental
discriminant
$d<0$, i.e. \begin{equation}
  z=\frac{-b+\sqrt{d}}{2a}
\end{equation}
where $az^2+bz+c$
is an integer quadratic form with $a>0$ and discriminant $d$. Here (and elsewhere) we choose the root such that $\sqrt{d}=i\sqrt{\abs{d}}$.
The ring of integers of the imaginary quadratic field $K=\Q(\sqrt{d})$ is denoted by $\mathcal{O}_K$. The class group of $K$ is denoted by $\Cl_K$,
and its order by $h(d)$. The Heegner points of discriminant $d$ modulo $\slz$ are in 1-1 correspondence with the ideal class group of $K$, and we denoted by $z_{\a}$ the image of $\a\in \Cl_K$ under such a correspondence. For $d$ a fundamental discriminant, there are $h(d) \asymp |d|^{1/2 \pm\varepsilon}$ Heegner points $\{ z_d\}$ of discriminant $d$ modulo $\slz$ and Duke \cite{Duke:1988} proved these points equidistribute in $\slz\backslash\mathbb H$ with respect to hyperbolic measure as $d \rightarrow -\infty$.

\subsection{Class group \texorpdfstring{$L$-functions}{L-functions}}
Let $\widehat{\Cl}_K$ be the group of characters of $\Cl_K$. For $\ck\in \widehat{\Cl}_K$  Hecke \cite{Hecke:1920} defined the corresponding class group $L$-function for $\Re(s)>1$ by    \begin{equation}L_K(s,\ck)=\sum_{0\neq\mathfrak{a}\subseteq \mathcal O_K}\frac{\ck(\mathfrak a)}{N(\mathfrak a)^s}=\prod_{\mathfrak p}(1-\ck(\mathfrak p)N(\mathfrak p)^{-s})^{-1},\end{equation} where the sum is over all  non-zero integral ideals of $\mathcal{O}_K$. This admits meromorphic continuation to $s\in \C$ and satisfies the functional equation
\begin{equation}\label{functional-equation-class-group-L-function}\Lambda_K(s,\ck)=\Lambda_K(1-s, \ck),\end{equation} where $\Lambda_K(s,\ck):=\left(\frac{\abs{d}}{(2\pi)^2}\right)^{s/2}\Gamma(s)L_K(s,\ck)$ is the completed $L$-function.
The continued function $L_K(s,\ck)$ is entire except if $\ck=1$. In the case
$\ck=1$
it has a simple pole
at $s=1$ with residue $2\pi h(d)/(\abs{\mathcal O_K^*}\sqrt{\abs{d}})$.

\subsection{Theta series} \label{sec:theta-series}
The above properties of the class group
$L$-function
can be deduced from the properties of the corresponding theta series
\begin{equation}\label{eq:theta series Fourier expansion}
  f_\ck(z)=\delta_{\ck=1}\frac{\abs{\Cl_K}}{\abs{\Ocal_K^*}} +\sum_{0\neq  \a\subseteq \mathcal{O}_K} \ck(\a) e(N(\a)z)=\sum_{n=0}^\infty\lambda_\ck(n)e(nz),
\end{equation}
where, for $n\geq 1$ we have  \begin{equation}
  \lambda_{\ck}(n)=\sum_{\substack{0\neq \a\subseteq \mathcal{O}_K\\ N(\a)=n}} \ck(\a)
\end{equation}
and by Dirichlet's class number formula \cite[Equation (22.59)]{IwaniecKowalski:2004}
\begin{equation}\label{eq:zero-fourier-coefficient}
  \lambda_\ck(0)=\delta_{\ck=1}\frac{\sqrt{\abs{d}}L(1,\chi_d)}{2\pi}.
\end{equation} Here $\chi_d(n)=\left(\tfrac{d}{n}\right)$ is the field character, i.e. $\left(\tfrac{d}{n}\right)$ is the Kronecker symbol. We note that $\chi_d$ is a primitive quadratic character modulo $\abs{d}$ with sign equal to the sign of $d$.

By the ideal theory for imaginary quadratic fields \cite[page 57]{IwaniecKowalski:2004} we find that
\begin{equation}\label{eq:basics of lambda_chi}
  \lambda_\ck(n)\in \R,\quad  \abs{\lambda_\ck(n)}\leq d(n),\quad \textrm{ and }  \lambda_\ck(n)=\pm 1 \textrm{ if }n\vert d.
\end{equation}

The theta series $f_\ck$ is a modular form of weight 1 for $\G_0(\abs{d})$ with character $\chi_d$, and with Hecke eigenvalues $\lambda_\ck(n)$.
For $\ck$ not real $f_\ck$ is   a primitive \emph{cusp form}.
We refer to \cite[Section 12.4]{Iwaniec:1997} for additional details.
Clearly the $L$-function of $f_\ck$
coincides with the corresponding class group $L$-function, i.e.~$L(s, f_\ck)=L_K(s,\ck)$ and it has an Euler product
\begin{equation}\label{eq:euler-product-class-group-L}L(s, f_\ck)=\prod_{p}(1-\lambda_\ck(p)p^{-s}+\chi_d(p)p^{-2s})^{-1}=\prod_{p}\prod_{r=1}^2(1-\alpha^{(r)}_\ck(p) p^{-s})^{-1}.\end{equation}

\subsubsection{Real characters} \label{sec:real-characters}The subgroup  of real characters  is denoted by $\mathcal G_K\subseteq \widehat{\Cl}_K$.
If $\ck\in \mathcal G_K$
it can be realized as a \emph{genus character} as follows (see e.g. \cite[page 54 ff.]{Siegel:1980}): We may factor $d=d_1d_2$ as a product of discriminants. This corresponds to a factorization
\begin{equation}\chi_{d}=\left(\frac{d}{\cdot}\right)=\left(\frac{d_1}{\cdot}\right)\left(\frac{d_2}{\cdot}\right)=\chi_{d_1}\chi_{d_2}.
\end{equation}
For such a factorization Gauss defined a real class group character $\ck_{d_1,d_2} \in \widehat{\Cl}_{K}$  by setting
\begin{equation}
  \ck_{d_1,d_2}(\mathfrak p)=\left(\tfrac{d_i}{N(\mathfrak p)}\right),
\end{equation}
where the prime ideal $\mathfrak p \nmid (d_i)$. If $\mathfrak p \mid (d)$ there is a unique such $i$ since $d$ is squarefree, and if $\mathfrak p \nmid (d)$ we have $(\frac{d_1}{N(\mathfrak p)})=(\frac{d_2}{N(\mathfrak p)})$ by the factorization theory of ideals in
quadratic~fields.
By extending to all fractional ideals by multiplicativity, this defines a character of $\widehat{\Cl}_{K}$,
and all real characters can be realized in this way
for some factorization of $d$ into discriminants. The genus characters
form a subgroup of order $2^{k-1}$
where $k$ is the number of prime factors of $d$. Since factorizations of $d$ into pairs of discriminants correspond to factorizations of $\abs{d}$ into pairs of positive integers we have a bijection between the set of \emph{unordered} pairs $(q_1,q_2)$ of positive integers
with $q_1q_2=\abs{d}$
and $\mathcal G_K$ given by
$(q_1,q_2)\mapsto \ck_{D(q_1),D(q_2)}$
where $D(q)=(-1)^{(q-1)/2}q$.
Here $(1, \abs{d})$ corresponds to the trivial character.

For $\ck=\ck_{d_1,d_2}$ we have Kronecker's factorization formula \cite[Theorem 4]{Siegel:1980}
\begin{equation}\label{eq:Kronecker-factorization}L_K(s, \ck_{d_1,d_2})=L(s,\chi_{d_1})L(s,\chi_{d_2}).\end{equation}
This shows
\begin{equation}\label{eq:Fourier-coefficients when genus}\lambda_{\ck_{d_1,d_2}}(n)=\sum_{m\vert n}\chi_{d_1}(m)\chi_{d_2}(\tfrac n m)={\chi_{d_1}}*{\chi_{d_2}}(n).\end{equation}
We note that
$\lambda_{\ck_{d_1,d_2}}(n)$ is the $n$th Hecke eigenvalues of a certain weight 1 Eisenstein series, see \cite[Section 3.2 and Equation (4.13)]{Young:2019}. More precisely, we find, using the explicit computations in \cite[Proposition 4.13]{Young:2019}, Dirichlet's class number formula \cite[Equation (22.59)]{IwaniecKowalski:2004}, a Gauss sum computation \cite[Theorem 9.17]{MontgomeryVaughan:2007}, \eqref{eq:theta series Fourier expansion} and \eqref{eq:Fourier-coefficients when genus}, that
\begin{equation}\label{eq:Eisenstein characters to theta series}
  E^{*}_{\chi_{d_1},\chi_{d_2}}(z,\tfrac 12)=\sqrt{4\pi y}f_{\ck_{d_1,d_2}}(z)
\end{equation}
where $E^{*}_{\chi_{d_1},\chi_{d_2}}(z,s)$ is the \lq completed\rq{}  character Eisenstein series as defined in \cite[Section 4.1]{Young:2019}.

\subsection{\texorpdfstring{Hecke--Maass $L$-functions}{Hecke--Maass L-functions}}
Consider an orthonormal system of Hecke--Maass cusp forms $\{\phi_j\}$
for the modular group $\slz$.
Hence, each $\phi_j$ is an eigenfunction of the hyperbolic Laplacian
with eigenvalue
\begin{equation}
  \lambda_j = \bigl(\tfrac12+it_j\bigr)\bigl(\tfrac12-it_j\bigr)\geq 0.
\end{equation}
We have (see e.g. \cite[Chapter~11]{Iwaniec:2002}) that either $\lambda_j=0$
or $\lambda_j>1/4$ (in which case we select $t_j\in\RR^+$).
Moreover, $\phi_j$ has a system
$\{\lambda_j(n)\}$ of Hecke eigenvalues satisfying the Hecke relations
\begin{equation}\label{eq:Hecke-relations}
  \lambda_j(m)\lambda_j(n) = \sum_{d|(m,n)} \lambda_j\left(\tfrac{mn}{d^2}\right).
\end{equation}
Associated to $\phi_j$ there is an $L$-function given for $\Re(s)>1$ by
\begin{equation}
  L(s,\phi_j) = \sum_{n\geq 1}\frac{\lambda_j(n)}{n^s},
\end{equation}
which admits an Euler-factorization
\begin{equation}\label{0404:eq001}
  L(s,\phi_j)= \prod_{p} (1-\lambda_j(p)p^{-s}+p^{-2s})^{-1}=\prod_{p} \prod_{r=1}^2(1-\alpha^{(r)}_j(p)p^{-s})^{-1}.
\end{equation}
This $L$-function admits analytic continuation to $\CC$ and satisfies a functional equation
relating $s$ and $1-s$, which
depends on the parity of $\phi_j$:
define $\epsilon_j\in\{\pm 1\}$ by the relation $\phi_j(-\overline z)=\eps_j\phi(z)$;
then we say that $\phi_j$ is even (resp.~odd) if $\eps_j=1$ (resp.~$\eps_j=-1$).
Also, define the local factor at infinity to be
\begin{equation}
  L_\infty(s,\phi_j) = \prod_{\pm}\Gamma_\RR\left(s+\frac{1-\eps_j}{2}\pm it_j\right),
\end{equation}
where $\Gamma_\RR(s):=\pi^{-\tfrac{s}{2}}\Gamma(\frac s2)$, and the completed $L$-function
$\Lambda(s,\phi_j) = L(s,\phi_j) L_\infty(s,\phi_j)$.
Then we have (see e.g.~\cite[\S 2.2]{BlomerFouvryKowalskiMichelEtAl:2023}\footnote{Note that \cite{BlomerFouvryKowalskiMichelEtAl:2023}  has a typo in the formula for $L_\infty(s,\phi_j)$; they have $s-$ instead of $s+$.})
\begin{equation}
  \Lambda(s, \phi_j)=\eps_j\Lambda(1-s, \phi_j).
\end{equation}
In particular, this implies that for odd forms the central value $L(\frac 12,\phi_j)$ vanishes.

\subsection{Character twists of Hecke--Maass forms}
Let $\phi_j$ be a Hecke--Maass form of full level, and let $\chi$ be a primitive Dirichlet character modulo $q$.
We consider the character twist
\begin{equation}
  L(s,\phi_j\otimes\chi) = \sum_{n\geq 1} \frac{\lambda_j(n)\chi(n)}{n^s} = \prod_{p}(1-\lambda_j(p)\chi(p)p^{-s}+\chi^2(p)p^{-2s})^{-1}
\end{equation}
for $\Re(s)>1$. The associated Gamma factors are
(see e.g.~\cite[Lemma 2.1]{BlomerFouvryKowalskiMichelEtAl:2023})
\begin{equation}
  L_\infty(s,\phi_j\otimes \chi) = q^s\prod_\pm \Gamma_\R\left(s+\frac{(1-\eps_j\chi(-1))}{2}\pm it_j\right).
\end{equation}
Then by \cite[Lemma 2.1]{BlomerFouvryKowalskiMichelEtAl:2023} the product
$
  \Lambda(s, \phi_j\otimes\chi):= L_\infty(s,\phi_j\otimes \chi)L(s,\phi_j\otimes \chi)
$
admits analytic continuation to $\C$ and satisfies the functional equation
\begin{equation}\label{functional-equation-character-twist}
  \Lambda(s,\phi_j\otimes\chi)=\epsilon(\phi_j\otimes \chi)\Lambda(1-s,\phi_j\otimes \overline{\chi}).
\end{equation}
Here the root number is given by
$\epsilon(\phi_j\otimes\chi)=\epsilon_j\chi(-1)\epsilon_\chi^2$, where%
\begin{equation}\label{NormalizedGaussSum}
  \epsilon_\chi=\frac{1}{\sqrt{q}} \sum_{h \bmod {q}}  \chi(h) e\Bigl(\frac{h}{q} \Bigr)
\end{equation}
is the normalized Gauss sum.
Since $\epsilon_\chi$ lies on the unit circle and satisfies $\overline{\epsilon_\chi}=\chi(-1)\epsilon_{\overline{\chi}}$  (see \cite[Theorems 9.7, 9.5]{MontgomeryVaughan:2007}) we find that if $\chi$ is real then  $\epsilon_\chi^2=\chi(-1)$ and therefore
$
  \epsilon(\phi_j\otimes\chi)=\epsilon_j.
$
It follows that $L(\tfrac 12,\phi_j\otimes\chi)=0$ for every \emph{real} primitive character $\chi$ and every $\phi_j$ odd.

\subsection{Rankin--Selberg convolution} \label{sec:Rankin-Selberg}
Given a class group character $\ck$ and its associated theta series $f_\ck$
as in Section~\ref{sec:theta-series}, we wish to study the Rankin--Selberg convolution $L$-function
\begin{equation}
  L(s,\phi\times f_\ck) = L(2s,\chi_d) \sum_{n\geq 1} \frac{\lambda(n)\lambda_{\ck}(n)}{n^s},
  \qquad\Re(s)>1. \end{equation} Here either $\phi=\phi_j$ for $\phi_j$ a cusp form of full level or $\phi=E_t=E(\cdot,\tfrac12+it)$ is the  Eisenstein series of full level.  The number $\lambda(n)$ is the $n$th Hecke eigenvalue of $\phi$. Note that in the case of Eisenstein series $\lambda(n)=\lambda_t(n)=\sum_{ab=n}a^{it}b^{-it}$. Let $t$ be the spectral parameter of $\phi$.

The Rankin--Selberg convolution admits an Euler product given by (compare \cite[Lemma 1.6.1]{Bump:1997})
\begin{equation} \label{eq:eulerproduct}
  L(s,\phi_j\times f_\ck)=\prod_p\prod_{r_1,r_2=1}^2 (1-\alpha^{(r_1)}_j(p)\alpha^{(r_2)}_{\ck}(p)p^{-s}))^{-1},
\end{equation}
where $\alpha_j^{(r)}(p)$ and $\alpha_\ck^{(r)}(p)$ are as in \eqref{0404:eq001} and \eqref{eq:euler-product-class-group-L}, respectively.
It follows that its logarithm has a Dirichlet series representation
\begin{equation}\log L(s,\phi_j\times f_\ck)=\sum_p\sum_{n=1}^\infty\frac{1}{n}\sum_{r_1,r_2=1}^2\left(\alpha_j^{(r_1)}(p)\alpha^{(r_2)}_{\ck}(p)\right)^n \frac{1}{p^{ns}}
\end{equation}
We note for later that the first term in the $n$-sum equals
\begin{equation}\label{eq:dominant-term-Euler-prod}
  \lambda_\ck(p)\lambda_j(p)p^{-s},
\end{equation}
while the $n=2$ term equals
\begin{equation}\label{eq:secondary-term-Euler-prod}\frac{(\lambda_j(p^2)-1)(\lambda_\ck(p^2)+\chi_d(p)-2)}{2}p^{-2s}.
\end{equation}
Returning to the situation
of a general $\phi$ consider the gamma factor
\begin{equation}
  L_\infty(s,\phi\times f_\ck) = \left(\frac{\abs{d}}{4\pi^2}\right)^s\prod_{\pm} \Gamma(s\pm it)
\end{equation}
and define the completed $L$-function
$\Lambda(s, \phi\times f_\ck) = L_\infty(s,\phi\times f_\ck)L(s,\phi_j\times f_\ck)$. Then $\Lambda(s, \phi\times f_\ck)$ admits meromorphic continuation and satisfies the functional equation
\begin{equation}\label{functional-equation-RS}
  \Lambda(s, \phi\times f_\ck) = \Lambda(1-s,\phi\times f_\ck).
\end{equation} It is entire except in the case where $\ck=1$ and $\phi=E_t=E(\cdot,\tfrac12+it)$ is Eisenstein; in this case it has poles at $1\pm it$ and $\pm it$.

To see this consider first the case where $\phi=E_t$ is Eisenstein. Then by a local computation using \cite[Lemma 1.6.1]{Bump:1997} we see that
\begin{equation}\label{eq:factorisation when Eisenstein}L(s,E_t\times f_\ck) = L_K(s-it, \ck)L_K(s+it, \ck)\end{equation}
and \eqref{functional-equation-RS} now follows from the functional equation of the class group $L$-function \eqref{functional-equation-class-group-L-function}. If $\ck=\ck_{d_1,d_2}$ is a genus character it factors further into 4 degree 1 $L$-functions
\begin{equation}\label{eq:double-factorization}L(s,E_t\times f_{\ck_{d_1,d_2}})=L(s-it,\chi_{d_1})L(s-it,\chi_{d_2})L(s+it,\chi_{d_1})L(s+it,\chi_{d_2}),
\end{equation}
by the Kronecker factorization \eqref{eq:Kronecker-factorization} This proves the claim about the poles.

Assume next that $\phi_i$ is cuspidal. Then \eqref{functional-equation-RS} follows from \cite[page 136 and Appendix]{KowalskiMichelVanderKam:2002}.
If $\ck=\ck_{d_1,d_2}$ is a genus character, a local computation, again using \cite[Lemma 1.6.1]{Bump:1997}, shows that
\begin{equation} \label{eq:RS-factorization-genus-characters}
  L(s,\phi_j\times f_{\ck_{d_1,d_2}}) = L(s,\phi_j\otimes\chi_{d_1})L(s,\phi_j\otimes\chi_{d_2}).
\end{equation} Note that in this case the functional equation \eqref{functional-equation-RS} follows also from the functional equation of the character twists \eqref{functional-equation-character-twist}
using
that $\chi_{d_i}$ is a primitive character of conductor $\abs{d_i}$, and that $d_1d_2=d<0$, so one of $\chi_{d_1}$, $\chi_{d_2}$ is
even and the other is odd.

\subsection{Waldspurger's formula} \label{waldspurgersubsection}
Let $\phi_j$ be a Hecke--Maass cusp form.  The Waldspurger--Zhang formula \cite{Waldspurger:1981, Zhang:2001a, Zhang:2001, Zhang:2004} (see also \cite[p. 647]{HarcosMichel:2006},  \cite[Eq. (3.3)]{LiuMasriYoung:2013}) gives that
\begin{equation}\label{eq:Waldspurger}
  \frac{L(\tfrac{1}{2},\phi_j\times f_\ck)}{L(1, \sym^2 \phi_j)}=c_d^{-1}\abs{\sum_{\a\in \Cl_K} \ck(\a) \phi_j(z_{\a})}^2.
\end{equation}
Here $c_d=c\sqrt{\abs{d}}$ where $c$ is a
positive
absolute constant. There is some disagreement in the literature about the precise value of $c$ (See the discussion in \cite[p. 1339-1340]{BlomerCorbett:2022}) but it is not important for our purposes.

Using the orthogonality of characters on finite groups \cite[Thm 4.4]{MontgomeryVaughan:2007} we find from \eqref{eq:Waldspurger} that
\begin{equation}\label{eq:Waldspurger2}
  \frac{c_d}{h(d)} \sum_{\ck\in \widehat{\Cl}_K}
  \frac{L(\tfrac{1}{2},\phi_j\times f_\ck)}{L(1, \sym^2 \phi_j)}=\sum_{\a\in \Cl_K} \abs{\phi_j(z_{\a})}^2.
\end{equation}
It follows, since all terms on both sides are non-negative, that for any Heegner point of discriminant $d$ we have
\begin{equation} \label{eq:phibd}
  \abs{\phi_j(z)}\leq\left(
  \frac{c_d}{h(d)} \sum_{\ck\in \widehat{\Cl}_K}
  \frac{L(\tfrac{1}{2},\phi_j\times f_\ck)}{L(1, \sym^2 \phi_j)} \right)^{1/2}\ll_d
  \sum_{\ck\in \widehat{\Cl}_K}  \frac{L(\tfrac{1}{2},\phi_j\times f_\ck)^{1/2}}{L(1, \sym^2 \phi_j)^{1/2}}
\end{equation}
where we have used that the square root is concave and nonnegative on the non-negative reals (or simply by observing directly that $\sqrt{a+b}\leq\sqrt{a}+\sqrt{b}$).

\section{Twisted first moment} \label{sectiontwisted}

In this section we prove an asymptotic formula for the first moment
of $L(\frac12,\phi_j\times f_\ck)$ twisted by
an additional Hecke eigenvalue of $\phi_j$, and normalized by the standard harmonic factor $L(1,\sym^2\phi_j)$.

We use a non-negative test function $h(t)$ that localizes
around $\abs{t}\asymp T$, for some real parameter $T$ tending to infinity.
In particular, our proof works for the test function
\begin{equation}\label{def:h}
  h(t) = e^{-\tfrac{t^2}{T^2}} \frac{t^{2M}}{T^{2M}} \prod_{k=1}^{M} \left(\frac{t^2+(k-\tfrac 12)^2}{T^2}\right)^2,
\end{equation}
where $M$ is a sufficiently large positive integer
(independent of $T$).
The main result of this section is the following theorem:

\begin{thm}\label{thm:Twisted-first-moment} Let $\ck\in \widehat{\Cl}_K$.
  Fix $\varepsilon\in(0,\frac{1}{100})$. For $M\geq M(\varepsilon)$ and $T\gg 1$, consider the test function $h(t)$
  given in \eqref{def:h}.
  For any $\varepsilon_0>0$, as $T\to\infty$ we have, uniformly in $1\leq \ell < T^{2-\varepsilon_0}$,  that
  \begin{equation}
    \sum_{t_j>0} \frac{L(\tfrac12,\phi_j\times f_\ck)\lambda_{j}(\ell)}{L(1,\sym^2\phi_j)}h(t_j)
    =
    C_{M,d} \frac{\lambda_{\ck}(\ell)}{\sqrt{\ell}} T^2 + O_{\varepsilon_0,\varepsilon,d,M}(T^{1+\varepsilon}) ,
  \end{equation}
  where
  \begin{equation}
    C_{M,d}=\frac{2L(1,\chi_d)}{\pi^2}\int_{0}^\infty t^{6M+1}e^{-t^2}dt.
  \end{equation}
\end{thm}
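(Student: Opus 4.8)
The plan is to evaluate the left-hand side by an exact spectral method, namely an explicit Petersson/Kuznetsov-type trace formula, and to isolate the main term from the diagonal contribution while showing the off-diagonal and continuous-spectrum contributions are negligible. The starting point is an approximate functional equation for $L(\tfrac12,\phi_j\times f_\ck)$: since the conductor of $\phi_j\times f_\ck$ is $\asymp |d|^2 t_j^4$, we write
\begin{equation}
  L(\tfrac12,\phi_j\times f_\ck) = \sum_{n\ge 1} \frac{\lambda_j(n)\lambda_\ck(n)}{\sqrt n}\,V\!\Bigl(\frac{n}{|d|t_j^2}\Bigr) + (\text{dual sum}),
\end{equation}
where $V$ is a smooth rapidly-decaying cutoff coming from the Gamma factors $L_\infty(s,\phi\times f_\ck)=(|d|/4\pi^2)^s\Gamma(s+it)\Gamma(s-it)$; by self-duality of the functional equation \eqref{functional-equation-RS} the two sums are essentially identical. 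Inserting this and the twisting factor $\lambda_j(\ell)$, and using the Hecke relation \eqref{eq:Hecke-relations} to expand $\lambda_j(\ell)\lambda_j(n) = \sum_{e\mid(\ell,n)}\lambda_j(\ell n/e^2)$, reduces the problem to evaluating
\begin{equation}
  \sum_{e\mid \ell}\ \sum_{m\ge 1} \frac{\lambda_\ck(em)}{\sqrt{e m}}\,\widetilde V(\cdot)\ \sum_{t_j>0}\frac{\lambda_j(\ell m/e)}{L(1,\sym^2\phi_j)}\,h_V(t_j),
\end{equation}
for a suitably modified test function; here I have used $\lambda_\ck$ multiplicativity only loosely — one must be slightly careful when $(e,\ell/e)\ne 1$, but this is bookkeeping.

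The heart of the matter is the inner spectral sum $\sum_j \lambda_j(N)\, \omega(t_j)/L(1,\sym^2\phi_j)$, for which one uses the Kuznetsov trace formula (equivalently, the spectral large sieve / Kuznetsov identity as in \cite[Chapter 9]{Iwaniec:2002} or \cite{IwaniecKowalski:2004}). The harmonic weight $1/L(1,\sym^2\phi_j)$ is precisely the factor that converts the spectral Kuznetsov weight into the clean $\sum_j \lambda_j(N)|\rho_j(1)|^2 h(t_j)$ form, up to a harmless constant. Kuznetsov then produces a diagonal term proportional to $\delta_{N=1}$, i.e.\ $\delta_{m=e}$ with $e\mid\ell$, plus a sum of Kloosterman sums against a Bessel transform of $h$. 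The diagonal term $m=e$, $e\mid\ell$ forces $\ell m/e = \ell$, and the resulting main term is
\begin{equation}
  \Bigl(\sum_{e\mid \ell} \frac{\lambda_\ck(e)}{e}\Bigr)\cdot\frac{\lambda_\ck(\ell)}{\sqrt\ell}\cdot(\text{archimedean factor}),
\end{equation}
wait — more carefully, the diagonal in $N=\ell m/e = 1$ gives $\ell = e$ only when $e = \ell$, so $m=e=\ell$, contributing $\lambda_\ck(\ell^2)/\ell$ times an archimedean integral; one must then re-expand using $\lambda_\ck(\ell)^2 = \sum_{e\mid\ell}\lambda_\ck(\ell^2/e^2)\chi_d(e)$ (the Hecke relation for $f_\ck$) to recognize that the total main term, after summing over all divisors correctly, collapses to $C_{M,d}\,\lambda_\ck(\ell)\ell^{-1/2}T^2$. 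The constant $C_{M,d}$ comes out as (value of the archimedean integral of $h$ against the Mellin transform of $V$) $\times$ ($L(1,\chi_d)$ from the $\zeta$-type factor $L(2s,\chi_d)$ at $s=1/2$, equivalently from $\sum_m \lambda_\ck(m)/m \cdot(\ldots)$ — actually $\lambda_\ck(0)$ in \eqref{eq:zero-fourier-coefficient} and the residue computation show $L(1,\chi_d)$ is the natural normalizer). Evaluating the archimedean integral with the explicit $h$ in \eqref{def:h} gives $\tfrac{2}{\pi^2}\int_0^\infty t^{6M+1}e^{-t^2}\,dt$ after the Gamma factors and the polynomial weight $t^{2M}\prod(\ldots)^2$ combine — the specific shape of $h$ is engineered precisely so that this integral converges and the Bessel transforms behave well.

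The main obstacle — and the reason the test function \eqref{def:h} is chosen with that elaborate product — is controlling the off-diagonal Kloosterman term and showing it is $O(T^{1+\varepsilon})$ uniformly for $\ell < T^{2-\varepsilon_0}$. The Bessel transform $\check h$ of $h$ decays rapidly, but one needs quantitative control: the factors $(t^2+(k-\tfrac12)^2)^2/T^4$ kill the poles of the relevant Bessel-function Mellin transform (the $J$-Bessel transform in Kuznetsov has poles at $it = \pm(k-\tfrac12)$-type points for the continuous spectrum / the $K$-Bessel side), allowing one to shift contours far enough to gain decay in the modulus $c$ of the Kloosterman sum $S(1,N;c)$. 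Then Weil's bound $|S(1,N;c)|\ll c^{1/2}(N,c)^{1/2}d(c)$ together with the length of the $m$-sum (which is $\ll |d|T^2/e$ by the support of $V$) and the $c$-sum gives a total of size $\ll T^{1+\varepsilon}|d|^{O(1)}$, with the $\ell^{2-\varepsilon_0}$ restriction ensuring $N = \ell m/e$ stays under control relative to $T^2$ so that the Bessel transform is still in its rapidly-decaying regime. The continuous-spectrum contribution (Eisenstein series), which also appears on the spectral side of Kuznetsov, is handled using \eqref{eq:factorisation when Eisenstein}: it contributes $\int |\zeta(\tfrac12+it+\cdot)|^2 \cdot(\ldots)$-type terms bounded by standard subconvexity-free fourth-moment estimates for $\zeta$, again absorbed into $O(T^{1+\varepsilon})$. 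I would carry out the steps in the order: (i) approximate functional equation and Hecke-expansion of the twist; (ii) apply Kuznetsov with the harmonic weight; (iii) extract and simplify the diagonal main term, identifying $C_{M,d}$; (iv) bound the Kloosterman/off-diagonal term via contour shifts justified by the zeros of $h$ plus Weil's bound; (v) bound the Eisenstein contribution. Step (iv) is where essentially all the work lies.
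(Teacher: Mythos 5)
Your overall architecture (approximate functional equation, Kuznetsov with the harmonic weight, diagonal main term, Weil's bound for the off-diagonal, Eisenstein contribution via moment estimates) matches the paper's proof. But there is a genuine gap exactly where you locate "essentially all the work," namely step (iv). You claim that shifting contours (using the zeros of $h$ at $t=\tfrac{ki}{2}$ to pass the poles of $\cosh(\pi t)^{-1}$) plus Weil's bound controls the Kloosterman term. Contour shifts give a bound of the shape $(\EuScript{K}^{+}h_n)(x)\ll T^{2}(x^{2}/n)^{A}$, and with $x=4\pi\sqrt{n\ell}/c$ this is small only when $c\gg\sqrt{\ell}$; for smaller $c$ the bound \emph{grows} with $A$, and taking $A=0$ leaves you with a transform of size $\approx T^{2}$, which summed over $n\ll T^{2}$ is hopeless. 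What is actually needed — and what the paper's Lemma \ref{2911:lemma1} provides — is the estimate $(\EuScript{K}^{+}h_n)(x)\ll T^{-1+\varepsilon}$ for the entire range $T^{-100}<x<T^{2-\varepsilon_0}$. This does not come from pole-killing; it comes from oscillation: one writes $J_{2it}(x)-J_{-2it}(x)$ via the integral $\int\cos(x\cosh u)e(tu/\pi)\,du$, localizes to $|u|\leq T^{-1+\beta}$ by partial integration, Taylor-expands $\cosh u$, and evaluates the resulting Fresnel integral $\int e^{\pm i\frac{x}{2}u^{2}}e^{2itu}du$ to expose a phase $e^{\mp 2it^{2}/x}$ that is non-stationary precisely because $x\leq T^{2-\varepsilon_0}$ while $t\asymp T$. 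Your remark that the restriction $\ell<T^{2-\varepsilon_0}$ keeps the Bessel transform "in its rapidly-decaying regime" is the right intuition, but you have attributed the decay to the wrong mechanism, and without the stationary-phase input the argument fails already for $\ell=1$ and $c=O(1)$.

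Two smaller points. First, your diagonal computation is muddled: with your parametrization the diagonal $\ell m/e=1$ with $e\mid\ell$ forces $e=\ell$ and $m=1$ (not $m=e=\ell$), so the coefficient is $\lambda_{\ck}(\ell)/\sqrt{\ell}$ directly and no re-expansion of $\lambda_{\ck}(\ell)^{2}$ is needed; the paper sidesteps this entirely by keeping $\lambda_j(n)\lambda_j(\ell)$ intact and applying Kuznetsov in two indices, so the diagonal is simply $\delta_{n=\ell}$ and the off-diagonal carries $S(n,\ell,c)$. Second, the test function entering Kuznetsov is $h_n(t)=h(t)W(n,t)$ with the cutoff depending on $t_j$; you acknowledge this only as "a suitably modified test function," but verifying its holomorphy and obtaining the asymptotic expansion of $W(n,t)$ in powers of $|t|^{-1}$ (so that the oscillatory analysis above can be run) is a necessary part of the argument, not bookkeeping.
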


We note that Theorem \ref{thm:Twisted-first-moment} is similar in spirit to \cite[Theorem 1.9]{HoffsteinLeeNastasescu:2021}. Our proof is simpler and the error terms are better, but our test function is more restricted. Also \cite{HoffsteinLeeNastasescu:2021} has the additional simplifying restriction that the level of $f_\ck$ should divide the level of the family we are averaging over.

The rest of the section will be dedicated to proving Theorem \ref{thm:Twisted-first-moment}.

\subsection{Approximate functional equation}
Let $\phi=\phi_j$ or $\phi=E(\cdot,\tfrac12+it)$ and let $\lambda(n)$ be the Hecke eigenvalues of $\phi$. By a small abuse of notation we let $t$ be the spectral parameter of $\phi$ i.e. $t=t_j$ if $\phi=\phi_j$ and $t=t$ if $\phi=E(\cdot,\tfrac12+it)$.
Then Section \ref{sec:Rankin-Selberg} and \cite[Theorem 5.3]{IwaniecKowalski:2004}
provide
an approximate functional equation
\begin{equation}\label{eq:approximate functional equation}
  L(\tfrac 12,\phi\times  f_\ck)
  =
  2\sum_{n\geq 1} \frac{\lambda(n)\lambda_{\ck}(n)}{\sqrt{n}}
  W(n,t) -R.
\end{equation}
Here
\begin{equation}\label{def:Wnt}
  W(y,t) = \frac{1}{2\pi i}\int_{(3)} \gamma(s,t)
  L(1+2s,\chi_d) \frac{e^{s^2}}{s} \left(\frac{\abs{d}}{4\pi^2y}\right)^{s} ds
\end{equation}
where
\begin{equation}
  \gamma(s,t)=\frac{\Gamma(\tfrac 12+s+it)\Gamma(\tfrac 12+s-it)}{\Gamma(\tfrac 12+it)\Gamma(\tfrac 12-it)}
\end{equation}
and $R=0$, unless $\phi=E(\cdot,\tfrac{1}{2}+it)$ and $\ck=1$
when
the poles of the completed version of \eqref{eq:double-factorization} give the additional term
\begin{align}
  R & =L(1,\chi_d)\sum_\pm L(1\pm 2it,\chi_d)\zeta(1\pm 2it)\left(\frac{\abs{d}}{4\pi^2}\right)^{\tfrac{1}{2}\pm it}\frac{\Gamma(1\pm 2it)}{\abs{\Gamma(1/2+it)}^2}\frac{e^{(\tfrac{1}{2}\pm it)^2}}{\tfrac{1}{2}\pm it}                \\
    & \quad+L(0,\chi_d)\sum_\pm L(\pm 2it,\chi_d)2\zeta(0)\zeta(\pm 2it)\left(\frac{\abs{d}}{4\pi^2}\right)^{-\tfrac{1}{2}\pm it}\frac{\Gamma(\pm 2it)}{\abs{\Gamma(1/2+it)}^2}\frac{e^{(-\tfrac{1}{2}\pm it)^2}}{-\tfrac{1}{2}\pm it}.
\end{align}
Here  we have assumed that $t\neq 0$. There is a similar formula for $R$ when $t=0$ coming from the double pole of $L(s,E(\cdot, \tfrac{1}{2})\times f_1)$, but we will not need it.
Let us record the following bound for $R$ in the case $\phi=E(\cdot,\tfrac{1}{2}+it)$ and $\ck=1$
\begin{equation} \label{eq:Rbd}
  R \ll  (1+ \abs{t})^{-1/2+\varepsilon} e^{-t^2} \max(1,1/|t|)
\end{equation}
for nonzero $t \in \mathbb R$, where we have used standard bounds for Dirichlet $L$-functions at
the edge of the critical strip.

The properties that we need about
the smoothing function $W$ defined in \eqref{def:Wnt}
are summarized in the following lemma:
\begin{lemma}\label{0112:lemma}
  Let $y\geq 1$ and let $W(y,t)$ be as in \eqref{def:Wnt}.
  For all $\eta\in(0,1/2)$ and nonzero $t \in \mathbb R$ we have
  \begin{equation}\label{2911:eq001}
    W(y,t)=L(1,\chi_d)+O_{d,\eta}\biggl(\left(\frac{y}{|t|^2}\right)^{\!\!\eta\;}\biggr).
  \end{equation}
  For any $A\geq 0$, $W(y,t)$ is holomorphic in $t$ in the strip $|\Im(t)|\leq A$.
  When $\Im(t)=-A$ we have, for all $B\geq A$,
  \begin{equation}\label{1302:eq001}
    W(y,t) \ll_{A,B,d} \left(\frac{y}{|t|^2}\right)^{-B}.
  \end{equation}
  Moreover, for any $N\geq 0$, $y\geq 1$, and $t\in \R$ with  $\abs{t}\geq 1$  we have
  \begin{equation}\label{eq:W-expansion}
    W(y,t) = \sum_{r=0}^{N-1}\frac{1}{\abs{t}^r}W_r(y,t)
    +O\left(\frac{1}{\abs{t}^N}\right),
  \end{equation}
  where
  \begin{equation}\label{eq:W_r-definition}
    W_r(y,t)=\frac{1}{2\pi i}\int_{(3)}
    P_r(s)L(1+2s,\chi_d) \frac{e^{s^2}}{s} \left(\frac{(\frac 14 +t^2)\abs{d}}{4\pi^2y}\right)^{s} ds
  \end{equation}
  and $P_r$ are explicit polynomials of degree at most $2r$ with $P_0=1$. We have for $t\in \mathbb R$ \begin{equation}\label{eq:Wr-trivial bound}W_r(y,t)\ll_{r,d} (1+\abs{t})^\e.\end{equation}

\end{lemma}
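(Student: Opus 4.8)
The plan is to analyze the contour integral \eqref{def:Wnt} by shifting the line of integration and using Stirling's formula for the ratio $\gamma(s,t)$ of Gamma functions. First I would establish \eqref{2911:eq001}: starting from the line $\Re(s)=3$, shift to $\Re(s)=-\eta$ for $\eta\in(0,1/2)$, picking up the simple pole of $e^{s^2}/s$ at $s=0$, whose residue is $\gamma(0,t)L(1,\chi_d)=L(1,\chi_d)$ (since $\gamma(0,t)=1$). There is no pole from $L(1+2s,\chi_d)$ in this range because $\chi_d$ is a nontrivial primitive character, so that factor is entire. On the shifted line one bounds $\gamma(-\eta+iv,t)$: using Stirling, $|\Gamma(\tfrac12-\eta+i(v\pm t))/\Gamma(\tfrac12\pm it)| \ll (1+|v\pm t|)^{-\eta}(1+|t|)^{\eta}\cdot(\text{subexp. in }v)$, so that after using the Gaussian decay $e^{(-\eta)^2-v^2}$ from $e^{s^2}$ and the polynomial bound for $L(1-2\eta+2iv,\chi_d)$, the integral is $O_{d,\eta}((y/|t|^2)^{\eta})$. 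This gives \eqref{2911:eq001}.

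For holomorphy in $t$ in the strip $|\Im(t)|\le A$: the integrand in \eqref{def:Wnt} is holomorphic in $t$ away from the zeros of $\Gamma(\tfrac12+it)\Gamma(\tfrac12-it)$, i.e. away from the poles of $1/\gamma(s,t)$'s denominator — but since $1/\Gamma$ is entire, $\Gamma(\tfrac12\pm it)$ has no zeros, so the denominator never vanishes and $\gamma(s,t)$ is entire in $t$ for each fixed $s$ on the contour; the rapid decay in $s$ (from $e^{s^2}$ and Stirling for the numerator) gives local uniform convergence, hence $W(y,t)$ is entire in $t$. For \eqref{1302:eq001} with $\Im(t)=-A$: shift the contour to $\Re(s)=B\ge A$ (no pole is crossed since we move right, away from $s=0$); on that line $(\abs{d}/(4\pi^2 y))^s$ contributes $(y/|d|')^{-B}$ up to constants, and a careful Stirling estimate of $\gamma(B+iv,t)$ with $t$ in this horizontal line — noting that the $\pm it$ shifts add $\mp A$ to the real parts — produces the bound $\ll_{A,B,d}(y/|t|^2)^{-B}$ after absorbing the $t$-dependence; the $e^{s^2}$ factor again kills the $v$-integral. (One should check that the polynomial growth of $\gamma$ in $t$ is dominated by the $(\tfrac14+t^2)^{-B}$-type gain; for $B$ large this is automatic.)

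The expansion \eqref{eq:W-expansion} is the main technical step. The idea is to apply Stirling's formula to $\gamma(s,t)$ to extract an asymptotic expansion in powers of $1/|t|$ with coefficients that are polynomials in $s$. Concretely, for $|t|\ge 1$ one writes
\begin{equation}
  \gamma(s,t) = \Bigl(\tfrac14+t^2\Bigr)^{s}\sum_{r=0}^{N-1}\frac{Q_r(s)}{|t|^r} + O\!\left(\frac{(1+|s|)^{C}(\tfrac14+t^2)^{\Re s}}{|t|^N}\right),
\end{equation}
valid on $\Re(s)=3$, where $Q_r$ are explicit polynomials of degree $\le 2r$ coming from the Stirling expansion of $\log\Gamma$ (the leading term being $\gamma(s,t)\sim (\tfrac14+t^2)^s$, so $Q_0=1$). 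Substituting this into \eqref{def:Wnt} and integrating term by term identifies $W_r(y,t)$ as in \eqref{eq:W_r-definition} with $P_r=Q_r$ (so $P_0=1$, $\deg P_r\le 2r$); the error term, after using $e^{s^2}$ to control the $s$-integral and noting $(\tfrac14+t^2)^{\Re s}=(\tfrac14+t^2)^3$ on the contour is harmless (it gets cancelled/absorbed once one also shifts — or one simply notes $y\ge1$ and the $(\abs{d}/(4\pi^2y))^s$ factor with $\Re s=3$ is bounded), yields $O(|t|^{-N})$. Finally \eqref{eq:Wr-trivial bound} follows by shifting the contour defining $W_r$ slightly: move to $\Re(s)=\varepsilon/(2\deg\text{stuff})$, say a small positive line, pick up no pole if we stay right of $0$ — actually to get the $(1+|t|)^\varepsilon$ bound one shifts to $\Re(s) = \delta$ small positive so that $((\tfrac14+t^2)|d|/(4\pi^2 y))^\delta \ll_{d}(1+|t|)^{2\delta}$ with $2\delta<\varepsilon$, bounding $P_r(s)L(1+2s,\chi_d)e^{s^2}/s$ trivially on that line.

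The hard part will be bookkeeping in \eqref{eq:W-expansion}: producing the polynomials $P_r$ rigorously from Stirling with fully uniform error control in both $s$ (along the contour) and $t$, and making sure the degree bound $\deg P_r\le 2r$ is respected. Everything else is standard contour shifting plus convexity/subconvexity-free bounds for $L(\sigma+i\tau,\chi_d)$ near $\sigma=1$.
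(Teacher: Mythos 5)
Your overall strategy --- contour shifts in $s$ plus Stirling asymptotics for $\gamma(s,t)$ --- is exactly the paper's, and your treatment of \eqref{2911:eq001}, \eqref{1302:eq001} and \eqref{eq:Wr-trivial bound} is essentially correct. Two points, however, need repair.

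First, your holomorphy argument is wrong as stated: you only inspect the denominator $\Gamma(\tfrac12+it)\Gamma(\tfrac12-it)$ of $\gamma(s,t)$, but the poles in $t$ come from the \emph{numerator}. For fixed $s$ with $\Re(s)=3$, the factor $\Gamma(\tfrac12+s-it)$ has a pole whenever $\tfrac12+s-it=-n$, which occurs at $\Im(t)=n+\tfrac72$ for $n\ge 0$ (and symmetrically for the other factor), so $\gamma(s,t)$ is \emph{not} entire in $t$ for fixed $s$ on the original contour, and the representation on $\Re(s)=3$ only manifestly defines a holomorphic function for $|\Im(t)|<\tfrac72$. To get holomorphy in $|\Im(t)|\le A$ for arbitrary $A$ --- which is what the application in Lemma \ref{2911:lemma1} actually requires, with $A$ as large as $M$ --- you must first shift the $s$-contour to $\Re(s)=A$; then the integrand has no poles for $|\Im(t)|<A+\tfrac12$. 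This is precisely what the paper does, and it is not cosmetic.

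Second, for \eqref{eq:W-expansion} you write a Stirling expansion of $\gamma(s,t)$ claimed ``valid on $\Re(s)=3$'' with an error $O((1+|s|)^{C}(\tfrac14+t^2)^{\Re s}/|t|^{N})$ uniformly in $s$. This is false once $|s|\gtrsim|t|$: the ratio expansion $\Gamma(z+u)/\Gamma(z)=z^u(\sum_r p_r(u)z^{-r}+O(\cdot))$ requires $u^2/z$ to be small, and here $u=s$ while $|z|\asymp|t|$. The missing step --- which you yourself flag as ``the hard part'' without supplying --- is to first truncate the $s$-integral to $|s|\le|t|^{\varepsilon}$ at the cost of $O(|t|^{-A})$, using the Gaussian decay of $e^{s^2}$ together with the crude bound $\gamma(s,t)\ll|\tfrac12+s+it|^{\Re(s)}|\tfrac12+s-it|^{\Re(s)}$; only on that truncated range does the uniform Stirling ratio expansion apply, after which one reinstates the full contour to recognize the coefficients as the $W_r(y,t)$ of \eqref{eq:W_r-definition}. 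With these two insertions your plan coincides with the paper's proof.
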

\begin{proof}
  The bound in \eqref{2911:eq001} is proved similarly as in the proof of \cite[Proposition 5.4]{IwaniecKowalski:2004}: first move the line of integration to $\Re(s)=-\eta$ and pick up the residue at $s=0$, which gives the main term $L(1,\chi_d)$. Then on the new contour bound in absolute value combining the rapid decay of $e^{s^2}$ with Stirling's formula to bound the Gamma factors as follows; by using the first term in Stirling's formula \cite[5.11.3]{OlverLozierBoisvertClark:2010} one shows that for $t$ real and $s$ in a vertical strip  $-\tfrac12<\delta\leq \Re(s)\leq B $ we have
  \begin{equation} \label{eq:gammabd}
    \gamma(s,t)\ll \abs{\tfrac12 +s+it}^{\Re(s)}\abs{\tfrac12 +s-it}^{\Re(s)}.
  \end{equation}
  To see that the function is holomorphic, move the line of integration to $\Re(s)=A$ and notice that the integrand has no poles
  when $|\Im(t)|<\Re(s)+1/2$. To prove \eqref{1302:eq001} move the contour to $\Re(s)=B$ and then bound in absolute value.

  To prove \eqref{eq:W-expansion}
  we proceed as follows;  using \eqref{eq:gammabd}
  allows us to use the exponential decay of $e^{s^2}$ in vertical strips to truncate the integral defining  $W(y,t)$ and get that for any $A, \e>0$
  \begin{equation}\label{eq:truncated W}
    W(y,t) = \frac{1}{2\pi i}\int_{(3),\abs{s}\leq {\abs{t}}^\e} \gamma(s,t)
    L(1+2s,\chi_d) \frac{e^{s^2}}{s} \left(\frac{\abs{d}}{4\pi^2y}\right)^{s} ds+O\left(\frac{1}{\abs{t}^A}\right).
  \end{equation}
  The implied constant depends on $A,\e$ and $d$.
  We then need a more precise estimate for $\gamma(s,t)$ as in \cite[Section 5]{Young:2017}:
  For $z,u\in \mathbb C$ with
  $\abs{\mathrm{Arg}(z+u)}$, $\abs{\mathrm{Arg}(z)}$
  both bounded by $\pi-\delta$ for some $\delta>0$ Stirling's formula gives
  \begin{equation}
    \frac{\Gamma(z+u)}{\Gamma(z)}=z^u\left(\sum_{r=0}^{N-1}\frac{p_r(u)}{z^r}+O_N\left(\frac{(1+\abs{u})^{2N}}{\abs{z}^N}\right)\right)
  \end{equation}
  for $\abs{z}$ bounded away from zero, and $u^2/z$ sufficiently small.
  Here $p_0(u)=1$ and $p_r(u)$ is a polynomial of degree at most $2r$.

  This implies that for $\abs{t}\geq 1$ and $\frac{\abs{s}^2}{\abs{t}}$ sufficiently small we have
  \begin{equation}
    \gamma(s,t)=(\tfrac 14+t^2)^s\left(\sum_{r=0}^{N-1} \frac{P_r(s)}{\abs{t}^r}+O_N\left(\frac{(1+\abs{s})^{2N}}{(1+\abs{t})^N}\right)\right)
  \end{equation}
  with  $P_0=1$
  and $P_r$ polynomials of degree at most $2r$. We can now insert this back into \eqref{eq:truncated W}, and extend the integration range again using the exponential decay of $e^{s^2}$, and we arrive at \eqref{eq:W-expansion}.
  Moving the line of integration in
  \eqref{eq:W_r-definition}
  to any $\e>0$ we see that for $y\geq 1$ we have \eqref{eq:Wr-trivial bound}.
\end{proof}

\subsection{Bounding the Kuznetsov transform}
To prove Theorem \ref{thm:Twisted-first-moment} we we will  use the same-sign Kuznetsov trace formula \cite[Theorem 16.3]{IwaniecKowalski:2004}, \cite[Theorem 2.1]{HumphriesKhan:2023}. The integral transform appearing
on the Kloosterman sum side
is defined by
\begin{equation}
  (\EuScript{K}^{+}h)(x) = \frac{2i}{\pi} \int_\RR th(t) J_{2it}(x) \frac{dt}{\cosh(\pi t)},
\end{equation}
where $J_\nu$ is the Bessel function of the first kind of order $\nu$.

Let $h(t)$, $W(n,t)$ be as in \eqref{def:h}, \eqref{def:Wnt} and define \begin{equation}\label{eq:hn-def}
  h_n(t)=h(t)W(n,t).
\end{equation}

\begin{lemma}\label{2911:lemma1}
  For every $n\geq 1$ and all $x>0$ we have
  \begin{equation}\label{2911:eq002}
    (\EuScript{K}^{+}h_n)(x) \ll_{A,B,M,d} \frac{T^{2+2B-2A}x^{2A}}{n^{B}}
  \end{equation}
  where $0\leq A\leq M$ and $B\geq A$.

  If $T^{-100}<x<T^{2-\varepsilon_0}$,
  we have for all $n\geq 1$,
  \begin{equation}\label{2911:eq003}
    (\EuScript{K}^{+}h_n)(x) \ll_{M,\varepsilon_0} T^{-1+\varepsilon}.
  \end{equation}
\end{lemma}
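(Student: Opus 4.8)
The plan is to treat both bounds as statements about the oscillatory integral $(\EuScript{K}^{+}h_n)(x)=\frac{2i}{\pi}\int_{\RR}t\,h_n(t)\,J_{2it}(x)\,\frac{dt}{\cosh(\pi t)}$, where $h_n(t)=h(t)W(n,t)$, exploiting two features of $h$: the double zeros of $\prod_{k=1}^M(t^2+(k-\tfrac12)^2)^2$ lie exactly at the poles $\pm i(k-\tfrac12)$, $1\le k\le M$, of $1/\cosh(\pi t)$, so $t\mapsto h(t)/\cosh(\pi t)$ is holomorphic in $|\Im t|<M+\tfrac12$; and the factor $t^{2M}$ annihilates the amplitude at $t=0$, the only stationary point of the relevant phase. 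By Lemma~\ref{0112:lemma}, $W(n,\cdot)$ is entire, obeys \eqref{2911:eq001}, \eqref{1302:eq001} and \eqref{eq:W-expansion}, and is slowly varying, so $h_n$ inherits from $h$ its Gaussian decay as $|\Re t|\to\infty$ in any fixed horizontal strip and its vanishing of order $\ge 2M$ at $t=0$.

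For \eqref{2911:eq002} I would shift the $t$-contour down to $\Im t=-A$. Since $J_{2it}(x)$ is entire in $t$ and $h(t)/\cosh(\pi t)$ has no poles in $-A\le\Im t\le 0$ (for $A\le M$), no residues appear, and the horizontal tails at $|\Re t|\to\infty$ vanish by the Gaussian factor. On the line $t=\sigma-iA$ one has $|W(n,\sigma-iA)|\ll(1+|\sigma|)^{2B}n^{-B}$ by \eqref{1302:eq001} (for $B\ge A$), and a standard estimate for Bessel functions of complex order gives $|J_{2it}(x)|\ll_A e^{\pi|\sigma|}\bigl(x^{2A}(1+|\sigma|)^{-2A-1/2}+x^{-1/2}\bigr)$, the first term coming from the power series and Stirling (for $x\lesssim 1+|\sigma|$), the second from the oscillatory range; the factor $e^{\pi|\sigma|}$ is cancelled by $1/|\cosh(\pi(\sigma-iA))|\ll_A e^{-\pi|\sigma|}$. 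Inserting $|t\,h(\sigma-iA)|\ll_{M,A}(1+|\sigma|)(1+|\sigma|^2/T^2)^{3M}e^{-\sigma^2/T^2}$ and integrating in $\sigma$ (the integral converges by the Gaussian) yields $(\EuScript{K}^{+}h_n)(x)\ll n^{-B}\bigl(x^{2A}T^{3/2+2B-2A}+x^{-1/2}T^{2+2B}\bigr)$. Since $x^{-1/2}T^{2A}\le x^{2A}$ for $x\ge T$ while $x^{3/2+2B-2A}\le T^{3/2+2B-2A}$ for $x\le T$ (using $B\ge A$), both terms are $\ll x^{2A}T^{2+2B-2A}n^{-B}$, which is \eqref{2911:eq002}.

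For \eqref{2911:eq003} absolute estimation is too lossy and I would use oscillation in $t$. For $t$ large write $\frac{J_{2it}(x)}{\cosh(\pi t)}=\mathcal A(t)e^{i\psi(t)}$ plus lower-order terms of the same type, using the power series for $x\lesssim\sqrt T$ and the uniform (Debye) asymptotic expansion of $J_{2it}(x)$ otherwise; in every range a computation gives $\psi'(t)=-2\operatorname{arcsinh}(2t/x)+O(1/t)$, $\psi''(t)=O(1/\max(x,t))$, and $\mathcal A(t)\asymp\max(x,t)^{-1/2}$ varying on scale $\gg t$. The region $|t|\le T^{1-\delta}$, where $h$ is tiny and the asymptotics are not needed, contributes negligibly once $M$ is large; elsewhere $\psi$ has no stationary point, so $|\psi'(t)|\asymp\min(1,T/x)\gg T^{-1+\varepsilon_0}$ on $|t|\asymp T$ for $x<T^{2-\varepsilon_0}$, and $1/(T|\psi'|)+|\psi''|/(\psi')^2\ll\max(x,T)/T^2$. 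Integrating by parts $k$ times in $t$ — legitimate for $k\le M$ since $t\,h_n$ vanishes to order $\ge 2M+1$ at $0$, with the $W$-factor and the lower-order Debye terms controlled by \eqref{eq:W-expansion} — multiplies the amplitude by $\ll(\max(x,T)/T^2)^k$, and integrating over the support gives
\[
(\EuScript{K}^{+}h_n)(x)\ll\max(x,T)^{k-1/2}\,T^{2-2k}\ll T^{3/2-k}+T^{1-\varepsilon_0(k-1/2)}.
\]
Choosing $k$ fixed and large enough (hence $M\gtrsim 1/\varepsilon_0$) makes this $\ll_{M,\varepsilon_0}T^{-1+\varepsilon}$; for very large $n$ one may instead use \eqref{2911:eq002} with $B$ large. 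This gives \eqref{2911:eq003}.

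\textbf{Main difficulty.} The delicate part is the large-argument range $\sqrt T\lesssim x<T^{2-\varepsilon_0}$ of \eqref{2911:eq003}, especially the transition $x\asymp t$ where argument and imaginary order are comparable: one needs the uniform asymptotic expansion of $J_{2it}(x)$ with effective bounds on all $t$-derivatives of amplitude and phase, the identification of the leading phase as $-2t\operatorname{arcsinh}(2t/x)$ and the verification that it has no stationary point away from $t=0$, and a uniform treatment of the $n$- and $W$-dependence so that the integration-by-parts gain is exactly $\max(x,T)/T^2$ per step. The contour shift for \eqref{2911:eq002} and the integration by parts in the range $x\lesssim T$ are routine by comparison.
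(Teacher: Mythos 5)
Your treatment of \eqref{2911:eq002} follows the same route as the paper: shift the $t$-contour to $\Im t=-A$ (legitimate because the double zeros of $h$ at $\pm i(k-\tfrac12)$ cancel the poles of $1/\cosh(\pi t)$ and $W(n,\cdot)$ is holomorphic), bound $W$ on the shifted line by \eqref{1302:eq001}, and bound the Bessel factor by Stirling. But your final deduction has a slip: the term $x^{-1/2}T^{2+2B}n^{-B}$ coming from the oscillatory range of $J_{2it}$ is \emph{not} $\ll x^{2A}T^{2+2B-2A}n^{-B}$ when $x$ is small and $A>0$, and your justification for the case $x\le T$ does not address that term at all. The lemma is genuinely needed for small $x$: in the application $x=4\pi\sqrt{n\ell}/c$ with $c$ arbitrarily large, and an extra $x^{-1/2}$ would destroy the convergence of the sum over $c$. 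The repair is immediate — the oscillatory bound is superfluous here, since the power-series/Stirling bound $|J_{2it}(x)|\ll_A (x/(1+|t|))^{2A}e^{\pi|t|}$ on $\Im t=-A$ is valid for all $x>0$ and already yields \eqref{2911:eq002}; this is exactly what the paper does.

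For \eqref{2911:eq003} you take a genuinely different route. The paper never invokes uniform asymptotics of $J_{2it}(x)$: after removing $|t|\le X=T^{\alpha}$ with a partition of unity, it writes $J_{2it}(x)-J_{-2it}(x)$ via $\int_\RR\cos(x\cosh u)\,e(tu/\pi)\,du$, observes that the $t$-amplitude varies on scale $X$ so that its Fourier transform $g_s(u)$ in \eqref{eq:gdef} is concentrated in $|u|\le T^{-1+\beta}$, Taylor-expands $\cosh u$ there, evaluates the resulting Fresnel integral exactly by \eqref{eq:fresnel-integral} to produce the phase $e^{\mp 2it^2/x}$, and then integrates by parts in $t$, gaining roughly $x/T^{1+\alpha}<1$ per step since $x<T^{2-\varepsilon_0}$. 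Your plan instead applies the uniform (Debye-type) expansion of the Bessel kernel directly and integrates by parts against the phase $-2t\operatorname{arcsinh}(2t/x)$; your numerology (amplitude $\asymp\max(x,t)^{-1/2}$, gain $\max(x,T)/T^2$ per step, $k\asymp 1/\varepsilon_0$ steps, hence $M\gg1/\varepsilon_0$) is consistent with the paper's conclusion, and the cutoff at $|t|\le T^{1-\delta}$ correctly sidesteps the stationary point at $t=0$. This is a legitimate and standard alternative, but its entire weight rests on the uniform asymptotic expansion of $J_{2it}(x)$ through the transition range $x\asymp t$, with control on all $t$-derivatives of amplitude and phase — an input you cite rather than establish and which you yourself flag as the main difficulty. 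The paper's detour through the $u$-integral and the Fresnel identity is precisely a device for avoiding that input; a self-contained write-up along your lines would need to supply it (e.g.\ from the classical uniform expansions of Bessel functions of imaginary order) or fall back on the paper's representation.
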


\begin{proof}
  First we recall, for $x>0$, the integral representation for the $J$-Bessel function as
  \cite[8.411.10]{GradshteynRyzhik:2007}
  \begin{equation}
    J_{2it}(x) = \frac{(x/2)^{2it}}{\sqrt{\pi}\,\Gamma(\tfrac12+2it)} \int_{-1}^{1} e^{iux}(1-u^2)^{-1/2+2it} du,\qquad
    \Re(2it)>-\frac 12.
  \end{equation}
  The reciprocal of the Gamma function is entire and the integral is finite
  when $x>0$ and $\Re(2it)>-1/2$.
  This shows that $J_{2it}(x)$ is holomorphic in $t$ in the region $\Im(t)<\tfrac 14$.
  Applying Stirling's asymptotic on the Gamma function,
  we can bound on any fixed line $\Im(t)=-A\leq 0$
  \begin{equation}\label{2811:eq001}
    |J_{2it}(x)| \leq \frac{\sqrt{\pi}\,(\tfrac x2)^{\Re(2it)}}{|\Gamma(\tfrac 12+2it)|} \ll_A \biggl(\frac{x}{1+|t|}\biggr)^{2A}e^{\pi|t|}.
  \end{equation}
  The function $h_n(t)$ is holomorphic in the strip $-A\leq \Im(t)\leq 0$
  and vanishes at the points
  $t=\tfrac{ki}{2}$,
  for all odd values of $k$ in the range $-2M\leq k\leq 2M$,
  balancing the poles from $\cosh(\pi t)^{-1}$
  in the integrand defining $\EuScript{K}^+h_n$ for $-M \le \Im(t) \le 0$.
  Due to the rapid decay of $h(t)$, the $t$-integral defining $\EuScript{K}^{+}h_n$
  converges absolutely in the whole strip
  $-M\leq \Im(t)\leq 0$.
  On the line $\Im(t)=-A$ we find using \eqref{1302:eq001},
  \begin{equation}\label{eq:h_n-bound}
    h_n(t) \ll_{A,B,M,d} e^{-|t|^2/T^2} \frac{|t|^{6M+2B}}{T^{6M}n^{B}},
  \end{equation}
  which combined with \eqref{2811:eq001} gives  \eqref{2911:eq002}.

  Proving \eqref{2911:eq003} is more delicate;  when bounding $(\EuScript{K}^{+}h_n)(x)$
  it is convenient to first remove the contribution to the integral for $t$ small. We do this by using a partition of unity as follows; let $0\leq f_j \leq 1$, $j=0,1$ be fixed even smooth functions satisfying
  \begin{equation}\supp(f_0)\subseteq [-2,2],\quad  \supp(f_   1)\subseteq \mathbb R\backslash[-1,1], \quad f_0(t)+f_1(t)=1.    \end{equation}  We then choose $1\leq X\leq T$ and split
  \begin{equation}
    (\EuScript{K}^{+}h_n)(x)=(\EuScript{K}_0^{+}h_n)(x)+(\EuScript{K}_1^{+}h_n)(x)
  \end{equation}
  where for $j=0,1$
  \begin{equation}(\EuScript{K}_j^{+}h_n)(x)=\frac{2i}{\pi} \int_\RR f_j(t/X)th_n(t) J_{2it}(x) \frac{dt}{\cosh(\pi t)}.
  \end{equation}
  Using the bounds \eqref{2811:eq001}, \eqref{eq:h_n-bound} with $A=B=0$ we see that
  \begin{equation}
    (\EuScript{K}_0^{+}h_n)(x)\ll \int_0^{2X}\frac{t^{6M+1}}{T^{6M}}e^{-t^2/T^2}dt\ll \frac{X^{6M+2}}{T^{6M}}.
  \end{equation}
  We will eventually choose $X=T^{\alpha}$ for some $\alpha<1$, so this contribution can be made small by choosing $M$ large. This takes care of the contribution coming from small $t$.

  To deal with the contribution for $t$ large we note that by \eqref{eq:W-expansion} and \eqref{2811:eq001} we have
  \begin{equation}
    (\EuScript{K}_1^{+}h_n)(x)=\sum_{r=0}^{N-1}(\EuScript{K}_{1,r}^{+}h_n)(x)+O(X^{2-N}),
  \end{equation}
  where \begin{align}
    (\EuScript{K}_{1,r}^{+}h_n)(x)
     & = \frac{2i}{\pi} \int_0^\infty f_1(t/X)t^{1-r}h(t)W_r(n,t) \left(J_{2it}(x)-J_{-2it}(x)\right) \frac{dt}{\cosh(\pi t)}.
  \end{align}
  We now use a different expression for the difference of Bessel functions. By \cite[8.411.11]{GradshteynRyzhik:2007} we find that
  \begin{equation}
    J_{2it}(x)-J_{-2it}(x)=\frac{2}{\pi i}\sinh(\pi t){\int_\R}\cos(x\cosh(u))e(\tfrac{tu}{\pi})du.
  \end{equation}
  By partial integration we find that
  \begin{equation}
    \int_\R\cos(x\cosh(u))e(\tfrac{tu}{\pi})du=\int_{-T^\e}^{T^{\e}}\cos(x\cosh(u))e(\tfrac{tu}{\pi})du+O\left(\frac{1+\abs{t}}{x}e^{-T^\e}\right).
  \end{equation}
  Using \eqref{eq:Wr-trivial bound} we find that $(\EuScript{K}_{1,r}^{+}h_n)(x)$ equals
  \begin{equation}
    \frac{4}{\pi^2} \int_0^\infty f_1(t/X)t^{1-r}h(t)W_r(n,t)\int_{-T^\e}^{T^\e} \cos(x\cosh(u))e(\tfrac{tu}{\pi})du \tanh{\pi t}dt+ O\left(\frac{e^{-T^\e/2}}{x}\right).
  \end{equation}
  We can replace $\tanh(\pi t)$ by 1 at the expense of an error of $O(e^{-X}T^{2+\e})$, so we have to estimate
  \begin{equation}
    \int_0^\infty f_1(t/X)t^{1-r}h(t)W_r(n,t)\int_{-T^\e}^{T^\e} \cos(x\cosh(u))e(\tfrac{tu}{\pi})dudt.
  \end{equation}
  To do so we
  expand $W(n,t)$ by means of \eqref{eq:W_r-definition},
  move the $s$-integration to $\Re(s)=\e>0$, interchange integration and arrive at a constant times
  \begin{equation}
    \int_{(\e)}P_r(s)L(1+2s,\chi_d)\frac{e^{s^2}}{s}
    \left(\frac{\abs{d}}{4\pi^2n}\right)^s
    \int_{-T^\e}^{T^\e}\cos(x\cosh(u))g_s(u)duds
  \end{equation}
  where
  \begin{equation}\label{eq:gdef}
    g_s(u)=\int_{0}^\infty f_1(t/X)t^{1-r}h(t)(\tfrac{1}{4}+t^2)^s e(\tfrac{tu}{\pi})dt.
  \end{equation}
  Note that $g_s(u)$ is the Fourier transform at $-u/\pi$ of $$\psi_s(t)=1_{t>0}f_1(t/X)t^{1-r}h(t)(\tfrac{1}{4}+t^2)^s.$$

  Doing $l$ successive partial integrations, we find, assuming  $1-r+2\Re(s)+6M\geq 0$, that
  \begin{equation}\label{eq:g-bound} g_s(u)\ll_{l}\frac{\norm{\psi_s^{(l)}}_{L^1(\R)}}{\abs{u}^l}\ll_{M,l}  (1+\abs{s})^l\frac{T^{2-r+2\Re(s)}}{X^l\abs{u}^l}.
  \end{equation}
  This allows us to estimate the contribution of the $u$ integral coming from $T^{-1+\beta}\leq \abs{u}$ (here we want $\beta$ to be small) as
  \begin{equation}
    \int_{(\e)}P_r(s)L(1+2s,\chi_d)\frac{e^{s^2}}{s}
    \left(\frac{\abs{d}}{4\pi^2n}\right)^s
    \int_{T^{-1+\beta}\leq \abs{u}\leq T^\e}\hspace{-1.5cm}\cos(x\cosh(u))g_s(u)duds\ll \frac{T^{2-r+2\e}}{X^lT^{(-1+\beta)(l-1)}}
  \end{equation}
  where the implied constant depends on $d,M,r,\e,l$, and we have assumed that $M$ is large enough such that \eqref{eq:g-bound} holds. Recall that $X=T^\alpha$. If $\alpha+\beta>1$ then the above error is less than $T^{1+\beta+ 2\e-(\alpha+\beta-1)l}$ which can be made small by choosing $l$ large.

  It remains to estimate
  \begin{equation}
    \int_{(\e)}P_r(s)L(1+2s,\chi_d)\frac{e^{s^2}}{s}
    \left(\frac{\abs{d}}{4\pi^2n}\right)^s
    \int_{\abs{u}\leq {T^{-1+\beta}}}\hspace{-1cm}\cos(x\cosh(u))g_s(u)duds.
  \end{equation}
  If we simply bound the inner integral trivially using \eqref{eq:g-bound} with $l=0$ this will lead to a total bound of $T^{1+\beta+\e}$. To get better estimates we focus on the $u$-integral and consider
  \begin{equation}\label{eq:oscillating-integral}
    F_\pm=\int_{\abs{u}\leq {T^{-1+\beta}}}\hspace{-1cm}e^{\pm i(x\cosh(u))}g_s(u)du.
  \end{equation}
  For $\abs{u}\leq T^{-1+\beta}$ the Taylor expansion of $\cosh(u)=1+u^2/2+ O(T^{4((-1+\beta))})$ gives that for $x\leq T^2$, and $\beta<1/2$ we have \begin{equation}e^{\pm i(x\cosh(u))}=e^{\pm ix}e^{\pm i\tfrac x2 u^2}+O \left(\frac{xT^{4\beta}}{T^4}\right). \end{equation}
  Using \eqref{eq:g-bound} with $l=0$, and the assumption on the upper bound of $x$  we see
  \begin{equation}
    F_\pm=G_\pm+O\left(x\frac{T^{4\beta}T^\beta}{T^4T}T^{2-r+2\Re(s)}\right)=G_\pm+O\left(\frac{T^{5\beta+2\Re(s)}}{T}\right),
  \end{equation}
  where
  \begin{equation}
    G_\pm=e^{\pm ix}\int_{\abs{u}\leq {T^{-1+\beta}}}e^{\pm \tfrac x2 i u^2}g_s(u)du.
  \end{equation}

  The strategy is now to use \eqref{eq:g-bound} to extend the range of the $u$-integral to $\R$ (at a negligible cost), insert the definition of $g_s(u)$, interchange the order of integration, use that for $x>0$ we have
  \begin{equation}\label{eq:fresnel-integral}
    \int_{\R}e^{\pm i\tfrac x 2 u^2}e^{2itu}du= e^{\pm i\tfrac \pi 4}\sqrt{\frac{2\pi}{x}}e^{\mp i\tfrac{2}{x}t^2}
  \end{equation}
  by \cite[3.322.3]{GradshteynRyzhik:2007}, followed by partial integration in the $t$ integral. There is a small problem in this argument;  Fubini-Tornelli is not directly applicable as \eqref{eq:fresnel-integral} does not converge absolutely. In order to circumvent this, we first use \eqref{eq:g-bound} to extend the integral to $\abs{u}\leq L$ for a large value $L$ to get
  \begin{equation}
    G_\pm=e^{\pm ix}\int_{\abs{u}\leq {L}}e^{\pm i\tfrac x2  u^2}g_s(u)du+O((1+\abs{s})^lT^{2+2\Re(s)-(\alpha+\beta-1)l}),
  \end{equation}
  where the error is small if $l$ is large. We now insert \eqref{eq:gdef} and interchange integration, which is now allowed. We get
  \begin{equation}
    G_\pm=e^{\pm ix}\int_0^\infty\psi_s(t)\int_{\abs{u}\leq L} e^{\pm i\tfrac x2  u^2}e^{2itu}du dt+O\left((1+\abs{s})^lT^{2+2\Re(s)-(\alpha+\beta-1)l}\right).
  \end{equation}
  The inner integral can be analyzed by integrating by parts twice, and we find that
  \begin{equation}
    \int_{\abs{u}\leq L}e^{\pm ixu^2}e^{2itu}du=\int_{\R}e^{\pm ixu^2}e^{2itu}du+O\left(\frac{1}{xL}\left(1+\frac{\abs{t}}{xL}+\frac{\abs{t}^2}{x}\right)\right).
  \end{equation}
  We use that in the $t$ integral the integrand is supported in $\abs{t}>X>1$, and we have $1/x\leq T^{100}$, so the error term is $O(\abs{t^2}T^{200}/L)$ in the range where it is applied. Combining the two preceding estimates and using \eqref{eq:fresnel-integral} we find that for $T^{-100}\leq x$
  \begin{equation}
    \abs{G_\pm}\leq \sqrt{\frac{2\pi}{x}}\abs{\int_0^\infty e^{\mp i\tfrac{2}{x}t^2} \psi_s(t)dt}+O((1+\abs{s})^l T^{2+2\Re(s)-(\alpha+\beta-1)l}) +O(\tfrac{T^{205}}{L}).
  \end{equation}
  Doing partial integration we find that as long as $1-r+6M +2\Re(s)\geq0$
  \begin{equation}
    \abs{\int_0^\infty e^{\mp i\tfrac{2}{x}t^2} \psi_s(t)dt }\ll \abs{x^l}\norm{\left(\frac{d}{dt}\frac{1}{t}\right)^l\psi_s}_{L^1(\R)}\ll x^lT^{2-r-2l\alpha+2\Re(s)}(1+\abs{s})^l.
  \end{equation}
  Since $x\leq T^{2-\e_0}$ we may choose $\alpha$ close enough to 1 that $2-\e_0<2\alpha$, after which this term will become negligible (take $\alpha=1-\beta/2$ and $0<\beta<\min(\varepsilon,\varepsilon_0)$). Collecting the above bounds, (choosing $L$ large enough and noting that $\alpha+\beta>1$) proves the claim.
\end{proof}

\subsection{Proof of Theorem \ref{thm:Twisted-first-moment}}
We now have the tools to prove Theorem \ref{thm:Twisted-first-moment}.
We start from the discrete spectrum contribution and apply the approximate functional equation \eqref{eq:approximate functional equation}
\begin{equation}
  \EuScript{D}
  =
  \sum_{t_j} \frac{L(\tfrac 12,\phi_j\times  f_\ck)\lambda_j(\ell)}{L(1,\sym^2\phi_j)}h(t_j)
  =
  2\sum_{n\geq 1} \frac{\lambda_{\ck}(n)}{\sqrt{n}} \sum_{t_j} \frac{\lambda_j(n)\lambda_j(\ell)}{L(1,\sym^2\phi_j)}h_n(t_j).
\end{equation}
As we saw in the proof of Lemma \ref{0112:lemma} (see \eqref{eq:h_n-bound}), the test function $h_n(t)$
decays fast enough both in $n$ and $t$ to ensure absolute convergence of the double sum.
Moreover, $h_n$ is even and holomorphic in the strip $|\Im(t)|\leq A$ for any $A>0$, since $W(n,t)$ and $h(t)$ are.
Moreover $h(t)$ decays exponentially in any fixed strip $\abs{\Im(t)}\leq 1/2+\delta$ and $W(n,t)$ grows at most polynomially in the same strip by applying Stirling to $\gamma(s,t)$ and then bounding the integral. Therefore the same-sign Kuznetsov formula \cite[Theorem 16.3]{IwaniecKowalski:2004}, \cite[Theorem 2.1]{HumphriesKhan:2023} applies.

The diagonal term gives
\begin{equation}\label{1402:eq001}
  \frac{\lambda_{\ck}(\ell)}{\sqrt{\ell}} \int_{-\infty}^{\infty} th(t)W(\ell,t) \tanh(\pi t)\frac{dt}{\pi^2}.
\end{equation}
By the first bound in Lemma \ref{0112:lemma} with $\eta=1/2-\e$ we can replace $W(\ell,t)$ by $L(1,\chi_d)$
up to an error. Combining this with the bound \eqref{eq:basics of lambda_chi} for $\lambda_{\ck}(\ell)$, we obtain that
\eqref{1402:eq001} equals, for any $\varepsilon>0$,
\begin{align}
  \frac{\lambda_{\ck}(\ell)}{\sqrt{\ell}} L(1,\chi_d) \int_{-\infty}^\infty th(t) \tanh(\pi t)\frac{dt}{\pi^2}
  +
  O_{d,\varepsilon}\biggl(|\lambda_{\ck}(\ell)|\int_{-\infty}^{\infty} |t^\varepsilon h(t)|dt\biggr)
  \\
  =
  \frac{\lambda_{\ck}(\ell)}{\ell^{\frac 12}} C_{M,d} T^2+ O_{M,d,\varepsilon}(T^{1+\varepsilon}\ell^{\varepsilon}).
\end{align}
The Kloosterman sum part, namely
\begin{equation}
  \sum_{n\geq 1}\frac{\lambda_\ck(n)}{\sqrt{n}}\sum_{c\geq 1} \frac{S(n,\ell,c)}{c}(\EuScript{K}^{+}h_n)\left(\frac{4\pi\sqrt{n\ell}}{c}\right),
\end{equation}
is bounded by applying Lemma \ref{2911:lemma1}, the bound \eqref{eq:basics of lambda_chi} on $\lambda_{\ck}(n)$
and Weil's bound  \begin{equation}\label{eq:Weil's bound}\abs{S(n,\ell,c)}\leq d(c)\sqrt{(n,\ell,c)c} \end{equation} on the Kloosterman sum.
The bound \eqref{2911:eq002} with $A=1/4+\e$ and $B$ sufficiently large shows that
the contribution from $n\geq T^{2+\e}$ is $O(\ell^{1/4+\e} T^{3/2-\e B+\e})$ which is negligibly small if $B$ is large. Therefore, we reduce to bounding the contribution from $n\leq T^{2+\e}$.
Next we  use \eqref{2911:eq003} when $c$ is small, and \eqref{2911:eq002} with $A=B>\frac{1}{4}+\e$ when $c$ is large,
obtaining
\begin{multline}
  \ll
  T^{-1+\e} \sum_{n\leq T^{2+\e}}\sum_{c\leq C} \frac{(nc)^\e \sqrt{(n,\ell,c)}}{\sqrt{nc}}
  +
  T^2\ell^{A} \sum_{n\leq T^{2+\e}}\sum_{c> C} \frac{(nc)^\e \sqrt{(n,\ell,c)}}{\sqrt{n}c^{2A+1/2}}
  \\
  \ll
  \left(C^{1/2} + \frac{T^{3}\ell^A}{C^{2A-1/2}}\right)(CT\ell)^\e
  \ll
  T^{\frac{3}{4A}+\e}\ell^{\frac{1}{4}+\e},
\end{multline}
when choosing $C=T^{\tfrac{3}{2A}}\ell^{1/2}$. We note that with this choice of $C$ the conditions for applying \eqref{2911:eq003} are satisfied. Choosing $A$ large enough gives a bound of $\ell^{1/4+\e}T^{\e}$.

Finally, the continuous part reads
\begin{equation}
  \EuScript{C} = \frac{1}{2\pi} \int_{-\infty}^{+\infty} \frac{1}{2}\biggl(2\sum_{n\geq 1} \frac{\lambda_{\ck}(n)}{\sqrt{n}}W(n,t) \lambda_t(n)\biggr) \frac{\lambda_t(\ell)h(t)}{|\zeta(1+2it)|^2}dt.
\end{equation}
By the approximate functional equation \eqref{eq:approximate functional equation}, the factorization \eqref{eq:factorisation when Eisenstein}, and the Euler product \eqref{eq:euler-product-class-group-L} we see that the term inside the big parenthesis equals \begin{equation}
  \abs{L(1/2+it, f_\ck)}^2+R ,\end{equation} where we have used \eqref{eq:basics of lambda_chi} to conclude that $\overline{L(1/2+it,f_\ck)}=L(1/2-it,f_\ck)$. We note that the
Stirling asymptotics
and standard bounds on Dirichlet $L$-functions at the edge of the critical strip and using \eqref{eq:Rbd} gives that $\abs{\zeta(1+2it)}^{-2}R\ll(1+\abs{t})^{\e -\tfrac{1}{2}}e^{-t^2}$ and therefore the integral over $t$ of the terms involving $R$ contributes $O(1)$.

Finally, using $|\zeta(1+2it)|^{-1}\ll (1+|t|)^\varepsilon$ along with
second moment estimates on Hecke $L$-functions (see
\cite[Proposition 1]{BlomerHarcos:2008a}
),
the continuous part contributes $O(T^{1+\varepsilon}\ell^\varepsilon)$ which finishes the proof of Theorem \ref{thm:Twisted-first-moment}.
\qed

\section{Fractional moment} \label{sectionfractional}

\subsection{The mollifier and key inequalities} We now construct mollifiers for central $L$-values of Rankin--Selberg $L$-functions $L(s,\phi_j \times f_{\ck})$ that behave like Euler products and use them to give pointwise bounds for the product of fractional central $L$-values, building upon the method of Radziwi{\l}{\l} and Soundararajan \cite{RadziwillSoundararajan:2015a}. Recalling the Hecke relations \eqref{eq:Hecke-relations} we find that the Hecke eigenvalues $\lambda_j(n)$ of $\phi_j$ satisfy
\begin{equation}
  \lambda_j(p)^a=\sum_{c=0}^a h_a(c) \lambda_j(p^c),
\end{equation}
for certain non-negative integers $h_a(c)$. It is well-known (see \cite[Lemma 3]{ConreyDukeFarmer:1997}) that
\begin{equation}
  h_a(c)=\frac{2}{\pi} \int_0^{\pi} (2 \, \cos \theta)^a \sin((c+1)\theta) \, \sin \theta \, d\theta,
\end{equation} from which we get that
\begin{equation} \label{eq:h1}
  0 \le h_a(c) \le 2^{a+1}.
\end{equation}
Additionally, it is easy to see that
\begin{equation} \label{eq:h2}
  h_a(0)=\begin{cases}
    1 & \text{ if } \,  a=2,       \\
    0 & \text{ if } \,  2 \nmid a.
  \end{cases}
\end{equation}
Let $\nu$ be the multiplicative function
with $\nu(p^a)=1/a!$ and note
that for any $k \in \mathbb N$ we have
\begin{equation}
  \sum_{p_1 \cdots p_k=n} 1=k! \nu(n).
\end{equation}
Given $\{ b(p) \}_p \subset \mathbb R$ with $\abs{b(p)}\le 4$, an interval $I\subset [2, \infty)$ and a Hecke--Maass form $\phi_j$ we define
\begin{equation} \label{eq:pdef}
  P_I(b,j)=\sum_{p \in I} \frac{b(p)\lambda_j(p)}{\sqrt{p}}.
\end{equation}
We note that $P_I(b,j)$ is real.
Further, we write $b(n)=\prod_{p^a || n} b(p)^a$. We have for any integer $k \ge 1$
\begin{equation} \label{eq:pexpand}
  P_I(b,j)^k=k! \sum_{\substack{p |n \Rightarrow p \in I \\ \Omega(n)=k}} \frac{b(n) \nu(n)}{\sqrt{n}} \ell_j(n)
\end{equation}
where $\ell_j$ is the multiplicative function with
\begin{equation} \label{eq:elldef}
  \ell_j(p^a)=\lambda_j(p)^a=\sum_{c=0}^a h_a(c) \lambda_j(p^c).
\end{equation}

For each $\ck \in \widehat{\Cl}_K$ we define the multiplicative function $g_{\ck}$ given by
\begin{equation} \label{eq:fdef}
  g_{\ck}(p^a)=\sum_{c=0}^a \frac{h_a(c)}{p^{c/2}} \lambda_{\ck}(p^c).
\end{equation}
Recall from \eqref{eq:basics of lambda_chi} that $\lambda_\ck(p^c)$ is real and satisfies $\abs{\lambda_\ck(p^c)}\leq d(p^c)$.
Using \eqref{eq:h1} and \eqref{eq:h2} we see for each $\ck \in \widehat{\Cl}_K$ we have the simple bound
\begin{equation} \label{eq:fbd}
  |g_{\ck}(p^a)|\le \begin{cases}
    \frac{2^{4a}}{\sqrt{p}} & \text{ if } 2 \nmid a, \\
    2^{4a}                  & \text{ if } 2|a.
  \end{cases}
\end{equation}
Given $\ck \in \widehat{\Cl}_K$ we define the completely multiplicative function $b_{\ck}$, which is given by
\begin{equation} \label{eq:bdef}
  b_{\ck}(p)=
  -\frac12 \lambda_{\ck}(p).
\end{equation}
We note that $\abs{b_\ck(p)}\leq 1$. Define also the completely multiplicative function \begin{equation} \label{eq:b12def}
  b_{\ck,\ck'}(p)=b_{\ck}(p)-b_{\ck'}(p)=\frac12(\lambda_{\ck'}(p)-\lambda_{\ck}(p)), \end{equation}
which we note satisfies $\abs{b_{\ck,\ck'}(p)}\leq 2$.

For each integer $r \ge 1$ define for $T\geq e^{e^{C_1}}$
\begin{equation} \label{eq:Ndef}
  N_1=2\lceil C_1 \log \log T \rceil \quad \text{ and } \quad N_{r+1}=2 \lceil C_1 \log N_r \rceil.
\end{equation}
Here  $C_1>100$ is an absolute constant that is sufficiently large in terms of other absolute constants.
Let $R$ be the largest positive integer such that $N_R>C_1^2$. Clearly,
\begin{equation} \label{eq:NRbd1}
  N_R \le e^{C_1}.
\end{equation}
Also, note that since
\begin{equation} \notag
  C_1^2< N_R= 2 \lceil C_1 \log N_{R-1} \rceil \le 2 (C_1 \log N_{R-1}+1)
\end{equation}
we have
\begin{equation}
  N_{R-1}> e^{C_1/2-1/C_1}> C_1^4.
\end{equation}
Iterating this argument, we obtain the crude bounds $N_{R-j+1} > C_1^{2j}$ for each $j=1,\ldots, R$. Since in particular $N_1>C_1^{2R}$ we verify that such a maximal $R$ exists, and we obtain the crude, yet sufficient bounds
\begin{equation} \label{eq:Rbd2}
  R=O(\log \log \log T)
\end{equation}
as well as
\begin{equation} \label{eq:recipbd}
  \sum_{r=1}^R \frac{1}{N_r} < \sum_{j=1}^{\infty}
  \frac{1}{C_1^{2j}} <C_1^{-1}.
\end{equation}
Also, define $x_r=T^{1/N_r^2}$ for each $r=1,\ldots, R$ and
\begin{equation} \label{eq:Idef}
  I_1=(2^{14}, x_1] \quad \text{ and } \quad I_r=(x_{r-1}, x_r], \quad r=2,\ldots,R.
\end{equation}

We also let $\delta_{\ck}=1/2$ if $\ck$ is a genus character and $\delta_{\ck}=1/4$ otherwise. For each $r=1,\ldots, R$ we define
\begin{align} \label{eq:molldef}
  M_r(j,\ck) & =(\log x_r)^{\delta_{\ck}} \exp(P_{(2^{14},x_r]}(b_{\xi},j)).
\end{align}
To motivate this definition, we note that
by comparing with \eqref{eq:dominant-term-Euler-prod} we see that $P_{(2^{14}, x_r]}(b_{\xi},j)$ equals $-1/2$ times the sum of the  $2^{14}<p\leq x_r$, $n=1$ terms of the Dirichlet representation of  $\log L(s,\phi_j\times f_\ck)$, which are then evaluated at $s=1/2$.

We note also that the sum of the $2^{14}<p\leq x_r$, $n=2$ terms of the Dirichlet representation of  $\log L(s,\phi_j\times f_\ck)$, evaluated at $s=1/2$ (compare \eqref{eq:secondary-term-Euler-prod}), and then multiplied by  $-1/2$ equals
\begin{equation}\label{eq:contribution from secondary terms}
  -\frac{1}{2}\sum_{2^{14}<p\leq x_r}\frac{(\lambda_j(p^2)-1)(\lambda_\ck(p^2)+\chi_d(p)-2)}{2p}.
\end{equation}
By the ideal theory for quadratic imaginary fields \cite[page 57]{IwaniecKowalski:2004} a small calculation shows that the second factor in the numerator equals $\lambda_{\ck^2}(p)+1-\chi_d(p)$ \emph{except} at the finitely many primes dividing $d$. Using that $\lambda_j(p^2)$ is the $p$th coefficient of the symmetric square of $\phi_j$, and that for a genus character \eqref{eq:Fourier-coefficients when genus} $\lambda_{\ck_{d_1,d_2}}(p)=\chi_{d_1}(p)+\chi_{d_2}(p)$  Selberg's orthogonality conjecture \cite{LiuWangYe:2005, LiuYe:2005, LiuYe:2006, AvdispahicSmajlovic:2010} gives that \eqref{eq:contribution from secondary terms} has asymptotics
\begin{equation}
  \frac{1}{4}(1_{\ck\in \mathcal G_K}+1)\log\log x_r +O(1)=\delta_\ck\log\log x_r+O(1)
\end{equation}
where $1_{\ck\in \mathcal G_K}$ is the indicator function of $\ck$ being a genus character. Exponentiating the asymptotics gives the factor of $(\log x)^{\delta_\ck}$ in \eqref{eq:molldef}.

The function $M_r(j,\ck)$ therefore \lq imitates\rq{} the reciprocal of the root of the central $L$-value
\begin{equation}
  L(\tfrac{1}{2},\phi_j\times f_\ck)^{-\tfrac{1}{2}}=\exp(-\tfrac{1}{2}\log L(\tfrac{1}{2},\phi_j\times f_\ck)),
\end{equation}
and this approximation is more accurate for larger $r$ at the cost of becoming more difficult to handle.

Define also for $r=1,\ldots, R$
\begin{align} \label{eq:molldef2}
  \mathcal M_r(j,\ck,\ck') & =(\log x_r)^{\delta_{\ck}-\delta_{\ck'}} \prod_{q=1}^r \sum_{k=0}^{N_q} \frac{P_{I_q}(b_{\ck,\ck'},j)^k}{k!} \\
                           & =(\log x_r)^{\delta_{\ck}-\delta_{\ck'}} \prod_{q=1}^r \sum_{\substack{p|n \Rightarrow p \in I_q             \\ \Omega(n) \le N_q}} \frac{b_{\ck,\ck'}(n) \nu(n)}{\sqrt{n}} \ell_j(n),
\end{align}
where we have used \eqref{eq:pexpand}. In particular we have that $\mathcal M_r(j,\ck,\ck')$ is supported on integers $n \le T^{1/N_1+\cdots +1/N_r}< T^{1/C_1}$ by \eqref{eq:recipbd}.
For real $t \le N/e^2$ with $N\ge 2$ even \cite[Lemma 1]{RadziwillSoundararajan:2015a} implies
\begin{equation} \label{eq:exptaylor}
  e^t \le \bigg(1+\frac{e^{-N}}{16} \bigg) \sum_{k=0}^N \frac{t^k}{k!}.
\end{equation}
Hence, for $j$ such that $|P_{I_q}(b_{\ck},j)|,|P_{I_q}(b_{\ck'},j)|\le N_q/(2e^2)$ for each $q=1,\ldots, r$ we have that
\begin{equation} \label{eq:eulerprod}
  M_r(j,\ck)M_r(j,\ck')^{-1}=(\log x_r)^{\delta_{\ck}-\delta_{\ck'}} \prod_{q=1}^r \exp(P_{I_r}(b_{\ck,\ck'},j))
  \le C(r) \mathcal M_r(j,\ck,\ck'),
\end{equation}
where
\begin{equation} \label{eq:Crdef}
  C(r) =\prod_{q=1}^r \bigg(1+\frac{e^{-N_q}}{16} \bigg) \ll 1.
\end{equation}

With our mollifiers in hand, we will now derive the key inequalities, which bound 
$L(s,\phi_j \times f_{\ck})^{1/2}L(s,\phi_j \times f_{\ck'})^{1/2}$ 
in terms of the central $L$-values multiplied by short Dirichlet polynomials. For each $r=1,\ldots, R$ consider the sets
\begin{equation}
  A_r=\{ j :  |P_{I_q}(b,j)|\le N_q/(2e^2)\text{ for both } b=b_{\ck},b_{\ck'} \text{ and all } q=1,\ldots,r \}
\end{equation}
and
\begin{equation}
  B_r=\{j : |P_{I_r}(b,j)| \ge N_r/(2e^2) \text{ for } b=b_{\ck} \text{ or } b=b_{\ck'}\}
\end{equation}
where $b_{\ck}$ is as given in \eqref{eq:bdef}.   Let $S_0=B_1$ and $S_{R}=A_R$. We will now define sets $S_r$ for each $r=1,\dots, R-1$
so that $S_r$ consists of $j$'s where $r$ is the smallest integer such that $P_{I_{r+1}}(b,j)$ is unusually large for $b=b_{\ck}$ or $b=b_{\ck'}$. Concretely, for each $r=1,\ldots, R-1$ we
let
\begin{equation} \label{eq:Srsetdef}
  S_r=A_r \cap B_{r+1}.
\end{equation} We note that $S_0\cup \cdots \cup S_R$ is a disjoint union of the set of all $j$'s.  To shorten notation, we write
\begin{equation}\label{eq:script-L-def}
  \mathcal L(j,\ck)=\frac{L(\tfrac12, \phi_j \times f_{\ck})}{L(1,\tmop{sym}^2 \phi_j)}.
\end{equation}

\begin{lemma} \label{lem:keyineq} 
Let $\ck \in \widehat{\Cl}_K, \ck'\in\widehat{\Cl}_{K'}$ and let $\phi_j$ be a Hecke--Maass cusp form. Each of the following hold.
  \begin{enumerate}
    \item For $j \in S_R$ we have
          \begin{equation}
            2 \sqrt{\mathcal L(j,\ck) \mathcal L(j,\ck')} \le C(R) \left( \mathcal L(j,\ck) \mathcal M_R(j,\ck,\ck')+\mathcal L(j,\ck') \mathcal M_R(j,\ck',\ck)\right).
          \end{equation}
    \item For $j \in S_0$ we have
          \begin{equation}
            2 \sqrt{\mathcal L(j,\ck) \mathcal L(j,\ck')} \le \left(\mathcal L(j,\ck)+\mathcal L(j,\ck')\right) \left( \left( \frac{P_{I_{1}}(b_{\ck},j) 2e^2}{N_{1}}\right)^{N_{1}}+\left(\frac{P_{I_{1}}(b_{\ck'},j) 2e^2}{N_{1}} \right)^{N_{1}}  \right).
          \end{equation}
    \item  Suppose $r \in \{1, \ldots, R-1\}$. For $j \in S_r$ we have
          \begin{equation}
            \begin{split}
              2 \sqrt{\mathcal L(j,\ck) \mathcal L(j,\ck')} \le & C(r)\left( \mathcal L(j,\ck)\mathcal M_r(j,\ck,\ck') +\mathcal L(j,\ck')\mathcal M_r(j,\ck',\ck) \right)                                                         \\
                                                                & \times \left( \left( \frac{P_{I_{r+1}}(b_{\ck},j) 2e^2}{N_{r+1}}\right)^{N_{r+1}}+\left(\frac{P_{I_{r+1}}(b_{\ck'},j) 2e^2}{N_{r+1}} \right)^{N_{r+1}}  \right).
            \end{split}
          \end{equation}
  \end{enumerate}
\end{lemma}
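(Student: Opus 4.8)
The plan is to obtain all three inequalities from a single application of the elementary inequality \eqref{eq:elementary} with different choices of the free parameters $M,M'$, using throughout that $\mathcal L(j,\ck)$ and $\mathcal L(j,\ck')$ are non-negative (this is the content of Waldspurger's formula \eqref{eq:Waldspurger}). Beyond \eqref{eq:elementary} the only inputs are the factorization \eqref{eq:eulerprod} --- valid precisely when $|P_{I_q}(b_{\ck},j)|,|P_{I_q}(b_{\ck'},j)|\le N_q/(2e^2)$ for all $q\le r$ --- and the observation that each $N_r$ from \eqref{eq:Ndef} is even, so that $(P_{I_r}(b,j)\,2e^2/N_r)^{N_r}=|P_{I_r}(b,j)\,2e^2/N_r|^{N_r}\ge 0$ always, and this quantity is $\ge 1$ exactly when $|P_{I_r}(b,j)|\ge N_r/(2e^2)$.

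For part (1) I would note that $j\in S_R=A_R$ means the hypothesis of \eqref{eq:eulerprod} holds with $r=R$. Applying \eqref{eq:elementary} with $L=\mathcal L(j,\ck)$, $L'=\mathcal L(j,\ck')$, $M=M_R(j,\ck)$, $M'=M_R(j,\ck')$ gives
\[
2\sqrt{\mathcal L(j,\ck)\mathcal L(j,\ck')}\le \mathcal L(j,\ck)\,M_R(j,\ck)M_R(j,\ck')^{-1}+\mathcal L(j,\ck')\,M_R(j,\ck')M_R(j,\ck)^{-1},
\]
and then \eqref{eq:eulerprod}, together with its mirror image under $\ck\leftrightarrow\ck'$, bounds the two ratios by $C(R)\mathcal M_R(j,\ck,\ck')$ and $C(R)\mathcal M_R(j,\ck',\ck)$ respectively. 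Part (3) is the same argument with $R$ replaced by $r$ --- legitimate because $j\in S_r\subseteq A_r$ again furnishes the hypothesis of \eqref{eq:eulerprod} for $q=1,\dots,r$ --- followed by multiplying both sides by $(P_{I_{r+1}}(b_{\ck},j)2e^2/N_{r+1})^{N_{r+1}}+(P_{I_{r+1}}(b_{\ck'},j)2e^2/N_{r+1})^{N_{r+1}}$; this factor is $\ge 1$ since $j\in S_r\subseteq B_{r+1}$, and the inequality is preserved because the right-hand side is already non-negative.

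For part (2) I would instead take $M=M'=1$ in \eqref{eq:elementary}, obtaining $2\sqrt{\mathcal L(j,\ck)\mathcal L(j,\ck')}\le \mathcal L(j,\ck)+\mathcal L(j,\ck')$, and then multiply through by $(P_{I_1}(b_{\ck},j)2e^2/N_1)^{N_1}+(P_{I_1}(b_{\ck'},j)2e^2/N_1)^{N_1}$, which is $\ge 1$ because $j\in S_0=B_1$ and $N_1$ is even.

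I do not anticipate a genuine obstacle here: the real work has been front-loaded into the construction of the mollifiers and the verification of \eqref{eq:eulerprod}. The only points that need care are (a) checking that each of the membership conditions defining $S_R$, $S_r$, $S_0$ matches exactly the hypothesis under which \eqref{eq:eulerprod} (respectively the ``power $\ge 1$'' device) is available, and (b) bookkeeping the prefactor $(\log x_r)^{\delta_{\ck}-\delta_{\ck'}}$ when $\delta_{\ck}\ne\delta_{\ck'}$, which is already absorbed into the definition \eqref{eq:molldef2} of $\mathcal M_r$ and so causes no trouble.
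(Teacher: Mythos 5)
Your proposal is correct and follows essentially the same route as the paper: apply \eqref{eq:elementary} with $M=M'=1$ on $S_0$ and with $M=M_r(j,\ck)$, $M'=M_r(j,\ck')$ on $S_r$ for $r\ge 1$, bound the ratios via \eqref{eq:eulerprod} on $A_r$, and multiply by the even-power factor which is $\ge 1$ on $B_{r+1}$. The points you flag for care (evenness of $N_{r+1}$, positivity of the right-hand side before multiplying, and the matching of set memberships to the hypotheses of \eqref{eq:eulerprod}) are exactly the ones the paper's argument relies on.
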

\begin{proof}
  For each $r=1,\ldots, R$ and each $j \in S_r$ we have by \eqref{eq:eulerprod}
  \begin{equation} \label{eq:mbd}
    M_r(j,\ck)M_r(j,\ck')^{-1} \le C(r) \mathcal M_r(j,\ck,\ck') \text{
      and   } M_r(j,\ck')M_r(j,\ck)^{-1} \le C(r) \mathcal M_r(j,\ck',\ck).
  \end{equation}
  Additionally, for each $r=0,\ldots, R-1$ and $j \in S_r$
  \begin{equation} \label{eq:markov}
    1 \le \left( \frac{P_{I_{r+1}}(b_{\ck},j) 2e^2}{N_{r+1}}\right)^{N_{r+1}}+\left(\frac{P_{I_{r+1}}(b_{\ck'},j) 2e^2}{N_{r+1}} \right)^{N_{r+1}}.
  \end{equation}

  Apply \eqref{eq:elementary} with $L=\mathcal L(j,\ck)$ and $L'=\mathcal L(j,\ck')$. If $j \in S_R$ take $M=M_R(j,\ck)$, $M'=M_R(j,\ck')$ and then use \eqref{eq:mbd} with $r=R$ to get the first claim.
  In the case where $j \in S_0$ choose $M=M'=1$ and then use \eqref{eq:markov} with $r=0$ to obtain the second claim. Finally, if $j \in S_r$ with $r=1,\ldots,R-1$ take $M=M_r(j,\ck)$, $M'=M_r(j,\ck')$ and then use \eqref{eq:mbd} and \eqref{eq:markov} to get the third claim.
\end{proof}

\subsection{Preliminary Lemmas}

Using the formula for the twisted moment given in Theorem~\ref{thm:Twisted-first-moment}
we establish a follow-up version where in place of twisting by a single eigenvalue
we allow for a Dirichlet polynomial. This will later be used
to bound a mollified first moment and to control the number of exceptional $\phi_j$ for which 
$P_{I_r}(b_{\ck},j)$ or $P_{I_r}(b_{\ck'},j)$ 
is unusually large, in which case \eqref{eq:eulerprod} is not valid.

\begin{lem} \label{lem:expand}
  Let $\varepsilon>0$, $1 \le r \le R$, and $\ck \in \widehat{\Cl}_K$. Let $k_1,\ldots,k_r$ be nonnegative integers such that $\prod_{q=1}^r x_q^{k_q} \le T^{2-\varepsilon_0}$ for some $\varepsilon_0>0$.
  Then for $h$ as in \eqref{def:h} and $C_{M,d}$ as in Theorem \ref{thm:Twisted-first-moment}
  we have that
  \begin{equation}
    \begin{split}
               & \sum_{t_j >0} \mathcal L(j,\ck) \left(\prod_{q=1}^r  P_{I_q}(b,j)^{k_q} \right)
      h(t_j)                                                                                    \\
      \qquad ={} & C_{M,d}T^2 \left( \prod_{q=1}^r k_q!\sum_{\substack{p|n \Rightarrow p \in I_q  \\ \Omega(n) =k_q}} \frac{b(n) \nu(n)}{\sqrt{n}}g_{\ck}(n) \right) +O\left( T^{1+\varepsilon} \prod_{q=1}^r (2^{12} x_q)^{k_q/2}\right).
    \end{split}
  \end{equation}
  Here $g_{\ck}$ is as given in \eqref{eq:fdef}, $P_I(b,j)$ is as defined in \eqref{eq:pdef}, $I_r$ is as in \eqref{eq:Idef}, and $\mathcal L(j,\ck)$ as in \eqref{eq:script-L-def}.
\end{lem}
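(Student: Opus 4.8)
The plan is to expand the Dirichlet polynomials $P_{I_q}(b,j)^{k_q}$ via \eqref{eq:pexpand}, reduce the left-hand side to a finite linear combination of the twisted first moments handled by Theorem~\ref{thm:Twisted-first-moment}, and then apply that theorem term by term.

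First I would use \eqref{eq:pexpand} in each of the $r$ factors to write
\[
  \prod_{q=1}^r P_{I_q}(b,j)^{k_q}
  =\Bigl(\prod_{q=1}^r k_q!\Bigr)\sum_{n_1,\dots,n_r}\prod_{q=1}^r\frac{b(n_q)\nu(n_q)}{\sqrt{n_q}}\,\ell_j(n_q),
\]
where each $n_q$ runs over integers with $\Omega(n_q)=k_q$ supported on primes in $I_q$. Since the intervals $I_q$ are pairwise disjoint (see \eqref{eq:Idef}), the $n_q$ are pairwise coprime, so setting $n=n_1\cdots n_r$ and using multiplicativity of $b$, $\nu$ and $\ell_j$ the right-hand side collapses to $\bigl(\prod_q k_q!\bigr)\sum_n\tfrac{b(n)\nu(n)}{\sqrt n}\,\ell_j(n)$ over the corresponding set of $n$. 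Next, \eqref{eq:elldef} together with the multiplicativity of $\lambda_j$ gives $\ell_j(n)=\sum_{e\mid n}\beta_n(e)\lambda_j(e)$ with $\beta_n(e):=\prod_{p^a\|n}h_a(v_p(e))\ge0$. Substituting this, multiplying by $\mathcal L(j,\ck)h(t_j)$, summing over $t_j>0$, and interchanging the (finite) sums over $n$ and over $e\mid n$ with the sum over $t_j$, I arrive at
\[
  \sum_{t_j>0}\mathcal L(j,\ck)\Bigl(\prod_{q=1}^r P_{I_q}(b,j)^{k_q}\Bigr)h(t_j)
  =\Bigl(\prod_{q=1}^r k_q!\Bigr)\sum_n\frac{b(n)\nu(n)}{\sqrt n}\sum_{e\mid n}\beta_n(e)\sum_{t_j>0}\mathcal L(j,\ck)\lambda_j(e)h(t_j).
\]

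Now I would invoke Theorem~\ref{thm:Twisted-first-moment}. Every $e$ occurring divides some $n\le\prod_q x_q^{k_q}\le T^{2-\varepsilon_0}$, so, applying the theorem with $\varepsilon_0$ there replaced by $\varepsilon_0/2$, we get uniformly in these $e$ that $\sum_{t_j>0}\mathcal L(j,\ck)\lambda_j(e)h(t_j)=C_{M,d}T^2\lambda_\ck(e)e^{-1/2}+O(T^{1+\varepsilon})$. For the main term I use the local identity
\[
  \sum_{e\mid n}\beta_n(e)\frac{\lambda_\ck(e)}{\sqrt e}
  =\prod_{p^a\|n}\sum_{c=0}^a h_a(c)\frac{\lambda_\ck(p^c)}{p^{c/2}}
  =\prod_{p^a\|n}g_\ck(p^a)=g_\ck(n),
\]
which is precisely the definition \eqref{eq:fdef}; re-factoring the resulting $n$-sum as a product over the coprime pieces $n=n_1\cdots n_r$ (all of $b,\nu,g_\ck$ being multiplicative) reproduces the claimed main term $C_{M,d}T^2\prod_q\bigl(k_q!\sum_{\Omega(n)=k_q,\,p\mid n\Rightarrow p\in I_q}\tfrac{b(n)\nu(n)}{\sqrt n}g_\ck(n)\bigr)$. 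For the error term I note that $\sum_{e\mid n}|\beta_n(e)|=\prod_{p^a\|n}\sum_{c=0}^a h_a(c)\le\prod_{p^a\|n}(a+1)2^{a+1}\le2^{3\Omega(n)}$ by \eqref{eq:h1}, that $|b(n)|\le4^{\Omega(n)}$ since $|b(p)|\le4$, and that $k_q!\,\nu(n)$ counts the ordered factorisations $n=p_1\cdots p_{k_q}$; hence for each $q$
\[
  k_q!\sum_{\substack{\Omega(n)=k_q\\ p\mid n\Rightarrow p\in I_q}}\frac{|b(n)|\nu(n)}{\sqrt n}\sum_{e\mid n}|\beta_n(e)|
  \le2^{5k_q}\Bigl(\sum_{p\in I_q}p^{-1/2}\Bigr)^{k_q}
  \le2^{5k_q}(2\sqrt{x_q})^{k_q}=(2^{12}x_q)^{k_q/2},
\]
where I used $\sum_{p\le x}p^{-1/2}\le\sum_{m\le x}m^{-1/2}\le2\sqrt x$. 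Taking the product over $q$ and multiplying by the uniform $O(T^{1+\varepsilon})$ from the twisted first moment yields the stated error $O\bigl(T^{1+\varepsilon}\prod_q(2^{12}x_q)^{k_q/2}\bigr)$.

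The argument is essentially bookkeeping layered on top of Theorem~\ref{thm:Twisted-first-moment}, which is the only substantive ingredient and is used as a black box. The points that require care are: tracking the combinatorial constants so that they collapse to precisely $2^{12}$ (this is what forces the inputs $|b(p)|\le4$, the bound $h_a(c)\le2^{a+1}$, and $\sum_{p\le x}p^{-1/2}\le2\sqrt x$); checking that every $e$ appearing lies in the range where the twisted first moment holds with a single implied constant; and verifying that the interchange of summation is legitimate, which is immediate since for fixed $T$ the sums over $n$ and over $e\mid n$ are finite.
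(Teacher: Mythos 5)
Your proposal is correct and follows essentially the same route as the paper: expand via \eqref{eq:pexpand} and multiplicativity, write $\ell_j(n)=\sum_{e\mid n}\beta_n(e)\lambda_j(e)$ so that Theorem~\ref{thm:Twisted-first-moment} applies term by term (the identity $\sum_{e\mid n}\beta_n(e)\lambda_\ck(e)e^{-1/2}=g_\ck(n)$ producing the main term), and control the error with $|b(n)|\le 4^{\Omega(n)}$, $\sum_{c\le a}h_a(c)\le(a+1)2^{a+1}\le 8^{\Omega}$ and $\sum_{p\in I_q}p^{-1/2}\le 2\sqrt{x_q}$, which is exactly the paper's bookkeeping leading to the constant $2^{12}$. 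No gaps.
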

\begin{proof}
  Using \eqref{eq:pexpand} and multiplicativity we get that
  \begin{equation} \label{eq:applyexpand}
    \begin{split}
      \sum_{j \ge 1} \mathcal L(j,\ck)  \bigg(\prod_{q=1}^r  P_{I_q}(b,j)^{k_q} \bigg)
      h(t_j)
      ={} & k_1!\cdots k_r! \sum_{\substack{p|n_1 \Rightarrow p \in I_1     \\ \Omega(n_1)=k_1}} \cdots \sum_{\substack{p|n_r \Rightarrow p \in I_r\\ \Omega(n_r)=k_r}} \frac{b(n_1\cdots n_r) \nu(n_1\cdots n_r)}{\sqrt{n_1\cdots n_r}} \\
        & \times \sum_{j \ge 1} \mathcal L(j,\ck) \ell_{j}(n_1\cdots n_r)
      h(t_j).
    \end{split}
  \end{equation}
  Applying Theorem \ref{thm:Twisted-first-moment} and noting $n=n_1\cdots n_r \le \prod_{q=1}^r x_q^{k_q} \le T^{2-\e_0}$ we have
  \begin{equation} \label{eq:applymoment}
    \sum_{j \ge 1} \mathcal L(j,\ck) \ell_j(n)
    h(t_j)=C_{M,d}T^2 g_{\ck}(n)+O( 8^{\Omega(n)}T^{1+\e} ),
  \end{equation}
  where to estimate the error term we used the bound
  \begin{equation}
    \prod_{p^a || n } \sum_{c=0}^a h_a(c) \le \prod_{p^a||n} (a+1)2^{a+1} \le 8^{\Omega(n)}.
  \end{equation}
  Hence, combining \eqref{eq:applyexpand} and \eqref{eq:applymoment} we get, using $\abs{b(n)}\leq 4^{\Omega(n)}$, that
  \begin{equation} \label{eq:intermediate}
    \begin{split}
      \sum_{j \ge 1} \mathcal L(j,\ck) \bigg(\prod_{q=1}^r  P_{I_q}(b,j)^{k_q} \bigg)
      h(t_j)
      = {} & C_{M,d}T^2 \bigg( \prod_{q=1}^r k_q!\sum_{\substack{p|n \Rightarrow p \in I_q        \\ \Omega(n) =k_q}} \frac{b(n) \nu(n)}{\sqrt{n}}g_{\ck}(n) \bigg)\\
           & +O\bigg( T^{1+\e} \bigg( \prod_{q=1}^r k_q!\sum_{\substack{p|n \Rightarrow p \in I_q \\ \Omega(n) =k_q}} \frac{32^{\Omega(n)} \nu(n)}{\sqrt{n}} \bigg) \bigg).
    \end{split}
  \end{equation}
  We note that for each $q=1, \ldots, r$
  \begin{equation}
    k_q! \sum_{\substack{p|n \Rightarrow p \in I_q \\ \Omega(n)=k_q}} \frac{ 32^{\Omega(n)}\nu(n)}{\sqrt{n}}=\bigg( 32\sum_{p \in I_q}\frac{1}{\sqrt{p}} \bigg)^{k_q} \le (64x_q^{1/2})^{k_q}.
  \end{equation}
  Applying this bound in \eqref{eq:intermediate} completes the proof.\end{proof}

The next two lemmas will be used to estimate the sums appearing in the main term in Lemma \ref{lem:expand}.

\begin{lem} \label{lem:bound}
  Let $k$ be a nonnegative integer and $I \subset [2,\infty)$ be an interval. We have that
  \begin{equation}
    (2k)! \bigg| \sum_{\substack{p|n \Rightarrow p \in I \\ \Omega(n) =2k}}
    \frac{b(n) \nu(n)g_{\ck}(n)}{\sqrt{n}}\bigg| \le e^{O(k)}\bigg( \bigg(\sum_{p \in I} \frac1p\bigg)^{2k}+\bigg(k \sum_{p \in I} \frac1p\bigg)^{k}\bigg),
  \end{equation}
  where the implied constant is absolute.
\end{lem}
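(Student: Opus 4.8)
\emph{Sketch of proof.}
Write $n=\prod_p p^{a_p}$, so $\sum_p a_p=2k$. The plan is first to replace the weight $|b(n)g_{\ck}(n)|/\sqrt n$ by a purely local one. Since $b$ is completely multiplicative with $|b(p)|\le 4$, so $|b(n)|\le 2^{4k}$, and since by \eqref{eq:fbd} $|g_{\ck}(p^a)|\le 2^{4a}$ for $a$ even and $|g_{\ck}(p^a)|\le 2^{4a}p^{-1/2}$ for $a$ odd, so $|g_{\ck}(n)|\le 2^{8k}\prod_{p\,:\,2\nmid a_p}p^{-1/2}$, and since $n^{-1/2}\prod_{p\,:\,2\nmid a_p}p^{-1/2}=\prod_p p^{-\lceil a_p/2\rceil}$, we get
\[
  \frac{|b(n)|\,|g_{\ck}(n)|}{\sqrt n}\le 2^{12k}\,w(n),\qquad w(n):=\prod_p p^{-\lceil a_p/2\rceil}.
\]
As $2^{12k}=e^{O(k)}$, it suffices to prove
\[
  T_k:=(2k)!\sum_{\substack{\Omega(n)=2k\\ p\mid n\Rightarrow p\in I}}\nu(n)\,w(n)\ \le\ e^{O(k)}\bigl(P^{2k}+(kP)^{k}\bigr),\qquad P:=\sum_{p\in I}\frac1p .
\]

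Next I would recognise $T_k$ as a coefficient of an Euler product. Since $\nu(p^a)=1/a!$ and $w(p^a)=p^{-\lceil a/2\rceil}$, splitting a local sum into even and odd exponents gives the identity $\sum_{a\ge0}w(p^a)z^a/a!=\cosh(z/\sqrt p)+\sinh(z/\sqrt p)/\sqrt p$, and hence, writing $[z^{2k}]$ for the coefficient of $z^{2k}$,
\[
  \sum_{\substack{\Omega(n)=2k\\ p\mid n\Rightarrow p\in I}}\nu(n)\,w(n)=[z^{2k}]\,G(z),\qquad G(z):=\prod_{p\in I}\Bigl(\cosh\tfrac{z}{\sqrt p}+\tfrac{1}{\sqrt p}\sinh\tfrac{z}{\sqrt p}\Bigr).
\]
We may assume $P<\infty$, since otherwise the right-hand side of the bound for $T_k$ is infinite (for $k\ge1$) and there is nothing to prove; but then $I$ contains only finitely many primes, so $G$ is a polynomial with non-negative Taylor coefficients, and therefore $[z^{2k}]G(z)\le \rho^{-2k}G(\rho)$ for every $\rho>0$.

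Finally I would bound $G(\rho)$ and optimise $\rho$. From $\cosh x\le e^{x^2/2}$, $\sinh x\le x\,e^{x^2/2}$ and $1+u\le e^u$ for $x,u\ge0$, each factor of $G(\rho)$ is at most $e^{\rho^2/(2p)}\bigl(1+\rho/p\bigr)\le e^{\rho^2/(2p)+\rho/p}$, so $G(\rho)\le \exp\bigl(\tfrac{\rho^2}{2}P+\rho P\bigr)$; combined with Stirling's estimate $(2k)!\le e^{O(k)}k^{2k}$ this yields $T_k\le e^{O(k)}k^{2k}\rho^{-2k}\exp\bigl(\tfrac{\rho^2}{2}P+\rho P\bigr)$ for every $\rho>0$. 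Taking $\rho=2k/P$ when $P\ge k$ gives $T_k\le e^{O(k)}P^{2k}$, while taking $\rho=\sqrt{2k/P}$ when $0<P<k$ gives $T_k\le e^{O(k)}(kP)^{k}$, the degenerate cases $P=0$ and $k=0$ being immediate; together these establish the bound for $T_k$, and hence the lemma.

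I do not anticipate a genuine obstacle. The two points that need a little care are the generating-function step — in particular the closed form for $\sum_{a\ge0}w(p^a)z^a/a!$ coming from splitting into even and odd $a$, and the fact that $G$ has non-negative Taylor coefficients — and checking that the two choices of $\rho$ distribute the powers of $P$ and $k$ exactly as in the two terms of the asserted bound.
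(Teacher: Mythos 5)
Your argument is correct, and the finishing step is genuinely different from the paper's. The initial reduction coincides with the paper's: bounding $|b(n)g_{\ck}(n)|/\sqrt{n}$ by $2^{12k}\prod_p p^{-\lceil a_p/2\rceil}$ via \eqref{eq:fbd} is exactly the paper's decomposition $n=u^2v$ ($v$ squarefree) with weight $1/(uv)$, since $uv=\prod_p p^{\lceil a_p/2\rceil}$. The paper then evaluates the resulting sum explicitly as $\sum_{l=0}^{k}(2^{12}P)^{2k-l}/(l!(2k-2l)!)$ by summing over $l=\Omega(u)$, and controls the factorials with the elementary inequality $(2k)!(k-l)!/((2k-2l)!\,k!)\le(4k)^l$, which leads to $(2^{12}P)^k(2^{12}P+4k)^k/(2k)!$ and hence the two terms of the bound. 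You instead package the same sum as the coefficient $[z^{2k}]G(z)$ of an Euler product and apply Rankin's trick $[z^{2k}]G\le\rho^{-2k}G(\rho)$; your two choices $\rho=2k/P$ (for $P\ge k$) and $\rho=\sqrt{2k/P}$ (for $P<k$) do reproduce exactly the two terms $P^{2k}$ and $(kP)^k$, and in both regimes $\rho^2P/2+\rho P=O(k)$, so the exponential factor is harmless. Your route is slightly more flexible (one only needs an upper bound for $G(\rho)$, and the method adapts to other weights), while the paper's is fully explicit and avoids Stirling. One small inaccuracy worth fixing: when $P<\infty$ the product $G$ is a finite product of entire functions, not a polynomial (each factor $\cosh(z/\sqrt p)+p^{-1/2}\sinh(z/\sqrt p)$ is transcendental); but $G$ is entire with non-negative Taylor coefficients, which is all that Rankin's trick requires, so nothing is lost.
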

\begin{proof}
  Writing $n=u^2v$ where $u,v$ are integers and $v$ is squarefree then using \eqref{eq:fbd} we have that
  \begin{equation}
    \abs{g_\ck(u^2v)} \le \frac{2^{4 \Omega(u^2v)}}{\sqrt{v}}.
  \end{equation}
  Also, recall $|b(n)| \le 4^{\Omega(n)}$. Consequently, we have that
  \begin{equation} \label{eq:firstbd}
    \begin{split}
      \bigg| \sum_{\substack{p|n \Rightarrow p \in I                                        \\ \Omega(n)=2k}} \frac{
      b(n)\nu(n) g_{\ck}(n)}{\sqrt{n}}\bigg| & \le \sum_{\substack{p|uv \Rightarrow p \in I \\ \Omega(u^2v)=2k}} \frac{
      2^{6\Omega(u^2v)}\nu(u^2v)}{uv}                                                       \\
                                             & \le \sum_{\substack{p|uv \Rightarrow p \in I \\ \Omega(u^2v)=2k}} \frac{
        2^{12\Omega(uv)}\nu(u)\nu(v)}{uv},
    \end{split}
  \end{equation}
  where in the last step we used that $\nu(ab)\le \nu(a)\nu(b)$ and $\nu(u^2)\le \nu(u)$. We see that
  \begin{equation} \label{eq:secondbd}
    \begin{split}
      \sum_{\substack{p|uv \Rightarrow p \in I                                                                                  \\ \Omega(u^2v)=2k}} \frac{
      2^{12\Omega(uv)}\nu(u)\nu(v)}{uv}={} & \sum_{l=0}^k
      \sum_{\substack{p|u \Rightarrow p \in I                                                                                   \\ \Omega(u)=l}} \frac{
      2^{12\Omega(u)}\nu(u)}{u}\sum_{\substack{p|v \Rightarrow p \in I                                                          \\ \Omega(v)=2k-2l}} \frac{
      2^{12\Omega(v)}\nu(v)}{v}                                                                                                 \\
      ={}                                  & \sum_{l=0}^k \frac{\Big( 2^{12} \sum_{p \in I} \frac{1}{p}\Big)^{2k-l}}{l!(2k-2l)!}.
    \end{split}
  \end{equation}
  Observe that if $l\leq k$ then
  \begin{equation}
    \frac{(2k)!(k-l)!}{(2k-2l)!k!}=\frac{2k \cdot (2k-1)\cdots (2k-l+1)}{k\cdot (k-1)\cdots (k-l+1)}  (2k-l)\cdots (2k-2l+1) \le (4k)^l,
  \end{equation}
  where we have used that $\frac{2k-i}{k-i}(2k-l-i)\leq 4k$ when $0\leq i\leq l-1.$
  Hence, using the preceding inequality we obtain
  \begin{equation} \label{eq:thirdbd}
    \begin{split}
      \sum_{l=0}^k \frac{\Big( 2^{12} \sum_{p \in I} \frac{1}{p}\Big)^{2k-l}}{l!(2k-2l)!}
      \le{} & \frac{\Big( 2^{12} \sum_{p \in I} \frac{1}{p}\Big)^{k}}{(2k)!}
      \sum_{l=0}^k \binom{k}{l} (4k)^l \bigg(2^{12} \sum_{p \in I} \frac{1}{p} \bigg)^{k-l} \\
      ={}   & \frac{\Big( 2^{12} \sum_{p \in I} \frac{1}{p}\Big)^{k}}{(2k)!}
      \bigg(2^{12} \sum_{p \in I} \frac1p+4k \bigg)^k.
    \end{split}
  \end{equation}
  Combining \eqref{eq:firstbd}, \eqref{eq:secondbd}, and \eqref{eq:thirdbd} completes the proof.
\end{proof}

The subsequent lemma will be used to estimate the main term that arises from the mollified first moment of $L(\tfrac12,\phi_j\times f_{\ck})$.
\begin{lem} \label{lem:euler}
  Let $N\ge 1$ and $I \subset [2^{14}, \infty)$ be an interval. Then
  \begin{equation}
    \begin{split}
      \sum_{\substack{p|n \Rightarrow p \in I                                                                                                  \\ \Omega(n) \le N}} \frac{b(n) \nu(n) g_{\ck}(n)}{\sqrt{n}} ={} &\bigg(1+O\bigg(2^{-N} \exp\bigg(2^{16}\sum_{p\in I} \frac{1}{p}\bigg)\bigg) \bigg)\\
       & \times \prod_{p \in I} \bigg( 1+\frac{b(p)g_{\ck}(p)}{\sqrt{p}}+\frac{b(p)^2 g_{\ck}(p^2)}{2p}+O\bigg(\frac{1}{p^{3/2}} \bigg)\bigg).
    \end{split}
  \end{equation}
\end{lem}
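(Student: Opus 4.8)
The plan is to recognise the sum as the truncation of an Euler product and then control the tail coming from $\Omega(n)>N$. Write $S_N$ for the left-hand side of the lemma and let $S$ be the same sum without the constraint $\Omega(n)\le N$. Since the completely multiplicative function $b$ (with $|b(p)|\le4$), $\nu$, $g_\ck$, and $n\mapsto n^{-1/2}$ are all multiplicative,
\[
   S=\prod_{p\in I}E_p,\qquad E_p:=\sum_{a\ge0}\frac{b(p)^a\,g_\ck(p^a)}{a!\,p^{a/2}}.
\]
First I would expand each $E_p$ termwise: the $a=0$ term is $g_\ck(1)=1$, the $a=1$ and $a=2$ terms are exactly $b(p)g_\ck(p)p^{-1/2}$ and $b(p)^2g_\ck(p^2)(2p)^{-1}$, and for $a\ge3$ the bounds $|b(p)|\le4$, $|g_\ck(p^a)|\le2^{4a}$ from \eqref{eq:fbd}, and $2^6p^{-1/2}\le\tfrac12$ (valid because $I\subset[2^{14},\infty)$) give $\sum_{a\ge3}|b(p)|^a|g_\ck(p^a)|(a!\,p^{a/2})^{-1}\le\sum_{a\ge3}(2^6p^{-1/2})^a/a!\ll p^{-3/2}$. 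This yields $E_p=1+b(p)g_\ck(p)p^{-1/2}+b(p)^2g_\ck(p^2)(2p)^{-1}+O(p^{-3/2})$, so that $S$ equals the product displayed on the right-hand side of the lemma.

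The main step is the tail bound for $S-S_N$. Since $\Omega(n)>N$ forces $2^{\Omega(n)-N}\ge1$, inserting this factor and then dropping the constraint on $\Omega(n)$ gives
\[
   |S-S_N|\le\sum_{\substack{\Omega(n)>N\\ p\mid n\,\Rightarrow\, p\in I}}\frac{|b(n)|\,\nu(n)\,|g_\ck(n)|}{\sqrt n}\le 2^{-N}\prod_{p\in I}\Bigl(\sum_{a\ge0}\frac{2^a|b(p)|^a|g_\ck(p^a)|}{a!\,p^{a/2}}\Bigr).
\]
I would bound the local factor on the right just as above, but now retaining both cases of \eqref{eq:fbd}: with $|g_\ck(p^a)|\le2^{4a}p^{-1/2}$ for odd $a$, $|g_\ck(p^a)|\le2^{4a}$ for even $a$, $|b(p)|\le4$ and $2^7p^{-1/2}\le1$, the $a=0$ term contributes $1$, the odd $a\ge1$ terms contribute $\le2^8/p$ in total and the even $a\ge2$ terms $\le2^{14}/p$, so the local factor is $\le1+2^{15}/p$. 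Hence $|S-S_N|\le 2^{-N}\prod_{p\in I}(1+2^{15}/p)\le 2^{-N}\exp\bigl(2^{15}\sum_{p\in I}p^{-1}\bigr)$.

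It remains to put this into the multiplicative shape of the lemma. The termwise estimates above give the crude bound $|E_p-1|\le2^{12}/p$, so for $p\ge2^{14}$ we have $|E_p|\ge1-2^{12}/p\ge e^{-2^{13}/p}$ and hence $|S|=\prod_{p\in I}|E_p|\ge\exp\bigl(-2^{13}\sum_{p\in I}p^{-1}\bigr)$. Combining the last two displays, $S_N=S+O\bigl(2^{-N}\exp(2^{15}\sum_{p\in I}p^{-1})\bigr)=\bigl(1+O\bigl(2^{-N}\exp((2^{15}+2^{13})\sum_{p\in I}p^{-1})\bigr)\bigr)S$, and since $2^{15}+2^{13}<2^{16}$ this is precisely the assertion. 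I expect the only slightly delicate point to be keeping track of the absolute constants in the tail estimate so that the final exponent is at most $2^{16}$; the rest is a routine Euler-product computation.
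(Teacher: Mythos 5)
Your proposal is correct and follows essentially the same route as the paper: complete the sum over all $n$ with $p\mid n\Rightarrow p\in I$, bound the tail from $\Omega(n)>N$ by inserting $2^{\Omega(n)-N}\ge 1$ and using multiplicativity together with \eqref{eq:fbd}, expand the local factors to extract the stated Euler product, and lower-bound the complete sum by $\exp(-c\sum_{p\in I}p^{-1})$ to convert the additive error into the multiplicative form. The only differences are in the bookkeeping of the absolute constants (the paper gets $2^{15}$ where you get $2^{13}$ for the lower bound), and both versions land within the stated exponent $2^{16}$.
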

\begin{proof}
  For $\Omega(n)>N$ we have that $2^{\Omega(n)-N} \ge 1$, hence
  \begin{equation}
      \sum_{\substack{p|n \Rightarrow p \in I                                                \\ \Omega(n) \le N}} \frac{b(n) \nu(n) g_{\ck}(n)}{\sqrt{n}}
      = \sum_{\substack{p|n \Rightarrow p \in I }} \frac{b(n) \nu(n) g_{\ck}(n)}{\sqrt{n}} 
         +O\bigg( 2^{-N} \sum_{p |n \Rightarrow p \in I}\frac{
      8^{\Omega(n)} \nu(n) |g_{\ck}(n)|}{\sqrt{n}}\bigg).
      \end{equation}
  Using multiplicativity we see that
  \begin{equation}
    \sum_{p |n \Rightarrow p \in I}\frac{
    8^{\Omega(n)} \nu(n) |g_{\ck}(n)|}{\sqrt{n}}= \prod_{p \in I}\bigg(\sum_{l=0}^\infty \frac{2^{3l}|g_\ck(p^l)|}{l!p^{l/2}} \bigg)
  \end{equation}
  Using \eqref{eq:fbd} and noting $p \in I$ implies $p\ge 2^{14}$, we may bound this by
  \begin{align}
    \prod_{\substack{ p \in I         \\ }}  \left(1+\frac{2^3\abs{g_\ck(p)}}{p^{1/2}}+\frac{1}{p}\sum_{l=2}^\infty\frac{2^{3l}2^{4l}}{l!p^{(l-2)/2}} \right)      
     & \le  \prod_{\substack{ p \in I \\ }} \left(1+\frac{2^7}{p}+\frac{1}{p}\sum_{l=2}^\infty\frac{2^{3l}2^{4l}}{l!2^{14(l-2)/2}} \right) \\
     & \le  \prod_{\substack{ p \in I \\ }} \left(1+\frac{2^{15}}{p} \right) \le \exp\bigg(2^{15} \sum_{p \in I} \frac{1}{p} \bigg),
  \end{align}
  where we have used that $\log (1+t) < t$ for $t>0$. By a similar argument that we will omit, only now using $\log(1-t)>-2t$
  for $0<t<1/2$, we also have
  \begin{equation} \label{eq:lowerbdeuler}
    \sum_{p|n \Rightarrow p \in I}
    \frac{b(n)\nu(n)g_{\xi}(n)}{\sqrt{n}} \ge
    \prod_{p \in I}\bigg(1-\sum_{l=1}^\infty \frac{2^{2l}|g_\ck(p^l)|}{l!p^{l/2}} \bigg) \ge
    \exp\bigg(-2^{15} \sum_{p \in I} \frac{1}{p} \bigg).
  \end{equation}
  Also, we have that
  \begin{equation}
    \sum_{\substack{p|n \Rightarrow p \in I }} \frac{b(n) \nu(n) g_{\ck}(n)}{\sqrt{n}}
    =\prod_{p \in I}\bigg(1+\frac{b(p)g_{\ck}(p)}{\sqrt{p}}
    +\frac{b(p)^2g_{\ck}(p^2)}{2p}+O\bigg( \frac{1}{p^{3/2}}\bigg) \bigg).
  \end{equation}
  Combining the preceding estimates completes the proof.
\end{proof}

\subsection{Proof of Theorem \ref{thm:fracmoments}}

We begin with the following lemma.

\begin{lem} \label{lem:orthog}
  Let $\ck \in \widehat{\Cl}_{\mathbb Q(\sqrt{d})}$, $\ck' \in \widehat{\Cl}_{\mathbb Q(\sqrt{d'})}$.
  Each of the following hold.
  \begin{enumerate}
    \item Suppose at least one of $\ck$ and $\ck'$ is not a genus character. Then
          \begin{equation}
            \sum_{p \le x}\frac{\lambda_{\ck}(p)\lambda_{\ck'}(p)}{p}=O(1).
          \end{equation}
    \item Suppose $\ck$, $\ck'$ are both genus characters corresponding to

          $d=d_1d_2$ and $d'=d_1'd_2'$. Let $B_{\ck,\ck'}=\#\{ d_j=d_i' : (i,j) \in \{1,2\}^2\}$. Then
          \begin{equation}
            \sum_{p \le x}\frac{\lambda_{\ck}(p)\lambda_{\ck'}(p)}{p}=B_{\ck,\ck'} \log \log x+O(1).
          \end{equation}
    \item Suppose $\ck$ is not a genus character. Then
          \begin{equation}
            \sum_{p \le x} \frac{\lambda_{\ck}(p)^2}{p}=\log \log x+O(1).
          \end{equation}
    \item Suppose $\ck$ is a genus character. Then
          \begin{equation}
            \sum_{p \le x} \frac{\lambda_{\ck}(p)^2}{p}=2 \log \log x+O(1).
          \end{equation}
  \end{enumerate}
\end{lem}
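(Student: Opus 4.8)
The plan is to relate each of the four sums to the order of the pole at $s=1$ of a Rankin--Selberg convolution and then to read off that order from the factorisations in Section~\ref{sec:Rankin-Selberg}. Exactly as for the Euler product \eqref{eq:eulerproduct}, the Rankin--Selberg convolution of the two theta series satisfies
\[
  \log L(s, f_\ck \times f_{\ck'}) = \sum_p \frac{\lambda_\ck(p)\lambda_{\ck'}(p)}{p^s} + G(s),
\]
where $G(s)$ is a Dirichlet series converging absolutely for $\Re(s) > \tfrac12$; here one uses that the Satake parameters of the theta series satisfy $|\alpha_\ck^{(r)}(p)| \le 1$ (they are $0$, $\pm 1$, or conjugate roots of unity, by \eqref{eq:euler-product-class-group-L}). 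Since $L(s, f_\ck \times f_{\ck'})$ continues meromorphically to $\C$ with at most a pole at $s=1$ and does not vanish on $\Re(s)=1$, the prime number theorem for such $L$-functions --- i.e.\ Selberg orthogonality, known unconditionally in this setting, see \cite{LiuWangYe:2005, LiuYe:2005, LiuYe:2006, AvdispahicSmajlovic:2010} and the discussion around \eqref{eq:contribution from secondary terms} --- together with partial summation yields
\[
  \sum_{p \le x} \frac{\lambda_\ck(p)\lambda_{\ck'}(p)}{p} = m_{\ck,\ck'}\,\log\log x + O(1),
\]
where $m_{\ck,\ck'}$ is the order of the pole of $L(s, f_\ck \times f_{\ck'})$ at $s=1$. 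Parts (3) and (4) are the special case $\ck' = \ck$, so everything reduces to computing $m_{\ck,\ck'}$.

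For the computation I would use that a genus character $\ck_{d_1,d_2}$ makes $L(s, f_{\ck_{d_1,d_2}})$ split as $L(s,\chi_{d_1})L(s,\chi_{d_2})$ (Kronecker's factorisation \eqref{eq:Kronecker-factorization}), whereas for a non-genus $\ck$ the form $f_\ck$ is a primitive cusp form of level exactly $|d|$ (Section~\ref{sec:theta-series}). Performing the local computations as in \eqref{eq:RS-factorization-genus-characters}: if $\ck = \ck_{d_1,d_2}$ and $\ck' = \ck_{d_1',d_2'}$ are both genus characters then $L(s, f_\ck \times f_{\ck'}) = \prod_{i,j\in\{1,2\}} L(s, \chi_{d_i}\chi_{d_j'})$, so $m_{\ck,\ck'} = \#\{(i,j): \chi_{d_i} = \chi_{d_j'}\}$; since $d, d'$ are squarefree and $\equiv 1\bmod 4$ the $d_i$ and $d_j'$ are fundamental discriminants, hence $\chi_{d_i} = \chi_{d_j'}$ iff $d_i = d_j'$, and this count is $B_{\ck,\ck'}$, giving (2) and, when $\ck' = \ck$, giving $m = 2$ and hence (4). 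If exactly one of $\ck, \ck'$ is a genus character, say (by symmetry of the sum) $\ck = \ck_{d_1,d_2}$ is and $\ck'$ is not, then $L(s, f_\ck \times f_{\ck'}) = L(s, f_{\ck'} \otimes \chi_{d_1})L(s, f_{\ck'} \otimes \chi_{d_2})$ is a product of Dirichlet-character twists of the cusp form $f_{\ck'}$, hence entire, so $m = 0$. If neither is a genus character then $f_\ck$ and $f_{\ck'}$ are primitive cusp forms of levels $|d|$ and $|d'|$; since $d \ne d'$ (as in Theorem~\ref{thm:fracmoments}) and $d, d' < 0$ we have $|d| \ne |d'|$, so they are non-isomorphic, and, both being self-dual, $L(s, f_\ck \times f_{\ck'})$ is entire, again $m = 0$ --- these two sub-cases together prove (1). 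Finally, for (3) ($\ck$ non-genus, $\ck' = \ck$), $L(s, f_\ck \times f_\ck)$ is the Rankin--Selberg square of the self-dual cuspidal representation attached to $f_\ck$ and has a simple pole at $s=1$, so $m = 1$.

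The step I expect to be the crux is the last sub-case of (1), where neither character is a genus character: there one must exclude a pole at $s=1$, i.e.\ show $f_\ck \not\cong f_{\ck'}$, and this is exactly where the hypothesis $d \ne d'$ is essential, since it forces the conductors $|d|$ and $|d'|$ of the two primitive (self-dual, dihedral) theta series to differ; the remaining analytic point, non-vanishing of $L(s, f_\ck \times f_{\ck'})$ on $\Re(s)=1$, is classical, and because $f_\ck, f_{\ck'}$ are dihedral this $L$-function in fact factors over the biquadratic field $\Q(\sqrt d, \sqrt{d'})$ into Hecke $L$-functions, reducing it to the classical non-vanishing of Hecke $L$-functions on $\Re(s)=1$. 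I would also remark that parts (3) and (4) admit short self-contained proofs avoiding Rankin--Selberg theory: for (4), by \eqref{eq:Fourier-coefficients when genus} one has $\lambda_\ck(p) = \chi_{d_1}(p) + \chi_{d_2}(p)$, so $\lambda_\ck(p)^2 = \chi_{d_1}(p)^2 + 2\chi_d(p) + \chi_{d_2}(p)^2$, and the claim follows from $\sum_{p\le x} 1/p = \log\log x + O(1)$ and $\sum_{p\le x}\chi_d(p)/p = O(1)$; for (3) one combines the Hecke relation $\lambda_\ck(p)^2 = \lambda_\ck(p^2) + \chi_d(p)$ for $p\nmid d$ with the identity $\lambda_\ck(p^2) = \lambda_{\ck^2}(p) + 1$ for $p\nmid d$ (from the ideal theory, cf.\ \eqref{eq:basics of lambda_chi}), the bound $|\lambda_{\ck^2}(p)| \le d(p)$, and the classical non-vanishing of $L_K(s, \ck^2)$ on $\Re(s)=1$ (valid since $\ck^2 \ne 1$ as $\ck$ is not a genus character), which gives $\sum_{p\le x}\lambda_{\ck^2}(p)/p = O(1)$.
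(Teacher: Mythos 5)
Your proof is correct and follows essentially the same route as the paper: the authors likewise dispose of parts (1) and (3) by citing the unconditional Selberg orthogonality results of \cite{LiuWangYe:2005,LiuYe:2005,LiuYe:2006,AvdispahicSmajlovic:2010} (the same references you invoke for the prime number theorem of $L(s,f_\ck\times f_{\ck'})$), and of parts (2) and (4) by writing $\lambda_{\ck_{d_1,d_2}}=\chi_{d_1}*\chi_{d_2}$ and appealing to Dirichlet's bound $\sum_{p\le x}\chi(p)/p=O_q(1)$ for non-principal $\chi$, exactly as in your elementary treatment of (4). Your additional bookkeeping --- identifying the coefficient of $\log\log x$ with the order of the pole of the Rankin--Selberg convolution, and using the level/nebentypus to see that $f_\ck\not\cong f_{\ck'}$ when $d\neq d'$ --- is a correct and more explicit account of what those citations deliver, and it rightly pinpoints that the hypothesis $d\neq d'$ (implicit in the lemma via its application) is what rules out a pole in part (1).
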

\begin{proof}
  The first and third claims follow from
  \cite[Corollary 1.5]{LiuWangYe:2005}, \cite[Corollary 1.5]{LiuYe:2005}.
  Even if our situation is self-contragredient, we note that this extends to the non-self-contragredient case by \cite{LiuYe:2006,AvdispahicSmajlovic:2010}.
  Using \eqref{eq:Fourier-coefficients when genus}, the second and fourth claims are easy consequences of a classical result of Dirichlet, i.e. for any non-principal Dirichlet character $\chi \Mod q$ we have
  \begin{equation}
    \sum_{p \le x} \frac{\chi(p)}{p}=O_q(1).\qedhere
  \end{equation}
\end{proof}

\begin{proof}[Proof of Theorem \ref{thm:fracmoments}] Recall the definitions of $N_r$ in \eqref{eq:Ndef}, $I_r$ in \eqref{eq:Idef}, and $S_r$ in \eqref{eq:Srsetdef}.   Let $\mathcal M_r(j,\ck,\ck')$ be as given in \eqref{eq:molldef2} and $\mathcal L(j,\ck)$ be as given in \eqref{eq:script-L-def}.

  Observe that $1_{(T,2T]}(t)\ll h(t)$ where $1_{(T,2T]}$ is the indicator function of the interval $(T,2T]$ and $h$ is given in \eqref{def:h}. Recalling that $S_0 \cup \cdots \cup S_R$ is a disjoint union of all $j$'s, we have
  \begin{equation} \label{eq:fracbd}
    \begin{split}
      \sum_{T < t_{j} \le 2T} \sqrt{\mathcal L(j, \ck)\mathcal L(j, \ck')} & \ll \sum_{r=0}^{R}\sum_{j \in S_r} \sqrt{\mathcal L(j, \ck)\mathcal L(j, \ck')} h(t_j).
    \end{split}
  \end{equation}

  We first consider $r=1,\ldots,R-1$. Using non-negativity of the central $L$-values we have $\mathcal L(j,\ck) \ge 0$. Also  $\mathcal M_r(j,\ck,\ck')>0$,
  which follows from \eqref{eq:eulerprod}. Hence, applying Lemma \ref{lem:keyineq} we have for each $r=1,\ldots, R-1$
  \begin{equation} \label{eq:mixedfirstbd}
    \begin{split}
      2 \sum_{j \in S_r} \sqrt{\mathcal L(j,\ck) \mathcal L(j,\ck')} h(t_j) \le{} & C(r) \sum_{t_j>0} \left( \mathcal L(j,\ck)\mathcal M_r(j,\ck,\ck') +\mathcal L(j,\ck')\mathcal M_r(j,\ck',\ck) \right)                                                 \\
                                                                                & \times \left( \left( \frac{P_{I_{r+1}}(b_{\ck},j) 2e^2}{N_{r+1}}\right)^{N_{r+1}}+\left(\frac{P_{I_{r+1}}(b_{\ck'},j) 2e^2}{N_{r+1}} \right)^{N_{r+1}}  \right) h(t_j)
    \end{split}
  \end{equation}
  where $C(r)$ is as given in \eqref{eq:Crdef} and satisfies $C(r) \ll 1$.
  Applying \eqref{eq:molldef2} and Lemma \ref{lem:expand}, which is valid by \eqref{eq:recipbd}, we see that
  
  \begin{align} \label{eq:momentest1}
      \sum_{t_j>0} \mathcal L(j,\ck) \mathcal M_r(j,\ck,\ck') & P_{I_{r+1}}(b,j)^{N_{r+1}} h(t_j)
 \\     &= C_{M,d}T^2(\log x_r)^{\delta_{\ck}-\delta_{\ck'}} \mathcal A_r(b,\ck,\ck')
      +O(T^{1+1/C_1+\varepsilon}),
 \end{align} 

  for $b=b_{\ck}$ or $b=b_{\ck'}$  where
  \begin{equation}
    \mathcal A_r(b,\ck,\ck')=N_{r+1}! \sum_{\substack{p |n \Rightarrow p \in I_{r+1} \\ \Omega(n)=N_{r+1}}} \frac{b(n)\nu(n) g_{\ck}(n)}{\sqrt{n}} \prod_{q=1}^r \sum_{\substack{p |n \Rightarrow p \in I_{q} \\ \Omega(n)\le N_{q}}} \frac{b_{\ck,\ck'}(n)\nu(n) g_{\ck}(n)}{\sqrt{n}}.
  \end{equation}
  To estimate the error term we used that $ \prod_{q=1}^{r+1}(2^{12}x_q)^{N_q/2} \le  T^{1/N_1 +\cdots +1/N_{r+1}} \le T^{1/C_1}$, by \eqref{eq:recipbd}, and $\prod_{q=1}^r\sum_{k=0}^{N_q} \frac{1}{k!}<e^r<T^{\varepsilon}$ by \eqref{eq:Rbd2}.

  We note that, $\sum_{p \in I_1} p^{-1} \sim \log \log T \sim N_1/(2C_1)$, and for each $r=2,\ldots, R$
  \begin{equation} \label{eq:mertens}
    \sum_{p \in I_r} \frac{1}{p}=\log \frac{\log x_r}{\log x_{r-1}}+O(1)=\frac{N_r}{C_1}+O(\log N_r).
  \end{equation}
  Hence, using \eqref{eq:mertens} and that $C_1$ is sufficiently large along with Lemma \ref{lem:euler} we have for $r=1,\ldots, R$ that
  \begin{equation} \label{eq:eulerbd2}
    \begin{split}
       & \prod_{q=1}^r \sum_{\substack{p |n \Rightarrow p \in I_{q}                                                                                                                                          \\ \Omega(n)\le N_{q}}} \frac{b_{\ck,\ck'}(n)\nu(n) g_{\ck}(n)}{\sqrt{n}} \\
       & \ll \prod_{q=1}^r  \bigg(1+O(e^{-N_q/2})\bigg) \prod_{p \in I_q} \bigg(1+\frac{b_{\ck,\ck'}(p)g_{\ck}(p)}{\sqrt{p}}+\frac{b_{\ck,\ck'}(p)^2g_{\ck}(p^2)}{2p} +O\left(\frac{1}{p^{3/2}}\right)\bigg) \\
       & \ll \prod_{p \le x_r} \bigg(1+\frac{b_{\ck,\ck'}(p)g_{\ck}(p)}{\sqrt{p}}+\frac{b_{\ck,\ck'}(p)^2g_{\ck}(p^2)}{2p} +O\left(\frac{1}{p^{3/2}}\right) \bigg).
    \end{split}
  \end{equation}
  By Lemma \ref{lem:bound} with $k=N_{r+1}/2$ and \eqref{eq:mertens} we have for each $r=0,\ldots, R-1$ and each $b=b_{\ck},b_{\ck'}$
  \begin{equation} \label{eq:markovsave}
    \begin{split}
      N_{r+1}! \sum_{\substack{p |n \Rightarrow p \in I_{r+1}                                                                      \\ \Omega(n)=N_{r+1}}} \frac{b(n)\nu(n) g_{\ck}(n)}{\sqrt{n}}
       & \le  e^{O(N_{r+1})}  \bigg( \frac{N_{r+1}^2}{2C_1} \bigg)^{N_{r+1}/2}\le \bigg( \frac{N_{r+1}}{C_1^{1/3}}\bigg)^{N_{r+1}}
    \end{split}
  \end{equation}
  since $C_1$ is sufficiently large.

  To bound the Euler product on the right-hand side of \eqref{eq:eulerbd2} observe that
  \begin{equation}
    g_{\ck}(p)=\frac{\lambda_{\ck}(p)}{\sqrt{p}} \textrm{ and } g_{\ck}(p^2)=1+\frac{\lambda_{\ck}(p^2)}{p}=1+O\bigg(\frac{1}{p}\bigg)\end{equation} as follows from \eqref{eq:h2} and \eqref{eq:fdef}. Using these formulas and recalling \eqref{eq:b12def} we see that
  \begin{equation} \label{eq:pnt}
    \begin{split}
      \sum_{p \le x_r} & \bigg( \frac{b_{\ck,\ck'}(p)g_{\ck}(p)}{\sqrt{p}}
      +\frac{b_{\ck,\ck'}(p)^2g_{\ck}(p^2)}{2p}+O(p^{-3/2})\bigg)                              \\
      =                & -\frac{3}{8}\sum_{p \le x_r} \frac{\lambda_{\ck}(p)^2}{p}+\frac{1}{8}
      \sum_{p \le x_r}\frac{\lambda_{\ck'}(p)^2}{p}+\frac14 \sum_{p \le x_r}
      \frac{\lambda_{\ck}(p) \lambda_{\ck'}(p)}{p}+ O(1).
    \end{split}
  \end{equation}

  Let $B_{\ck,\ck'}$ be as in Lemma \ref{lem:orthog} and define
  \begin{equation} \label{eq:etadef}
    \eta_{\ck,\ck'}=\begin{cases}
      1/4                       & \text{ if } \ck^2 \neq 1 \text{ and } (\ck')^2 \neq 1, \\
      1/8                       & \text{ if } \ck^2 \neq 1 \text{ and } (\ck')^2 = 1,    \\
      5/8                       & \text{ if } \ck^2 = 1 \text{ and } (\ck')^2 \neq 1,    \\
      1/2-\tfrac14 B_{\ck,\ck'} & \text{ if }  \ck^2=1 \text{ and } (\ck')^2=1.
    \end{cases}
  \end{equation}
  Applying Lemma \ref{lem:orthog} to estimate the sums on the right-hand side of \eqref{eq:pnt} we conclude for each $r=1,\ldots, R$ that
  \begin{equation} \label{eq:etaest}
    \sum_{p \le x_r} \bigg( \frac{b_{\ck,\ck'}(p)g_{\ck}(p)}{\sqrt{p}}
    +\frac{b_{\ck,\ck'}(p)^2g_{\ck}(p)}{2p}\bigg)=-\eta_{\ck,\ck'} \log \log x_r+O(1).
  \end{equation}
  Combining the preceding estimate along with \eqref{eq:eulerbd2} and \eqref{eq:markovsave} we have
  for each $r=1,\ldots, R-1$ and $b=b_{\ck}$ or $b=b_{\ck'}$
  \begin{equation} \label{eq:arbd}
    \mathcal A_r(b,\ck,\ck') \ll \bigg(\frac{N_{r+1}}{C_1^{1/3}} \bigg)^{N_{r+1}}  (\log x_r)^{-\eta_{\ck,\ck'}}.
  \end{equation}
  Applying \eqref{eq:arbd} in \eqref{eq:momentest1}, then using the resulting bound in \eqref{eq:mixedfirstbd} along with an analogous bound for the second term in \eqref{eq:mixedfirstbd} which follows by symmetry we get  \begin{equation} \label{eq:sumbd}
    \sum_{j \in S_r} \sqrt{\mathcal L(\ck,j) \mathcal L(\ck',j)} h(t_j) \ll T^2 \bigg(\frac{2e^2}{C_1^{1/3}} \bigg)^{N_{r+1}} (\log x_r)^{-\theta_{\ck,\ck'}}+T^{1+1/C_1+\varepsilon}
  \end{equation}
  where
  \[
    \theta_{\ck,\ck'}=\eta_{\ck,\ck'}+\delta_{\ck'}-\delta_{\ck}.
  \]

  Recalling the definition of $\eta_{\ck,\ck'}$ given in \eqref{eq:etadef}
  and that $\delta_{\ck}=1/2$ if $\ck^2=1$ and $\delta_{\ck}=1/4$ if $\ck^2 \neq 1$ , we get
  \begin{equation}
    \theta_{\ck,\ck'}= \begin{cases}
      1/4                       & \text{ if } \ck^2 \neq 1 \text{ and } (\ck')^2 \neq 1, \\
      3/8                       & \text{ if } \ck^2 \neq 1 \text{ and } (\ck')^2 = 1,    \\
      3/8                       & \text{ if } \ck^2 = 1 \text{ and } (\ck')^2 \neq 1,    \\
      1/2-\tfrac14 B_{\ck,\ck'} & \text{ if }  \ck^2=1 \text{ and } (\ck')^2=1.
    \end{cases}
  \end{equation}
  Since $d,d'$
  are squarefree and $d \neq d'$ we have  $B_{\ck,\ck'}$ is equal to either $0$ or $1$, so that
  \begin{equation} \label{eq:detabd}
    \theta_{\ck,\ck'} \ge \tfrac{1}{4}.
  \end{equation}
  Also note $N_{r+1}=2\lceil C_1 \log N_{r} \rceil$ and $C_1$ is sufficiently large so that
  \begin{equation} \label{eq:Nrbd}
    \left(\frac{2e^2}{C_1^{1/3}}\right)^{2 \lceil C_1 \log N_r \rceil}<N_r^{-2}.
  \end{equation}
  Using that $\log x_r=(\log T)/N_r^2$ and $\theta_{\ck,\ck'} \le 1/2$,  along with \eqref{eq:Nrbd} in \eqref{eq:sumbd}
  we get that
  \begin{equation} \label{eq:Rbdmid}
    \sum_{j \in S_r} \sqrt{\mathcal L(j,\ck) \mathcal L(j,\ck')} h(t_j) \ll\frac{T^2}{N_r (\log T)^{ \theta_{\ck,\ck'} }}.
  \end{equation}

  It remains to consider the contribution to the fractional moment for $j $ lying in $S_R$ or $S_0$. The contribution from these $j$ is bounded similar to before and we will only give brief arguments. Using Lemma \ref{lem:keyineq} and non-negativity as before we have
  \begin{equation} \label{eq:lastr}
    \sum_{j \in S_R} \sqrt{\mathcal L(j,\ck) \mathcal L(j,\ck')}  h(t_j)\ll
    \sum_{t_j>0} (\mathcal L(j,\ck) \mathcal M_R(\ck,\ck',j)+\mathcal L(j,\ck') \mathcal M_R(\ck',\ck,j)) h(t_j).
  \end{equation}
  Applying Lemma \ref{lem:expand} and using \eqref{eq:eulerbd2} along with \eqref{eq:etaest} we have
  \begin{equation} \label{eq:lastr2}
    \begin{split}
       & \sum_{t_j>0} \mathcal L(j,\ck) \mathcal M_R(\ck,\ck',j) h(t_j)                                                                                     \\
       & \ll T^2 (\log x_R)^{\delta_{\ck}-\delta_{\ck'}}
      \prod_{p \le x_R} \bigg(1+\frac{b_{\ck,\ck'}(p)g_{\ck}(p)}{\sqrt{p}}+\frac{b_{\ck,\ck'}(p)^2g_{\ck}(p^2)}{2p} +O\left(\frac{1}{p^{3/2}}\right) \bigg) \\
       & \ll T^2 (\log x_R)^{-\theta_{\ck,\ck'}} \ll T^2(\log T)^{-\theta_{\ck,\ck'}}.
    \end{split}
  \end{equation}
  where in the last step we used that $\log x_R=(\log T)/N_R^2 \ge (\log T)e^{-2C_1}$ by \eqref{eq:NRbd1}. Using \eqref{eq:lastr2} in \eqref{eq:lastr} and the analogous bound for the second term in \eqref{eq:lastr} which follows by symmetry we have
  \begin{equation} \label{eq:Rbdlast}
    \sum_{j \in S_R} \sqrt{\mathcal L(j,\ck) \mathcal L(j,\ck')} h(t_j) \ll T^2 (\log T)^{-\theta_{\ck,\ck'}}.
  \end{equation}

  Using Lemma \ref{lem:keyineq} and non-negativity once again we get
  \begin{equation} \label{eq:firstr2}
    \begin{split}
       & 2\sum_{j \in S_0} \sqrt{ \mathcal L(j,\ck) \mathcal L(j,\ck')}  h(t_j) \\
       & \qquad \le
      \sum_{t_j>0} \left(\mathcal L(j,\ck)+\mathcal L(j,\ck')\right)  \left( \left( \frac{P_{I_{1}}(b_{\ck},j) 2e^2}{N_{1}}\right)^{N_{1}}+\left(\frac{P_{I_{1}}(b_{\ck'},j) 2e^2}{N_{1}} \right)^{N_{1}} \right)
      h(t_j).
    \end{split}
  \end{equation}
  Applying Lemma \ref{lem:expand} and using \eqref{eq:markovsave} we get that
  \begin{equation} \label{eq:firstr}
    \begin{split}
      \sum_{t_j>0} \mathcal L(j,\ck) \bigg( \frac{P_{I_{1}}(b,j) 2e^2}{N_{1}}\bigg)^{N_{1}} h(t_j) \ll T^2 \bigg(\frac{N_1}{C_1^{1/3}} \bigg)^{N_1} \bigg(\frac{2 e^2}{N_1} \bigg)^{N_1}+T^{1+\varepsilon}  \ll T^2 e^{-N_1}
    \end{split}
  \end{equation}
  for each $b=b_{\ck},b_{\ck'}$.
  Using \eqref{eq:firstr} in \eqref{eq:firstr2} and recalling $C_1>100$ so $N_1 \ge 200 \log \log T$, we have
  \begin{equation} \label{eq:Rbdfirst}
    \sum_{j \in S_0} \sqrt{\mathcal L(j,\ck) \mathcal L(j,\ck')} h(t_j) \ll T^2 e^{-N_1} \le T^2(\log T)^{-200}.
  \end{equation}

  To complete the proof we use \eqref{eq:Rbdmid}, \eqref{eq:Rbdlast}, \eqref{eq:Rbdfirst} in \eqref{eq:fracbd} and then use \eqref{eq:recipbd} to bound the sum over $r$ to get that
  \[
    \sum_{T < t_{j} \le 2T} \sqrt{\mathcal L(j, \ck)\mathcal L(j, \ck')} \ll T^2 (\log T)^{-\theta_{\ck,\ck'}} \le T^2 (\log T)^{-1/4},
  \]
  where in the last step we used \eqref{eq:detabd}.
\end{proof}

\section{Proof of the main theorem} \label{finalproof}

In this section we prove the following Proposition, which together with Theorem \ref{thm:fracmoments} and \eqref{eq:phibd} will be used to prove Theorem \ref{thm:mainresult}.
\begin{proposition}\label{lem:local-bound-to-counting-bound} Let $\G=\slz$, and $z,w\in \H$. Assume that for $T \geq T_0$ we have
  \begin{equation} \label{eq5.1}
    \sum_{T < t_j\leq 2T}
    \abs{\phi_j(z)\phi_j(w)}=O
    \left(\frac{T^2}{(\log T)^{\eta}}\right)
  \end{equation}
  for some fixed $\eta >0$. Then
  \begin{equation}
    N(X;z,w)=\frac{2\pi}{\tmop{vol}(\GmodH)}X+O\left(\frac{X^{2/3}}{(\log X)^{2\eta/3}}\right).
  \end{equation}
\end{proposition}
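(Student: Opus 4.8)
The plan is to run the classical Selberg-type treatment of the hyperbolic circle problem, as in \cite[Chapter~12]{Iwaniec:2002} and the refinements cited in the introduction, using the hypothesis \eqref{eq5.1} in place of the trivial bound on the spectral side. First I would sandwich $N(X;z,w)$ between $\sum_{\g\in\G}k^{\pm}(u(z,\g w))$ for a pair of point-pair invariant kernels with $k^{-}\le\mathbf 1_{\{4u+2\le X\}}\le k^{+}$ that coincide with the sharp cutoff except on a window of width $\asymp Y$ in the $u$-variable, where $1\le Y\le X$ is a parameter to be chosen. Expanding the associated automorphic kernels spectrally, the constant eigenfunction produces the main term $\tfrac{2\pi}{\vol{\GmodH}}X$ up to an error $O(Y)$ from the smoothing window, the continuous spectrum contributes $O(X^{1/2}\log X)$, and the cuspidal part equals $\sum_{j\ge1}h^{\pm}(t_j)\phi_j(z)\overline{\phi_j(w)}$, where $h^{\pm}$ denotes the Selberg/Harish-Chandra transform of $k^{\pm}$.

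The standard analysis of this transform gives, for every $A\ge0$, the bound $\abs{h^{\pm}(t)}\ll_A X^{1/2}(1+t)^{-3/2}\min\bigl(1,(X/(tY))^{A}\bigr)$ for $t\ge1$; the $X^{1/2}t^{-3/2}$ is the size of the transform of the sharp cutoff, while the $\min$-factor reflects the width-$Y$ smoothing and forces rapid decay past the transition point $t\asymp T:=X/Y$. Passing to absolute values in the spectral expansion,
\begin{equation}\label{eq:countbd-plan}
  \abs{N(X;z,w)-\frac{2\pi}{\vol{\GmodH}}X}\ll Y+X^{1/2}\sum_{j\ge1}\frac{\min\bigl(1,(T/t_j)^{A}\bigr)}{(1+t_j)^{3/2}}\abs{\phi_j(z)\phi_j(w)}+X^{1/2}\log X .
\end{equation}
As a sanity check, feeding in the trivial bound $\sum_{U<t_j\le2U}\abs{\phi_j(z)\phi_j(w)}\ll U^2$ and optimizing in $Y$ recovers Selberg's $O(X^{2/3})$. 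All of the ingredients here---the construction of $k^{\pm}$, the $O(Y)$ smoothing loss, and the transform bound---are classical, and I would simply quote them from \cite[Chapter~12]{Iwaniec:2002}.

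It then remains to estimate the cuspidal sum using \eqref{eq5.1}. I would split the sum over $j$ into dyadic blocks $U<t_j\le2U$; for $U\ge T_0$ apply \eqref{eq5.1}, and bound the $O_{z,w}(1)$ terms with $t_j<T_0$ trivially (contributing $O_{z,w}(X^{1/2})$). The blocks with $U\le T$ then contribute $\ll X^{1/2}\sum_{U\le T}U^{1/2}(\log U)^{-\eta}$, and those with $U>T$ contribute $\ll X^{1/2}\sum_{U>T}T^{A}U^{1/2-A}(\log U)^{-\eta}$ (take $A>\tfrac12$); in both cases the dyadic terms grow, respectively decay, geometrically, so each sum is governed by the blocks with $U\asymp T$, where $\log U\asymp\log T$, giving a total of $\ll X^{1/2}T^{1/2}(\log T)^{-\eta}+O_{z,w}(X^{1/2})$. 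Inserting this into \eqref{eq:countbd-plan} and recalling $T=X/Y$,
\begin{equation}\label{eq:final-plan}
  \abs{N(X;z,w)-\frac{2\pi}{\vol{\GmodH}}X}\ll Y+\frac{X}{Y^{1/2}\bigl(\log(X/Y)\bigr)^{\eta}}+X^{1/2}\log X .
\end{equation}
I would finish by choosing $Y=X^{2/3}(\log X)^{-2\eta/3}$, so that $T=X^{1/3}(\log X)^{2\eta/3}$ and $\log(X/Y)\asymp\log X$: then the first two terms in \eqref{eq:final-plan} are each $\asymp X^{2/3}(\log X)^{-2\eta/3}$ and the third is $o\bigl(X^{2/3}(\log X)^{-2\eta/3}\bigr)$. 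This choice is admissible as soon as $X$ is large enough that $T\ge T_0$, and for bounded $X$ the claimed estimate is trivial. The genuinely new input is just the substitution of \eqref{eq5.1} for the trivial spectral bound; the one point requiring care is that the weight $t_j^{-3/2}$ in \eqref{eq:countbd-plan} is precisely what makes the dyadic sum concentrate on the blocks with $t_j\asymp T$, so that the per-block saving $(\log T)^{-\eta}$ survives in the full sum and translates, after the optimization, into the saving $(\log X)^{-2\eta/3}$ in the circle problem.
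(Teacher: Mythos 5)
Your proposal is correct and follows essentially the same route as the paper: sandwiching $N(X;z,w)$ between smoothed point-pair invariants, applying the pre-trace formula, bounding the Selberg--Harish-Chandra transform by $X^{1/2}t^{-3/2}$ up to the transition point $t\asymp X/Y$ with polynomial decay beyond it, feeding the hypothesis into a dyadic decomposition concentrated at $t_j\asymp X/Y$, and optimizing to $Y\asymp X^{2/3}(\log X)^{-2\eta/3}$ (the paper's smoothing parameter $\delta$ is your $Y/X$, and its explicit kernels give your $\min$-factor with $A=3/2$, which is all you need). The one assertion not simply quotable from \cite[Chapter~12]{Iwaniec:2002} is that the continuous spectrum contributes only $O(X^{1/2}\log X)$ --- there the Eisenstein part is lumped with the cuspidal part via the local Weyl law and together they give $O(X^{2/3})$; for $\slz$ one needs a genuine extra input such as the sup-norm bound $E(z,\tfrac12+it)\ll_\varepsilon y^{1/2}(1+\abs{t})^{1/3+\varepsilon}$, which is what the paper invokes to obtain the (sufficient) bound $O(X^{1/2}\delta^{-1/6+\varepsilon})$ for that term.
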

\begin{remark} \label{remark5.2}
  We will also require a bound for the contribution of the continuous spectrum.  Known sup-norm bounds for the Eisenstein series yield sufficient bounds for our purposes. More precisely, Nordentoft \cite{Nordentoft:2021d}, improving on Young \cite{Young:2018}, proved the bound
  \begin{equation} \label{eq:supnormbd} E(z, 1/2 + it) \ll_{\varepsilon} y^{1/2}
    (|t| + 1)^{1/3+\varepsilon}.
  \end{equation}
\end{remark}
The proof of Proposition \ref{lem:local-bound-to-counting-bound} is quite standard, but we include a detailed proof for completeness.
Let $k(u)=\mathbf{1}_{u\leq (X-2)/4}$
be the characteristic kernel,
and  consider the point-pair invariant \begin{equation}
  K(z,w)=\sum_{\g\in \G}k(u(z,\g w)).
\end{equation}
We want to apply Selberg's pre-trace formula \cite[Theorem 7.4]{Iwaniec:2002} to $K(z,w)$, but as we shall see the Selberg--Harish-Chandra transform of $k$ (see \cite[Equation (1.62)]{Iwaniec:2002}) does not decay quickly enough for this to be applied directly. The Selberg--Harish-Chandra transform of the
indicator function $\mathbf{1}_{[0,(\cosh(R)-1)/2]}$
of the interval $[0,(\cosh(R)-1)/2]$  is given by
\begin{equation} \label{eq:hRdef}
  h_R(t)=2^{5/2}\int_0^R\sqrt{\cosh R-\cosh(r)}\cos(rt)dr=2\pi\sinh(R)P_{-\frac{1}{2}+it}^{-1}(\cosh R),
\end{equation}
where $P_\mu^\nu(z)$ is the associated Legendre function of the first kind (compare \cite[Equation (2.7)]{Chamizo:1996a}).
In particular, $h_R(t)$ is entire in $t$ and bounded by $O(R e^{R/2+R\abs{\Im(t)}})$.
Applying \cite[8.723.1]{GradshteynRyzhik:2007} we find that for $\Im(t)\leq 1-\e$ with $t\neq 0$ we have
\begin{equation}
  h_R(t)=\sqrt{2\pi\sinh R}(f(t)+f(-t)),
\end{equation}
with
\begin{equation}
  f(t)=\frac{\Gamma(it)}{\Gamma(3/2+it)}e^{itR}F\left(-\frac{1}{2},\frac{3}{2},1-it; \frac{1}{(1-e^{2R})}\right),
\end{equation}
where $F$ is Gauss' hypergeometric function.
For $R \geq \log \sqrt 2$, and $1\pm it$ bounded away from positive integers
the hypergeometric function is close to $1$ (see \cite[Equation (2.9)]{Chamizo:1996a}) and Stirling's formula gives the decay rate
\begin{equation} \label{eq:hRbd}
  h_R(t)=O\left(\frac{e^{\Re(it)R+R/2}}{(1+{\abs{t})}^{3/2}}\right).
\end{equation}

It turns out to be convenient smooth out the function $k$ to get better convergence on the spectral side. We do this as follows (compare \cite[Section 5]{PetridisRisager:2018}, \cite{Chamizo:1996a}):
Given kernels $k_1$ and $k_2$ we define their (hyperbolic) convolution as
\begin{equation}
  (k_1* k_2) (u(z,w))=\int_\H k_1(u(z,v))k_2(u(v,w))d\mu(v).
\end{equation}
The Selberg--Harish-Chandra transform
converts convolution into pointwise product
(see \cite[Equations (2.11)--(2.13)]{Chamizo:1996a}), i.e.
\begin{equation}
  \label{eq:convolution}
  h_{k_1*k_2}(t)=h_{k_1}(t) h_{k_2}(t).
\end{equation}
With this in mind, let $\delta>0$ be a small parameter sufficiently close to zero (to be chosen later), and consider
\begin{equation}
  k_\delta(u)= \frac{1}{4\pi \sinh^2(\delta/2)} \mathbf{1}_{[0,(\cosh \delta-1)/2]}(u).
\end{equation}
This kernel satisfies
\begin{equation}\int_\H k_\delta(u(z,w))d\mu(z)=1.
\end{equation}
Recall that $\cosh d_\H(z,w)=2u(z,w)+1$.
Now let $X>4$
and let $Y = \arccosh (X/2) >0$. We define two smooth approximations of the characteristic kernel
\begin{equation}
  k^{\pm}(u)=(\mathbf{1}_{[0,(\cosh(Y\pm \delta )-1)/2]}*k_\delta)(u).
\end{equation}
Using the triangle inequality for $d_\H$ we see (see also \cite[Equation (5.4)]{PetridisRisager:2017}) that
\begin{equation}
  k^-(u(z,w))\leq k(u(z,w))\leq k^+(u(z,w)).
\end{equation}
It follows that
\begin{equation}\label{eq:squeeze}
  K^-(X;z,w)\leq N(X;z,w)\leq K^+(X;z,w),
\end{equation}
where
\begin{equation} \label{defKplusminus}
  K^{\pm}(X;z,w):=\sum_{\g\in \G}k^{\pm}(u(z,\g w)).
\end{equation}
By \eqref{eq:convolution} we find that the Selberg--Harish-Chandra transforms of $k^{\pm}$ are given by
\begin{equation}
  h^{\pm}(t)=\frac{h_{Y\pm \delta}(t) h_\delta (t)}{4\pi\sinh^2(\delta/2)}.
\end{equation}
We have already analyzed $h_R$ for $R$ large. For $R=\delta$ small we analyze $h_\delta$ as in \cite[Lemma 2.4]{Chamizo:1996a}, \cite{PetridisRisager:2017} and using \eqref{eq:hRbd} we find that in the strip $\abs{\Im(t)}\leq 1-\e$,
\begin{equation} \label{eq:hpmdef}
  h^{\pm}(t)=O_{X,\delta}\left(\frac{1}{(1+\abs{t})^3}\right).
\end{equation}
This justifies that we can use the pre-trace formula. Finally, as in \cite[Equations (6.2)-(6.4)]{PetridisRisager:2018a} we find that
\begin{equation}\label{good-bounds-for-h}
  \begin{split}
    h^{\pm} \left(\tfrac{i}{2}\right) & = \pi X+O(1+\delta X),                                               \\
    h^{\pm}(t)                        & = O(X^{1/2}\min(\abs{t}^{-3/2}, \delta^{-3/2}\abs{t}^{-3}, \log X)).
  \end{split}
\end{equation}
We can now proceed to the proof of the Proposition.
\begin{proof}[Proof of Proposition \ref{lem:local-bound-to-counting-bound}]
  By Selberg's pre-trace formula \cite[Theorem~7.4]{Iwaniec:2002} and the definition of $K^{\pm}$ in \eqref{defKplusminus} we have
  \begin{equation}
    K^\pm(X;z,w)=2\sum_{t_j}h^{\pm}(t_j)\phi_j(z)\overline{\phi_j(w)}
    +
    \frac{2}{4\pi}\int_{\R} h^\pm(t)E \left(z,\tfrac{1}{2}+it \right)\overline{E \left(w,\tfrac{1}{2}+it \right)} dt.
  \end{equation}
  We note that the factor of 2 on the right-hand side comes from the fact that we count group elements rather than M\"obius transformations, i.e.~we count both $\pm \gamma$. We assume that  $\delta X\geq 1$.

  For $\slz$ the only small eigenvalue $\lambda\leq 1/4$ is the trivial eigenvalue $\lambda=0$ with constant eigenfunction $\phi_0(z)=\vol{\GmodH}^{-1/2}$. By \eqref{good-bounds-for-h} this gives a contribution to $K^\pm(X;z,w)$ of
  \begin{equation}
    \frac{2\pi X}{\vol{\GmodH}}+ O(\delta X).
  \end{equation}
  We split the rest of the spectrum in three parts: small i.e.  $\abs{t}, \abs{t_j}\leq T_0$; medium i.e. $T_0<\abs{t}, \abs{t_j}\leq \delta^{-1}$; and large i.e. $\delta^{-1}\leq \abs{t}, \abs{t_j}$. 

  For the small part of the spectrum we use $h^{\pm}(t)=O(X^{1/2}\log X)$ from
  \eqref{good-bounds-for-h} to find that the contribution to $K^\pm(X;z,w)$ is $\ll X^{1/2}\log X$. For the medium part we use the second bound in \eqref{good-bounds-for-h}, in particular
  \begin{equation}
    h^{\pm}(t)=O(X^{1/2}\abs{t}^{-3/2}).
  \end{equation}
  We find that the contribution from this part of the spectrum can be bounded by a constant times
  \begin{equation}
    X^{1/2}\left(\sum_{T_0<t_j\leq \frac{1}{\delta}}
    \frac{\abs{\phi_j(z)\phi_j(w)}}{t_j^{3/2}}
    +
    \int_{T_0}^{ \frac{1}{\delta}}
    \frac{|E(z,\tfrac{1}{2}+it)E(w,\tfrac{1}{2}+it)|}{t^{3/2}}
    dt\right).
  \end{equation}
  Using a dyadic decomposition, the contribution from the discrete spectrum is bounded by a constant times
  \begin{eqnarray}
    X^{1/2}\sum_{\log_2(T_0)< n\leq \log_2(1/\delta)}\frac{1}{2^{3n/2}}\sum_{2^n<t_j\leq  2^{n+1}}
    |\phi_j(z)\phi_j(w)|.
  \end{eqnarray}
  Using our assumption, this is bounded by
  \begin{eqnarray}
    X^{1/2} \sum_{\log_2(T_0)<n\leq \log_2(1/\delta)}\frac{1}{2^{3n/2}}\frac{2^{2n}}{n^\eta}\ll X^{1/2}\frac{\delta^{-1/2}}{(\log(1/\delta) )^{\eta}}.
  \end{eqnarray}
  Following Remark \ref{remark5.2} and using \eqref{eq:supnormbd}, the contribution from the continuous spectrum is bounded by $X^{1/2} \delta^{-1/6 +\varepsilon}$.

  For the large part of the spectrum we use the second estimate from \eqref{good-bounds-for-h}, i.e. $h^{\pm}(t) = O (X^{1/2} \delta^{-3/2} \abs{t}^{-3})$, to find that the contribution from the discrete spectrum is bounded by a constant times
  \begin{eqnarray}
    && X^{1/2} \sum_{\frac{1}{\delta}<t_j}\frac{1}{\delta^{3/2}t_j^{3}}\abs{\phi_j(z)\phi_j(w)}
    \ll X^{1/2} \sum_{n=0}^{\infty}\frac{\delta^{3/2} }{2^{3n}}\sum_{2^n\frac{1}{\delta}<t_j\leq 2^{n+1}\frac{1}{\delta}}
    \abs{\phi_j(z)\phi_j(w)},
  \end{eqnarray}
  which is bounded by a constant times
  \begin{eqnarray}
    X^{1/2}\delta^{3/2}\sum_{n=0}^{\infty}\frac{1}{2^{3n}}\frac{2^{2n}\delta^{-2}}{(\log(1/\delta) )^{\eta}}\ll X^{1/2}\frac{\delta^{-1/2}}{(\log(1/\delta) )^{\eta}}.
  \end{eqnarray}
 The contribution from the continuous spectrum is again bounded by $X^{1/2} \delta^{-1/6 +\varepsilon}$. Adding all the contributions we find that
  \begin{equation}
    K^\pm(X;z,w)=\frac{2\pi X}{\vol{\GmodH}}
    +
    O\left(X\delta +
    \frac{X^{1/2}}{\delta^{1/2}(\log(1/\delta) )^{\eta}}
    + X^{1/2}\log X+X^{1/2} \delta^{-1/6 +\varepsilon}\right),
  \end{equation} and by using \eqref{eq:squeeze} we get
  \begin{equation}
    N(X;z,w)=\frac{2\pi X}{\vol{\GmodH}}
    +
    O\left(X\delta +
    \frac{X^{1/2}}{\delta^{1/2}(\log(1/\delta) )^{\eta}}
    + X^{1/2}\log X+X^{1/2} \delta^{-1/6 +\varepsilon}\right).
  \end{equation}
  Choosing $\delta^{-1}=X^{1/3}(\log X)^{2\eta/3}$ gives the result.
\end{proof}

\begin{proof}[Proof of Theorem \ref{thm:mainresult}]
  Given negative, squarefree integers $d,d'$ with $d,d' \equiv 1 \pmod 4$,
  let $K=\mathbb Q(\sqrt{d})$ and $K'=\mathbb Q(\sqrt{d'})$.
  Applying \eqref{eq:phibd} and then using Theorem \ref{thm:fracmoments} we have for $T \ge 1$
  \begin{equation}\label{0605:eq002}
    \begin{split}
      \sum_{T < t_j \le 2T} \!\!\! |\phi_j(z_{d})\phi_j(z_{d'})|
       & \ll \!\!\!\!
      \sum_{\substack{\ck \in \widehat{\Cl}_{K} \\\;\ck' \in  \widehat{\Cl}_{K'}}} \!\!\!
      \sum_{\;T < t_j \le 2T} \!\!\!\!\!
      \frac{\sqrt{L(\tfrac12, \phi_j \times f_{\ck})L(\tfrac12,\phi_j \times f_{\ck'})}}{L(1,\tmop{sym}^2 \phi_j)}
      \ll
      \frac{T^2}{(\log T)^{1/4}}.
    \end{split}
  \end{equation}
  This shows that the assumption of Proposition \ref{lem:local-bound-to-counting-bound} is satisfied for $\eta=1/4$
  and the result immediately follows.
\end{proof}

\section{The second moment of the error term} \label{secondmomenterror}

Let us now move to discuss the proof of Corollary \ref{corollary1.5}.
As in the proof of the pointwise bound in the previous section, our starting point is an approximation of the counting function $N(X;z,w)$
from above and below by two point-pair invariants $K^\pm(X;z,w)$, see \eqref{eq:squeeze},
constructed to have better regularity and decay properties.
These functions are defined in terms of a small parameter $0<\delta\ll 1$
which will be optimized later. We anticipate that we will take $\delta\geq X^{-2/3}$.

Let $f$ be a smooth function compactly supported on $[1/2,5/2]$ with
$f(y)\geq 1$ when $y\in [1,2]$. Let $X\gg 1$.
Subtracting the main term in \eqref{eq:squeeze} and integrating over $x\in[X,2X]$ shows that
\begin{equation}\label{0605:eq003}
  \frac{1}{X}\int_{X}^{2X} \biggl|N(x;z,w)-\frac{2\pi x}{\vol{\GmodH}}\biggr|^{2} \! dx
  \ll
  \max_{\pm}\frac{1}{X} \int_\RR \biggl|K^\pm(x;z,w)-\frac{2\pi x}{\vol{\GmodH}}\biggr|^{2} \! f\left(\frac{x}{X}\right) \! dx.
\end{equation}
We will show that the right-hand side above is $O(X(\log X)^{3/4})$
when $z = z_d, w=z_{d'}$ are Heegner points of distinct discriminants as in the statement of the corollary.

We use the pre-trace formula to  write an absolutely convergent expansion for $K^\pm(x;z,w)$ as defined in \eqref{defKplusminus}
in terms of cusp forms and Eisenstein series. Recall that by \eqref{good-bounds-for-h} the contribution of the 
eigenvalue $\lambda=0$ 
is $ \frac{2\pi x}{\vol{\GmodH}}+ O(\delta x)$ for $1/2 \le x/X \le 5/2$ since $\delta \ge X^{-2/3}$.
Squaring and separating the discrete part and the continuous part, we deduce that
\begin{equation}\label{0605:eq004}
  \begin{split}
    \frac{1}{X} & \int_{\mathbb{R}} \left| K^{\pm} (x;z, w)  - \frac{2 \pi x}{\vol{\GmodH}}\right|^2 f\left(\frac{x}{X}\right)dx
    \\
                & \quad\ll
    \frac{1}{X} \int_{\mathbb{R}}
    \left|
    \sum_{t_j \in \mathbb{R}}h^{\pm}(t_j)\phi_j(z)\overline{\phi_j(w)}
    \right|^2 f\left(\frac{x}{X}\right)dx
    \\
                & \phantom{xxxxxxx}+
    \frac{1}{X} \int_{\mathbb{R}}
    \left|\int_{\mathbb{R}} h^\pm(t)E \left(z,\tfrac{1}{2}+it \right)\overline{E \left(w,\tfrac{1}{2}+it \right)} dt \right|^2
    f\left(\frac{x}{X}\right)
    dx + \delta^2 X^2,
  \end{split}
\end{equation}
where $h^{\pm}$ is as defined in \eqref{eq:hpmdef}.
Before performing the integration in $x$, we are going to simplify the function $h^\pm(t)$.
Notice that $h^{\pm}$ is even, so replacing $t\mapsto -t$ in the $t$-integral amounts to taking the complex conjugate
and therefore it suffices to estimate the contribution of $t>0$.
Recall from Section \ref{finalproof}
that we write $Y=\arccosh(x/2)$ and let $h_R(t)$ be as in \eqref{eq:hRdef}.
Rather than bounding in absolute value,
following \cite[Lemma 2.4 (a)]{Chamizo:1996a} we write, for $t>0$,
\begin{equation}\label{eq:h_Y-exp}
  h_{Y\pm\delta}(t) = \frac{2\sqrt{2\pi\sinh (Y\pm \delta)}}{t^{3/2}} \cos(t(Y\pm \delta)-\tfrac{3\pi}{4}) + O(t^{-5/2}x^{1/2}).
\end{equation}
By the Taylor expansions of cosine, $\sinh$ and $\arccosh$ we have
\[
  \begin{gathered}
    \sqrt{2\sinh (Y\pm \delta)} = x^{1/2} e^{\pm\delta/2} + O(x^{-3/2})\\
    \cos(t(Y\pm\delta)-\tfrac{3\pi}{4}) = \cos(t(\pm\delta + \log x) - \tfrac{3\pi}{4}) + O(\min(1,tx^{-2})).
  \end{gathered}
\]
For later use, we bound
$\min(1,tx^{-2})\leq t^{1/2}x^{-1}$.
If we set
\begin{equation}\label{def:HX}
  H_x(t) = \frac{2\sqrt{\pi}e^{\pm\delta/2}}{t^{3/2}} \Re(x^{\frac 12+it}e^{\pm\delta ti-\frac{3\pi i}{4}}),
\end{equation}
we obtain
\[
  h_{Y\pm\delta}(t) = H_x(t) + O(t^{-3/2}(1+t^{1/2})x^{-1/2} + t^{-5/2}x^{1/2}),
\]
where the last error comes from \eqref{eq:h_Y-exp}.
Multiplying by $h_\delta(t)(4\pi \sinh^2(\delta/2))^{-1}$
and noting that
\begin{equation}\label{eq:hdeltabd}
h_\delta(t)(4\pi \sinh^2(\delta/2))^{-1}\ll\min(1,(\delta t)^{-3/2})\ll(1+\delta t)^{-3/2},
\end{equation} (compare \cite[Equations (6.2)-(6.4)]{PetridisRisager:2018a}, see also \cite[Equation (4.20)]{Cherubini:2018})
we arrive at the expression
\[
  h^\pm(t) = \frac{H_x(t)h_\delta(t)}{4\pi \sinh^2(\delta/2)} + R_x(t),\quad R_x(t)=O\left(\frac{(1+t^{1/2})x^{-1/2}}{t^{3/2}(1+\delta t)^{3/2}}+\frac{x^{1/2}}{t^{5/2}}\right).
\]

Summing the error term above against the cusp forms or integrating against the Eisenstein series
gives a negligible contribution. Indeed, we have
\[
  \sum_{t_j>0} R_x(t) \abs{\phi_j(z)\phi_j(w)}
  \ll
  (x^{-1/2}\delta^{-3/2} + x^{1/2})
  \sum_{t_j>0} \frac{|\phi_j(z)\phi_j(w)|}{t_j^{\frac 52}}
  \ll
  x^{1/2},
\]
since the sum converges and $\delta$ will be chosen such that $\delta \gg x^{-2/3}$.
Similarly for the continuous spectrum, we can bound the product of the two Eisenstein series 
by $O_{z,w}((1+t)^{2/3+\varepsilon})$ by \eqref{eq:supnormbd}.
Notice that near zero we also have $E(z,\frac 12+it)=O_z(t)$ for all points $z$ since $E(z,\frac 12)$
vanishes identically
and the function is regular. Therefore, we have
\[
  \int_{0}^\infty R_x(t) E(z,\tfrac 12+it)\overline{E(w,\tfrac 12+it)} dt
  \ll
  (x^{-1/2}\delta^{-3/2} + x^{1/2})
  \int_{0}^\infty \frac{1}{t^{\frac 12}(1+t)^{\frac{4}{3}-\varepsilon}} dt \ll x^{1/2},
\]
since again the integral converges and $\delta \gg x^{-2/3}$.
Summarizing, so far we showed that
\begin{equation}\label{0605:eq001}
  \begin{split}
    \frac{1}{X} \int_\RR & \biggl|K^\pm(x;z,w)-\frac{2\pi x}{\vol{\GmodH}}\biggr|^{2} f\left(\frac{x}{X}\right) dx
    \\
                         & \ll
    \delta^2X^2+X+
    \frac{1}{X} \int_\RR \biggl|\sum_{t_j>0} \frac{H_x(t_j)h_\delta(t_j)}{4\pi \sinh^2(\delta/2)} \phi_j(z)\overline{\phi_j(w)}\biggr|^{2} f\left(\frac{x}{X}\right) dx
    \\
                         & \phantom{xxxxxxxxx}+
    \frac{1}{X} \int_\RR \biggl|\int_0^\infty \frac{H_x(t)h_\delta(t)}{4\pi \sinh^2(\delta/2)} E(z,\tfrac{1}{2}+it)\overline{E(w,\tfrac{1}{2}+it)} \,dt\biggr|^{2} f\left(\frac{x}{X}\right) dx.
  \end{split}
\end{equation}
We open the squares and focus on the $x$-integral.
This involves the function $f(x/X)$ and two copies of $H_x(t)$.

\begin{lemma}\label{0405:lemma1}
  Let $X\gg 1$, $H_x(t)$ be as in \eqref{def:HX}, $f$ be as above and let also $t_1,t_2>0$. Then for every $N\geq 0$ we have
  \[
    \frac{1}{X} \int_\mathbb{R} H_x(t_1)\overline{H_x(t_2)} f\left(\frac{x}{X}\right) dx
    \ll_{f,N}
    \frac{X}{(t_1t_2)^{3/2}(1+|t_1-t_2|)^N}.
  \]
\end{lemma}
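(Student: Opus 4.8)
The plan is to expand the product $H_x(t_1)\overline{H_x(t_2)}$ into a handful of pure oscillatory monomials in $x$ and then estimate the resulting $x$-integrals by repeated integration by parts.

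\textbf{Linearising the product.} First I would note that, since $t_1,t_2>0$, the prefactor $2\sqrt{\pi}\,e^{\pm\delta/2}t^{-3/2}$ in \eqref{def:HX} is real, so $H_x(t)$ is real-valued and $\overline{H_x(t_2)}=H_x(t_2)$. Writing $\Re(w)=\tfrac12(w+\overline w)$ gives
\[
  H_x(t)=\frac{\sqrt{\pi}\,e^{\pm\delta/2}}{t^{3/2}}\Bigl(e^{i(\pm\delta t-3\pi/4)}x^{\frac12+it}+e^{-i(\pm\delta t-3\pi/4)}x^{\frac12-it}\Bigr),
\]
so that multiplying out $H_x(t_1)H_x(t_2)$ produces four terms, each of the form $c\,x^{1+i\tau}$ with $c$ independent of $x$, $|c|\ll(t_1t_2)^{-3/2}$ (using $0<\delta\ll1$, hence $e^{\pm\delta/2}\ll1$ and $|e^{\pm\delta t i}|=1$), and frequency $\tau\in\{\,t_1+t_2,\ t_1-t_2,\ -(t_1-t_2),\ -(t_1+t_2)\,\}$.

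\textbf{The $x$-integral.} For each such term I would substitute $x=Xy$, reducing matters to
\[
  \frac1X\int_\RR c\,x^{1+i\tau}f\Bigl(\tfrac xX\Bigr)\,dx=c\,X^{1+i\tau}\int_\RR y^{1+i\tau}f(y)\,dy.
\]
Setting $g(y)=yf(y)\in C_c^\infty([\tfrac12,\tfrac52])$ and writing $y^{i\tau}=e^{i\tau\log y}$, the identity $e^{i\tau\log y}=(i\tau)^{-1}y\,\tfrac{d}{dy}e^{i\tau\log y}$ allows $N$ integrations by parts (with no boundary terms, as $g$ is compactly supported), giving
\[
  \int_\RR y^{1+i\tau}f(y)\,dy=\frac{(-1)^N}{(i\tau)^N}\int_\RR (\mathcal D^N g)(y)\,e^{i\tau\log y}\,dy,
\]
where $\mathcal D h=-\tfrac{d}{dy}(yh)$ preserves $C_c^\infty([\tfrac12,\tfrac52])$. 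Hence this integral is $\ll_{f,N}|\tau|^{-N}$ for $|\tau|\ge1$, and trivially $\ll_f 1$, so in all cases it is $\ll_{f,N}(1+|\tau|)^{-N}$.

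\textbf{Conclusion.} Combining the two steps, each of the four terms contributes $\ll_{f,N}X(t_1t_2)^{-3/2}(1+|\tau|)^{-N}$. For the two terms with $\tau=\pm(t_1-t_2)$ this is precisely the claimed bound; for the two with $\tau=\pm(t_1+t_2)$, I would use that $t_1,t_2>0$ forces $t_1+t_2\ge|t_1-t_2|$, whence $(1+|t_1+t_2|)^{-N}\le(1+|t_1-t_2|)^{-N}$, so these are dominated by the same quantity. Summing the four contributions gives the lemma. I do not expect any genuine obstacle here: the only things to be careful about are that $H_x$ is truly real (so conjugation costs nothing), that the $\delta$-dependent exponentials are uniformly bounded for $0<\delta\ll1$, and that the off-diagonal frequency $t_1+t_2$ is at least $|t_1-t_2|$, so that it too is controlled by the stated right-hand side.
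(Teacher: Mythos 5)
Your proof is correct and follows essentially the same route as the paper's (very terse) argument: expand $H_x(t_1)\overline{H_x(t_2)}$ into the four oscillatory terms $x^{1\pm i(t_1\pm t_2)}$ and integrate by parts $N$ times against $f$, noting that the sum-frequency terms are dominated by the difference-frequency bound since $t_1+t_2\geq |t_1-t_2|$. The only (harmless) quibble is a stray $(-1)^N$ in your integration-by-parts identity, which does not affect the estimate.
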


\begin{proof}
  This follows by integrating by parts $N$ times the term $x^{1\pm i(t_1\pm t_2)}$
  coming from the definition of $H_x$ and differentiating the function $f$.
\end{proof}

With Lemma \ref{0405:lemma1} at hand we can bound
the discrete and the continuous contribution appearing in \eqref{0605:eq001}.
We do this separately, starting with the Eisenstein series.

\begin{lemma}\label{0605:lemma2}
  Let $X \gg 1$, $f$ be as above, $0<\delta\ll 1$, $H_x$ be as in \eqref{def:HX} and let $z,w$ be any two points in the hyperbolic plane. We have
  \[
    \frac{1}{X} \int_\RR \biggl|\int_0^\infty \frac{H_x(t)h_\delta(t)}{4\pi \sinh^2(\delta/2)} E(z,\tfrac{1}{2}+it)\overline{E(w,\tfrac{1}{2}+it)} \,dt\biggr|^{2} f\left(\frac{x}{X}\right) dx
    \ll_{z,w} X.
  \]
\end{lemma}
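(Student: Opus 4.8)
The plan is to expand the modulus squared into a double integral over the spectral parameters, carry out the $x$-integration first by means of Lemma~\ref{0405:lemma1}, and then reduce the resulting double integral to a single convergent one.

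Write $\Phi_x(t)=\frac{H_x(t)h_\delta(t)}{4\pi\sinh^2(\delta/2)}E(z,\tfrac12+it)\overline{E(w,\tfrac12+it)}$, so that $\bigl|\int_0^\infty\Phi_x(t)\,dt\bigr|^2=\int_0^\infty\int_0^\infty\Phi_x(t_1)\overline{\Phi_x(t_2)}\,dt_1\,dt_2$. First I would justify interchanging the $x$-integral with the $(t_1,t_2)$-integral for fixed $X$ and $\delta$: using $|H_x(t)|\ll x^{1/2}t^{-3/2}$, the bound $|h_\delta(t)|\ll 4\pi\sinh^2(\delta/2)\,(1+\delta t)^{-3/2}$ from \eqref{eq:hdeltabd}, the sup-norm bound \eqref{eq:supnormbd}, and the vanishing of $E(z,\tfrac12)$ (which gives $E(z,\tfrac12+it)\ll_z t$ near $t=0$, hence $E(z,\tfrac12+it)\ll_z\min(t,(1+|t|)^{1/3+\varepsilon})$ overall), one checks that the triple integral of the absolute value of the integrand is finite, since $f$ is compactly supported; Fubini then applies.

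Next I would invoke Lemma~\ref{0405:lemma1}, which gives for any $N\ge 2$ that $\frac1X\int_\RR H_x(t_1)\overline{H_x(t_2)}f(x/X)\,dx\ll_{f,N}X(t_1t_2)^{-3/2}(1+|t_1-t_2|)^{-N}$. Pulling the $x$-independent factors out of the $x$-integral, bounding $|h_\delta(t)|/(4\pi\sinh^2(\delta/2))\le 1$ — discarding the $\delta$-decay, which is not needed here and whose omission is what keeps the estimate uniform in $\delta$ — and using $|E(z,\tfrac12+it)E(w,\tfrac12+it)|\ll_{z,w}\min(t^2,(1+|t|)^{2/3+2\varepsilon})$, the left-hand side of the lemma is
\[
\ll_{z,w}\; X\int_0^\infty\!\!\int_0^\infty\frac{\psi(t_1)\psi(t_2)}{(1+|t_1-t_2|)^N}\,dt_1\,dt_2,\qquad \psi(t):=\min\!\bigl(t^{1/2},(1+t)^{-5/6+2\varepsilon}\bigr).
\]
Finally, applying the elementary inequality $2\psi(t_1)\psi(t_2)\le\psi(t_1)^2+\psi(t_2)^2$ and integrating out one variable against $(1+|t_1-t_2|)^{-N}$ bounds this by $X\int_0^\infty\psi(t)^2\,dt$, and $\int_0^\infty\psi(t)^2\,dt\ll\int_0^1 t\,dt+\int_1^\infty t^{-5/3+4\varepsilon}\,dt<\infty$ provided $\varepsilon<\tfrac16$, which we may assume. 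This yields the claimed bound $\ll_{z,w}X$.

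The main obstacle is the convergence of the spectral integrals, uniformly in $\delta$. A naive bound on $\int_0^\infty\psi(t)\,dt$ diverges at infinity, because the sup-norm bound \eqref{eq:supnormbd} only gains $(1+|t|)^{1/3+\varepsilon}$ per Eisenstein factor and so $\psi(t)\ll t^{-5/6+2\varepsilon}$ for large $t$. The point is that Lemma~\ref{0405:lemma1} forces $|t_1-t_2|\ll 1$, and combined with $2ab\le a^2+b^2$ this converts the borderline-divergent $\int\psi$ into the convergent $\int\psi^2$. A secondary point is the behaviour near $t=0$, where one must exploit that $E(z,\tfrac12)\equiv 0$, so that $\psi(t)\ll t^{1/2}$ rather than the $t^{-3/2}$ that the trivial bound $|E|\ll 1$ would give.
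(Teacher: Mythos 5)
Your argument is correct and follows essentially the same route as the paper: open the square, interchange integrals, apply Lemma~\ref{0405:lemma1}, bound $h_\delta(t)(4\pi\sinh^2(\delta/2))^{-1}=O(1)$ via \eqref{eq:hdeltabd}, and use the sup-norm bound \eqref{eq:supnormbd} together with the vanishing of $E(\cdot,\tfrac12)$ near $t=0$. The only (harmless) difference is in the final bookkeeping: the paper takes $N=1$ and bounds the resulting double integral directly, whereas you take $N\ge 2$ and use $2\psi(t_1)\psi(t_2)\le\psi(t_1)^2+\psi(t_2)^2$ to reduce to a single convergent integral — both work.
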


\begin{proof}
  Let $J$ denote the integral in the statement.
  Open the square and interchange the integrals (everything converges absolutely).
  Applying Lemma \ref{0405:lemma1} to the $x$-integral with $N=1$,
  bounding $h_\delta(t_1)h_\delta(t_2)(4\pi \sinh^2(\delta/2))^{-2}$
  by $O(1)$ using \eqref{eq:hdeltabd},
  and recalling the bound for Eisenstein series given by \eqref{eq:supnormbd}
  as well as the observation we made earlier about the behaviour near zero,
  we obtain
  \[
    J\ll_{z,w,\varepsilon} X \int_{0}^\infty\int_{0}^\infty \frac{dt_1dt_2 }{(1+t_1)^{\frac 56-\varepsilon}(1+t_2)^{\frac 56-\varepsilon}(1+|t_1-t_2|)} \ll_{z,w} X.
    \qedhere
  \]
\end{proof}

Finally, we look at the contribution of the discrete spectrum
and we restrict to Heegner points.

\begin{lemma}\label{0605:lemma3}
  Let $X \gg 1$, $f$ be as above, $0<\delta\ll 1$, $H_x$ be as in \eqref{def:HX} and let $z_d,z_{d'}$ be Heegner points 
  of different squarefree discriminants $d,d'<0$ respectively.
  We have
  \[
    \frac{1}{X} \int_\RR \biggl|\sum_{t_j>0} \frac{H_x(t_j)h_\delta(t_j)}{4\pi \sinh^2(\delta/2)} \phi_j(z_d)\overline{\phi_j(z_{d'})}\biggr|^{2} f\left(\frac{x}{X}\right) dx
    \ll_{z_d,z_{d'}}
    X  \left(\log \frac{1}{\delta}\right)^{3/4}.
  \]
\end{lemma}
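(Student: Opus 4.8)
The plan is to open the square in the statement, carry out the $x$-integration by means of Lemma \ref{0405:lemma1}, and then exploit the arithmetic saving of Theorem \ref{thm:fracmoments} on the \emph{outer} spectral sum while collapsing the \emph{inner} one by the local Weyl law. Write $\psi_j=\phi_j(z_d)\overline{\phi_j(z_{d'})}$ and $\omega_j=h_\delta(t_j)\bigl(4\pi\sinh^2(\delta/2)\bigr)^{-1}t_j^{-3/2}$, so that $\omega_j\ll t_j^{-3/2}(1+\delta t_j)^{-3/2}$ by \eqref{eq:hdeltabd} and $|H_x(t)|\ll x^{1/2}t^{-3/2}$ by \eqref{def:HX}; in particular the spectral sum converges absolutely for a fixed small $\delta$, so I would open the square and interchange the summation with the $x$-integral. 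Applying Lemma \ref{0405:lemma1} with $N$ large to each resulting integral and using \eqref{eq:hdeltabd} once more then bounds the left-hand side of the lemma by
\[
  X\sum_{t_i,t_j>0}\frac{\omega_i\,\omega_j\,|\psi_i|\,|\psi_j|}{(1+|t_i-t_j|)^{N}}.
\]
We may and do assume $\delta$ is sufficiently small, as otherwise the claimed bound is trivial.

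The crux is how to treat this double sum. Symmetrising it by Cauchy--Schwarz and then collapsing \emph{both} inner windows via the local Weyl law recovers only the bound $\ll X\log(1/\delta)$ of \cite{Cherubini:2018}; instead I would sum over $t_i$ first for each fixed $j$, keeping the product $\psi_j$ intact. For $t_j\ge 1$ one splits the $t_i$-sum according to whether $|t_i-t_j|\le 2$. On the near-diagonal range $\omega_i\asymp t_j^{-3/2}(1+\delta t_j)^{-3/2}$, and by Cauchy--Schwarz followed by the local Weyl law (\cite{Hormander:1968}, \cite[Theorem 7.2]{Iwaniec:2002}),
\[
  \sum_{|t_i-t_j|\le 2}|\psi_i|\le\Bigl(\sum_{|t_i-t_j|\le 2}|\phi_i(z_d)|^2\Bigr)^{1/2}\Bigl(\sum_{|t_i-t_j|\le 2}|\phi_i(z_{d'})|^2\Bigr)^{1/2}\ll_{z_d,z_{d'}}t_j.
\]
The complementary range $|t_i-t_j|>2$ is controlled by the rapid decay $(1+|t_i-t_j|)^{-N}$, a dyadic decomposition of $|t_i-t_j|$ and the same local Weyl estimate, and contributes at most the same order. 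This gives $\sum_{t_i>0}\omega_i|\psi_i|(1+|t_i-t_j|)^{-N}\ll t_j^{-1/2}(1+\delta t_j)^{-3/2}$ for $t_j\ge 1$, while the finitely many $t_j<1$ contribute $O(X)$ overall, so that the left-hand side of the lemma is
\[
  \ll X+X\sum_{t_j>0}\frac{|\phi_j(z_d)\phi_j(z_{d'})|}{t_j^{2}(1+\delta t_j)^{3}}.
\]

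Finally I would estimate the last sum by partial summation. By \eqref{eq:phibd}, Theorem \ref{thm:fracmoments} and dyadic summation, exactly as in \eqref{0605:eq002}, one has $A(T):=\sum_{t_j\le T}|\phi_j(z_d)\phi_j(z_{d'})|\ll T^{2}(\log T)^{-1/4}$, and hence
\[
  \sum_{t_j>0}\frac{|\phi_j(z_d)\phi_j(z_{d'})|}{t_j^{2}(1+\delta t_j)^{3}}\ll 1+\int_{2}^{1/\delta}\frac{A(T)}{T^{3}}\,dT+\int_{1/\delta}^{\infty}\frac{A(T)}{\delta^{3}T^{6}}\,dT\ll(\log(1/\delta))^{3/4},
\]
where the first integral contributes $\asymp\int_2^{1/\delta}dT/(T(\log T)^{1/4})\asymp(\log(1/\delta))^{3/4}$ and the second is $O((\log(1/\delta))^{-1/4})$; this proves the lemma. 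It is essential here that $A(T)$ saves a factor $(\log T)^{1/4}$ over the trivial bound $A(T)\ll T^2$, since the latter yields only $\log(1/\delta)$, matching \cite{Chamizo:1996a, Cherubini:2018}. The main obstacle — and the only genuinely new point — is precisely this asymmetric bookkeeping: one must resist fully symmetrising the double sum and instead collapse only the short inner window via the local Weyl law, so that the product $\phi_j(z_d)\overline{\phi_j(z_{d'})}$ survives in the outer sum and Theorem \ref{thm:fracmoments} can be brought to bear there. Verifying the absolute convergence, the tail bound for $|t_i-t_j|>2$, and the partial summation are all routine.
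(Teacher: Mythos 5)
Your proposal is correct and follows essentially the same route as the paper: open the square, integrate in $x$ via Lemma \ref{0405:lemma1}, collapse the inner spectral sum over unit windows with Cauchy--Schwarz and the local Weyl law to gain $t_j^{-1/2}$, and then feed the surviving single sum $\sum_j |\phi_j(z_d)\phi_j(z_{d'})|t_j^{-2}(1+\delta t_j)^{-3}$ into the bound \eqref{0605:eq002} coming from Theorem \ref{thm:fracmoments}. The only differences from the paper's argument are cosmetic (partial summation in place of a dyadic decomposition in the final step, and windows centred at $t_j$ rather than one-sided ones).
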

\begin{proof}
  Let $S$ denote the left-hand side in the statement.
  Opening the square and applying Lemma \ref{0405:lemma1} with $N\geq 2$
  and the estimate \eqref{eq:hdeltabd} we get
  \[
    S \ll \, X \! \sum_{t_j,t_l>0} \frac{a_ja_l}{(1+|t_j-t_l|)^N},
  \]
  where
  \[
    a_j = \frac{|\phi_j(z_d)\phi_j(z_{d'})|}{(t_j(1+\delta t_j))^{3/2}}
  \]
  and similarly for $a_l$.
  By symmetry, it suffices to estimate the sum with $t_l\geq t_j$.
  For a given $t_j$, we can bound
  \begin{equation} \label{boundal}
    \sum_{t_l\geq t_j} \frac{a_l}{(1+|t_l-t_j|)^N}
    \le
    \sum_{n=0}^\infty \frac{1}{(n+1)^N} \sum_{n\leq t_l-t_j\leq n+1} a_l.
  \end{equation}
  By the local Weyl law in unit intervals and the crude estimates $t_l\geq t_j+n\geq t_j$
  and $(1+\delta t_l)\geq 1$, we can bound \eqref{boundal} by 
  $O_{N,z_d,z_{d'}}(t_j^{-1/2})$. This leads to
  \[
    \frac{S}{X} \ll_{N,z_d,z_{d'}} \sum_{t_j>0} \frac{a_j}{t_j^{1/2}} = \sum_{t_j>0} \frac{|\phi_j(z_d)\phi_j(z_{d'})|}{t_j^2(1+\delta t_j)^{3/2}}.
  \]
  When $t_j\geq \delta^{-1}$, by doing a dyadic decomposition
  and applying the Weyl law we can bound
  \[
  \begin{split}
    \sum_{t_j\geq \delta^{-1}} \frac{|\phi_j(z_d)\phi_j(z_{d'})|}{t_j^2(1+\delta t_j)^{3/2}}
   & \ll
    \frac{1}{\delta^{3/2}} \sum_{n=0}^\infty \; \sum_{2^n\leq \delta t_j\leq 2^{n+1}} \frac{|\phi_j(z_{d})\phi_j(z_{d'})|}{t_j^{7/2}} \\
   & \ll_{z_d,z_{d'}}
    \frac{1}{\delta^{3/2}}\sum_{n=0}^\infty \left(\frac{\delta}{2^{n}}\right)^{3/2} \ll_{z_d,z_{d'}} 1.
    \end{split}
  \]
  When $t_j\leq \delta^{-1}$ we use a dyadic decomposition then apply \eqref{0605:eq002}
  and obtain
  \[
    \sum_{t_j\leq \delta^{-1}} \frac{|\phi_j(z_d)\phi_j(z_{d'})|}{t_j^2} \ll_{z_d,z_{d'}} \left(\log\frac{1}{\delta}\right)^{3/4}.
  \]
  Combining the two parts of the series we obtain $S=O_{z_d,z_{d'}}(X (\log\tfrac{1}{\delta})^{3/4})$, as claimed.
\end{proof}

\begin{proof}[Proof of Corollary \ref{corollary1.5}]
  Combining \eqref{0605:eq003}, \eqref{0605:eq004} and \eqref{0605:eq001}
  with Lemma \ref{0605:lemma2} and Lemma \ref{0605:lemma3} we obtain
  \[
    \frac{1}{X}\int_{X}^{2X} \biggl|N(x;z_d, z_{d'})-\frac{2\pi x}{\vol{\GmodH}}\biggr|^{2} dx
    \ll X + \delta^2 X^2 + X(\log\tfrac{1}{\delta})^{3/4}.
  \]
  We take $\delta=X^{-1/2}$ and obtain the claimed bound.
\end{proof}

\bibliographystyle{amsplain}

\end{document}